\batchmode
\makeatletter
\def\input@path{{"C:/Users/Ori Segel/Dropbox/Math/Project/"}}
\makeatother
\documentclass[11pt,oneside]{amsart}
\usepackage[latin9]{inputenc}
\usepackage{color}
\usepackage{amstext}
\usepackage{amsthm}
\usepackage{amssymb}
\usepackage{stackrel}
\usepackage[pdftex,unicode=true,pdfusetitle,
 bookmarks=true,bookmarksnumbered=false,bookmarksopen=false,
 breaklinks=false,pdfborder={0 0 0},pdfborderstyle={},backref=page,colorlinks=true]
 {hyperref}

\makeatletter
%%%%%%%%%%%%%%%%%%%%%%%%%%%%%% Textclass specific LaTeX commands.
\theoremstyle{plain}
\newtheorem{thm}{\protect\theoremname}[section]
\theoremstyle{plain}
\newtheorem{fact}[thm]{\protect\factname}
\theoremstyle{definition}
\newtheorem{defn}[thm]{\protect\definitionname}
\theoremstyle{remark}
\newtheorem{rem}[thm]{\protect\remarkname}
\theoremstyle{definition}
\newtheorem{example}[thm]{\protect\examplename}
\theoremstyle{plain}
\newtheorem{lem}[thm]{\protect\lemmaname}
\theoremstyle{plain}
\newtheorem{cor}[thm]{\protect\corollaryname}
\theoremstyle{plain}
\newtheorem{prop}[thm]{\protect\propositionname}
\theoremstyle{remark}
\newtheorem{claim}[thm]{\protect\claimname}

%%%%%%%%%%%%%%%%%%%%%%%%%%%%%% User specified LaTeX commands.
\usepackage{amsfonts}
\usepackage{nicefrac}
\usepackage{amscd}
\usepackage{a4wide}
\linespread{1.5}
\usepackage{url}

\makeatother

\providecommand{\claimname}{Claim}
\providecommand{\corollaryname}{Corollary}
\providecommand{\definitionname}{Definition}
\providecommand{\examplename}{Example}
\providecommand{\factname}{Fact}
\providecommand{\lemmaname}{Lemma}
\providecommand{\propositionname}{Proposition}
\providecommand{\remarkname}{Remark}
\providecommand{\theoremname}{Theorem}

\begin{document}
\global\long\def\p{\mathbf{p}}%
\global\long\def\q{\mathbf{q}}%
\global\long\def\C{\mathfrak{C}}%
\global\long\def\SS{\mathcal{P}}%
 
\global\long\def\pr{\operatorname{pr}}%
\global\long\def\image{\operatorname{im}}%
\global\long\def\otp{\operatorname{otp}}%
\global\long\def\dec{\operatorname{dec}}%
\global\long\def\suc{\operatorname{suc}}%
\global\long\def\pre{\operatorname{pre}}%
\global\long\def\qe{\operatorname{qf}}%
 
\global\long\def\ind{\operatorname{ind}}%
\global\long\def\Nind{\operatorname{Nind}}%
\global\long\def\lev{\operatorname{lev}}%
\global\long\def\Suc{\operatorname{Suc}}%
\global\long\def\HNind{\operatorname{HNind}}%
\global\long\def\minb{{\lim}}%
\global\long\def\concat{\frown}%
\global\long\def\cl{\operatorname{cl}}%
\global\long\def\tp{\operatorname{tp}}%
\global\long\def\id{\operatorname{id}}%
\global\long\def\cons{\left(\star\right)}%
\global\long\def\qf{\operatorname{qf}}%
\global\long\def\ai{\operatorname{ai}}%
\global\long\def\dtp{\operatorname{dtp}}%
\global\long\def\acl{\operatorname{acl}}%
\global\long\def\nb{\operatorname{nb}}%
\global\long\def\limb{{\lim}}%
\global\long\def\leftexp#1#2{{\vphantom{#2}}^{#1}{#2}}%
\global\long\def\intr{\operatorname{interval}}%
\global\long\def\atom{\emph{at}}%
\global\long\def\I{\mathfrak{I}}%
\global\long\def\uf{\operatorname{uf}}%
\global\long\def\ded{\operatorname{ded}}%
\global\long\def\Ded{\operatorname{Ded}}%
\global\long\def\Df{\operatorname{Df}}%
\global\long\def\Th{\operatorname{Th}}%
\global\long\def\eq{\operatorname{eq}}%
\global\long\def\Aut{\operatorname{Aut}}%
\global\long\def\ac{ac}%
\global\long\def\DfOne{\operatorname{df}_{\operatorname{iso}}}%
\global\long\def\modp#1{\pmod#1}%
\global\long\def\sequence#1#2{\left\langle #1\left|\,#2\right.\right\rangle }%
\global\long\def\set#1#2{\left\{  #1\left|\,#2\right.\right\}  }%
\global\long\def\Diag{\operatorname{Diag}}%
\global\long\def\Nn{\mathbb{N}}%
\global\long\def\mathrela#1{\mathrel{#1}}%
\global\long\def\twiddle{\mathord{\sim}}%
\global\long\def\mathordi#1{\mathord{#1}}%
\global\long\def\Qq{\mathbb{Q}}%
\global\long\def\dense{\operatorname{dense}}%
 
\global\long\def\cof{\operatorname{cof}}%
\global\long\def\tr{\operatorname{tr}}%
\global\long\def\treeexp#1#2{#1^{\left\langle #2\right\rangle _{\tr}}}%
\global\long\def\x{\times}%
\global\long\def\forces{\Vdash}%
\global\long\def\Vv{\mathbb{V}}%
\global\long\def\Uu{\mathbb{U}}%
\global\long\def\tauname{\dot{\tau}}%
\global\long\def\ScottPsi{\Psi}%
\global\long\def\cont{2^{\aleph_{0}}}%
\global\long\def\MA#1{{MA}_{#1}}%
\global\long\def\rank#1#2{R_{#1}\left(#2\right)}%
\global\long\def\cal#1{\mathcal{#1}}%

%% Independence 
\def\Ind#1#2{#1\setbox0=\hbox{$#1x$}\kern\wd0\hbox to 0pt{\hss$#1\mid$\hss} \lower.9\ht0\hbox to 0pt{\hss$#1\smile$\hss}\kern\wd0} 
\def\Notind#1#2{#1\setbox0=\hbox{$#1x$}\kern\wd0\hbox to 0pt{\mathchardef \nn="3236\hss$#1\nn$\kern1.4\wd0\hss}\hbox to 0pt{\hss$#1\mid$\hss}\lower.9\ht0 \hbox to 0pt{\hss$#1\smile$\hss}\kern\wd0} 
\def\nind{\mathop{\mathpalette\Notind{}}} 

\global\long\def\ind{\mathop{\mathpalette\Ind{}}}%
 
\global\long\def\nind{\mathop{\mathpalette\Notind{}}}%
\global\long\def\average#1#2#3{Av_{#3}\left(#1/#2\right)}%
\global\long\def\Ff{\mathfrak{F}}%
\global\long\def\mx#1{Mx_{#1}}%
\global\long\def\maps{\mathfrak{L}}%

\global\long\def\Esat{E_{\mbox{sat}}}%
\global\long\def\Ebnf{E_{\mbox{rep}}}%
\global\long\def\Ecom{E_{\mbox{com}}}%
\global\long\def\BtypesA{S_{\Bb}^{x}\left(A\right)}%
\global\long\def\supp{\operatorname{supp}}%

\global\long\def\init{\trianglelefteq}%
\global\long\def\fini{\trianglerighteq}%
\global\long\def\Bb{\cal B}%
\global\long\def\Rr{\mathbb{R}}%
\global\long\def\ord{\mathbf{ord}}%

\title{Boolean types in dependent theories}
\author{Itay Kaplan, Ori Segel and Saharon Shelah }
\thanks{The first and second author would like to thank the Israel Science
foundation for partial support of this research (Grant nos. 1533/14
and 1254/18). }
\thanks{The third author would like to thank the Israel Science Foundation
grant no: 1838/19 and the European Research Council grant 338821.
No. 1172 on the third author's list of publications.}
\address{Itay Kaplan \\
The Hebrew University of Jerusalem\\
Einstein Institute of Mathematics \\
Edmond J. Safra Campus, Givat Ram\\
Jerusalem 91904, Israel}
\email{kaplan@math.huji.ac.il}
\urladdr{math.huji.ac.il/\textasciitilde kaplan}
\address{Ori Segel \\
The Hebrew University of Jerusalem\\
Einstein Institute of Mathematics \\
Edmond J. Safra Campus, Givat Ram\\
Jerusalem 91904, Israel}
\email{ori.segel@mail.huji.ac.il}
\address{Saharon Shelah\\
The Hebrew University of Jerusalem\\
Einstein Institute of Mathematics \\
Edmond J. Safra Campus, Givat Ram\\
Jerusalem 91904, Israel}
\address{Saharon Shelah \\
Department of Mathematics\\
Hill Center-Busch Campus\\
Rutgers, The State University of New Jersey\\
110 Frelinghuysen Road\\
Piscataway, NJ 08854-8019 USA}
\email{shelah@math.huji.ac.il}
\urladdr{shelah.logic.at}
\subjclass[2010]{03C45, 03C95, 03G05 , 28A60.}
\begin{abstract}
The notion of a complete type can be generalized in a natural manner
to allow assigning a value in an arbitrary Boolean algebra $\mathcal{B}$
to each formula. We show some basic results regarding the effect of
the properties of $\mathcal{B}$ on the behavior of such types, and
show they are particularity well behaved in the case of NIP theories.
In particular, we generalize the third author's result about counting
types, as well as the notion of a smooth type and extending a type
to a smooth one. We then show that Keisler measures are tied to certain
Boolean types and show that some of the results can thus be transferred
to measures --- in particular, giving an alternative proof of the
fact that every measure in a dependent theory can be extended to a
smooth one. We also study the stable case. We consider this paper
as an invitation for more research into the topic of Boolean types.
\end{abstract}

\maketitle

\section{Introduction}

A complete type over a set $A$ in the variable $x$ is a maximal
consistent set of formulas from $L_{x}\left(A\right)$, the set of
formulas in $x$ with parameters from $A$. Of course, one can think
of $L_{x}\left(A\right)$ as a Boolean algebra by identifying formulas
which define the same set in $\C$ (the monster model). Viewed this
way, a type can also be defined as a homomorphism of Boolean algebras
from $L_{x}\left(A\right)$ to the Boolean algebra $2$. The idea
behind this work is to generalize this definition by allowing an arbitrary
Boolean algebra $\cal B$ in the range. We call these homomorphisms
\emph{$\cal B$-types over $A$} (see Definition \ref{def:generalized types}).
Without any assumptions, Boolean types may behave very wildly, but
it turns out that if the ambient theory $T$ is dependent (NIP) then
there are some restrictions on their behavior which gives some credence
to the claim that this is the right context to study such types in
full generality. 

Let us consider an example. Suppose that $T$ is the theory of the
random graph in the language $\left\{ R\right\} $ and that $\cal B$
is any Boolean algebra. Let $M\models T$ be of size $\geq\left|\cal B\right|$
and let $h:M\to\cal B$ be a surjective map. Let $p:L_{x}\left(M\right)\to\cal B$
be the unique homomorphism defined by mapping formulas of the form
$x\mathrela Ra$ to $h\left(a\right)$ and formulas of the form $x=a$
to $0$ (such a homomorphism exists by quantifier elimination and
\cite[Proposition 5.6]{MR991565}). Thus for any $\cal B$ there is
a type whose image has size $\left|\cal B\right|$. 

However, when $T$ is NIP (dependent) this fails. For example, suppose
that $T=DLO$ (the theory of $\left(\Qq,<\right)$), and suppose that
$\cal B$ is a Boolean algebra with c.c.c (the countable chain condition:
if $\sequence{\mathfrak{a}_{i}}{i<\aleph_{1}}$ are non-zero elements
then for some $i,j<\aleph_{1}$, $\mathfrak{a}_{i}\wedge\mathfrak{a}_{j}\neq0$),
for example the algebra of measurable sets up to measure 0 in some
(real) probability space. Let $M$ be any model and let $p:L_{x}\left(M\right)\to\cal B$
be any $\cal B$-type. Suppose that the image of $p$ has size $\left(2^{\aleph_{0}}\right)^{+}$.
By quantifier elimination, the image of $p$ is the algebra generated
by elements of the form $p\left(x<a\right)$ for $a\in M$ (since
$x=a$ is equivalent to $\neg\left(x<a\land a<x\right)$). It follows
that $\left|\set{p\left(x<a\right)}{a\in M}\right|=\left(2^{\aleph_{0}}\right)^{+}$,
so we can find $\sequence{a_{i}}{i<\left(2^{\aleph_{0}}\right)^{+}}$
such that $p\left(x<a_{i}\right)\neq p\left(x<a_{j}\right)$ for $i\neq j$.
By Erd\H{o}s--Rado, we may assume that $\sequence{a_{i}}{i<\omega_{1}}$
is either increasing of decreasing. Assume the former. Then $p\left(x<a_{i}\right)<^{\cal B}p\left(x<a_{j}\right)$
so $p\left(a_{i}\leq x\right)\cdot p\left(x<a_{j}\right)=p\left(a_{i}\leq x<a_{j}\right)\neq0$
for all $i<j<\omega_{1}$ and thus $\set{p\left(a_{i}\leq x<a_{i+1}\right)}{i<\omega_{1}}$
is a set of size $\aleph_{1}$ of nonzero mutually disjoint elements
from $\cal B$, contradiction. This boundedness of the image generalizes
to any NIP theory $T$, see Proposition \ref{prop:image not too big}
below. 

Let us consider another example. In the classical settings, any type
can be realized in an elementary extension. Once a type is realized,
it has a unique extension to any model. Boolean types that have unique
extensions are called \emph{smooth}. Going back to the theory of the
random graph, let us assume that $\cal B$ is the algebra of Borel
subsets of $2^{\kappa}$ up to measure $0$ (with the measure $\mu$
being the product measure, see \cite[254J]{MR2462280}), and assume
that $\left|M\right|\geq\kappa$. Let $h:M\to\kappa$ be surjective.
Let $p:L_{x}\left(M\right)\to\Bb$ be defined by $p\left(x\mathrela Ra\right)=U_{h\left(a\right)}=\set{\eta\in2^{\kappa}}{\eta\left(h\left(a\right)\right)=1}$
so that its measure is $1/2$ (to be precise we should put the class
of this set, but we will ignore this nuance for this discussion) and
$p\left(x=a\right)=0$ for any $a\in M$ (again such a $\Bb$-type
exists). Note that if $N\succ M$ and $q$ is a $\Bb$-type extending
$p$, then $q\left(x=a\right)=0$ for any $a\in N$ since otherwise
it has positive measure, which leads to a contradiction (since for
any conjunction $\varphi$ of atomic formulas or their negations over
$M$ which $a$ realizes satisfies $\mu\left(q\left(\varphi\right)\right)\geq\mu\left(q\left(x=a\right)\right)$
and the left hand side tends to 0 as the number of distinct conditions
in $\varphi$ grows by the choice of $p$). Thus by Sikorski (see
Fact \ref{fact:function-extension}), we have a lot of freedom in
extending $q$ to any $N'\succ N$. Hence no smooth extension of $p$
exists. Again, when $T$ is NIP and the Boolean algebra is nice enough,
every type has a smooth extension, see e.g. Corollary \ref{cor:smooth-extension}
below. 

\subsection{Structure of the paper}

In Section \ref{sec:Boolean-types} we prove all the main results
on Boolean types. In Subsection \ref{subsec:Basic-Definition} we
give the basic definitions. In particular we define two kinds of maps
between Boolean types: those induced by elementary maps (like in the
classical setting) and those induced by embeddings of the Boolean
algebra itself. In Subsection \ref{subsec:Counting-Boolean-types}
we then give bounds to the number of Boolean types up to conjugation
(in particular generalizing a result of the third author \cite[Theorem 5.21]{Sh950})
or just image conjugation. In Subsection \ref{subsec:Smooth-Boolean-types}
we define and discuss smooth Boolean types as well as a stronger notion,
that of a realized Boolean type. We prove that under NIP, if the algebra
is complete, smooth types exist. 

In Section \ref{sec:Relation-to-Keisler} we relate Boolean types
to Keisler measures, i.e., finitely additive measures on definable
sets. In Subsection \ref{subsec:Counting-Keisler-Measures} we apply
the results of Subsection \ref{subsec:Counting-Boolean-types} to
count Keisler measures up to conjugation (we also give a more direct
proof, using the VC-theorem). In Subsection \ref{subsec:Smooth-Keisler-Measures}
we give an alternative proof to the well-known fact that every Keisler
measure extends to a smooth one \cite[Proposition 7.9]{pierrebook},
using the results of Subsection \ref{subsec:Smooth-Boolean-types}.

In Section \ref{sec:Analysis of stable case} we analyze the case
where the theory (or just one formula) is stable, as well as the totally
transcendental case, showing that in this case Boolean types are locally
averages of types (and in the t.t. case this is true for complete
types as well). 

Throughout, let $T$ be a complete first order theory. Most of the
time, we will only deal with dependent $T$. We use standard notations,
e.g., $\C$ is a monster model for $T$. As usual, all sets and models
are subsets or elementary substructures of $\C$ of cardinality $<\left|\C\right|$.

\subsubsection*{Acknowledgments}

We would like to thank Artem Chernikov and David Fremlin for giving
some comments on private communication. In particular we like to thank
Chernikov for pointing out the alternative proof of Proposition \ref{prop:keisler-measures-in-NIP}
and Fremlin for pointing out the proof for Proposition \ref{prop:extending-partial-isomorphisms}.
We would also like to thank the referee for their careful reading
and many comments which greatly helped us to improve the paper.

\section{\label{sec:Boolean-types}Boolean types}

\subsection{\label{subsec:Basic-Definition}Basic Definitions}

In this subsection we define Boolean types, which are the  basic objects
studied in this paper. We also define when two such types are conjugate
to each other. We finish this subsection by briefly discussing the
algebraic properties of Boolean types.

Let us start by recounting some standard notation for Boolean algebras.

Let $\mathcal{B}$ be a Boolean algebra, and denote by $0$ and $1$
the distinguished elements of $\mathcal{B}$ corresponding to $\bot$
and $\top$ for formulas. We denote by $\mathcal{B}^{+}$ the set
of all nonzero elements of $\mathcal{B}$.

Let $\mathfrak{a},\mathfrak{b}\in\mathcal{B}$. We denote by $-\mathfrak{a}$,
$\mathfrak{a}+\mathfrak{b}$ and $\mathfrak{a}\cdot\mathfrak{b}$
the complement, sum and product --- corresponding to $\neg,\vee$
and $\wedge$ for formulas, respectively (for example, $-0=1$). We
also write $\mathfrak{a}-\mathfrak{b}=\mathfrak{a}\cdot\left(-\mathfrak{b}\right)$.
We say $\mathfrak{a},\mathfrak{b}$ are disjoint if $\mathfrak{a}\cdot\mathfrak{b}=0$.
We also write $\left(-1\right)\cdot\mathfrak{a}$ for $-\mathfrak{a}$
and $1\cdot\mathfrak{a}$ for $\mathfrak{a}$. 

Every Boolean algebra has a canonical order relation: $\mathfrak{a}\leq\mathfrak{b}$
iff $\mathfrak{a}\cdot\mathfrak{b}=\mathfrak{a}$ or equivalently
$\mathfrak{a}+\mathfrak{b}=\mathfrak{b}$. This corresponds to $\rightarrow$
for formulas. Recall that $0$ and $1$ are the minimum and maximum
with respect to this order. 

If the supremum of a set $A\subseteq\mathcal{B}$ exists we denote
it by $\sum A$, likewise, if the infimum exists we denote it by $\prod A$.
An algebra is complete if both always exist.

A ($\kappa$-)complete subalgebra of $\mathcal{B}$ is some subalgebra
$\mathcal{B}'\subseteq\mathcal{B}$ such that if $A\subseteq\mathcal{B}'$
(and $\left|A\right|<\kappa$), and if $\sum A$ exists in $\mathcal{B}$,
then $\sum A\in\mathcal{B}'$.

If $\mathfrak{b}\in\mathcal{B}^{+}$, we define the relative algebra
$\mathcal{B}|_{\mathfrak{b}}$ - its universe is $\left\{ \mathfrak{a}\in\mathcal{B}\mid\mathfrak{a}\leq\mathfrak{b}\right\} $,
with $+,\cdot$ and $0$ inherited from $\mathfrak{\mathcal{B}}$,
and with $1$$_{\mathcal{B}|_{\mathfrak{b}}}=\mathfrak{b}$ and $\left(-\mathfrak{a}\right)_{\mathcal{B}|_{\mathfrak{b}}}$
being $\mathfrak{b}-\mathfrak{a}$. Note that $\mathcal{B}|_{\mathfrak{b}}$
is complete if $\mathcal{B}$ is.

There is a natural homomorphism $\pi_{\mathfrak{b}}:\mathcal{B}\rightarrow\mathcal{B}|_{\mathfrak{b}}$
given by $\pi_{\mathfrak{b}}\left(\mathfrak{a}\right)=\mathfrak{a}\cdot\mathfrak{b}$.

Since we will deal with homomorphisms of Bolean algebras, we will
also need the following facts (Sikorski's extension theorem):
\begin{fact}
\label{fact:function-extension} \cite[Theorem 5.5 and Theorem 5.9]{MR991565}
Assume $\mathcal{B}$ is a complete Boolean algebra and $\mathcal{A}$
is any Boolean algebra. Assume $A\subseteq\mathcal{A}$ a subset and
$f:A\rightarrow\mathcal{B}$ is a function.

Then there is a homomorphism $g:\mathcal{A}\rightarrow\mathcal{B}$
extending $f$ iff, for any $\mathfrak{a}_{0},...,\mathfrak{a}_{n-1}\in A$
and $\varepsilon_{0},...,\varepsilon_{n-1}\in\left\{ \pm1\right\} $
such that $\stackrel[i<n]{}{\prod}\varepsilon_{i}\mathfrak{a}_{i}=0$,
we also have $\stackrel[i<n]{}{\prod}\varepsilon_{i}f\left(\mathfrak{a}_{i}\right)=0$.
\end{fact}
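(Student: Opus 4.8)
The plan is to prove the nontrivial (``if'') direction in two stages: first extend $f$ to the subalgebra generated by $A$, and then extend that homomorphism to all of $\mathcal{A}$ one element at a time, invoking completeness of $\mathcal{B}$ only at the second stage. The forward (``only if'') direction is immediate: if $g$ is a homomorphism extending $f$ and $\prod_{i<n}\varepsilon_{i}\mathfrak{a}_{i}=0$, then since $g$ commutes with finite products and complements we get $\prod_{i<n}\varepsilon_{i}f\left(\mathfrak{a}_{i}\right)=\prod_{i<n}\varepsilon_{i}g\left(\mathfrak{a}_{i}\right)=g\left(\prod_{i<n}\varepsilon_{i}\mathfrak{a}_{i}\right)=g\left(0\right)=0$.

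For the converse, first I would extend $f$ to the subalgebra generated by $A$, call it $\mathcal{A}_{0}$. Every element of $\mathcal{A}_{0}$ is a finite sum of finite products $\prod_{i}\varepsilon_{i}\mathfrak{a}_{i}$ with $\mathfrak{a}_{i}\in A$ (disjunctive normal form), so I define $g$ on such a sum by replacing each $\mathfrak{a}_{i}$ by $f\left(\mathfrak{a}_{i}\right)$ and forming the corresponding element of $\mathcal{B}$. The role of the hypothesis is exactly to make this well defined: two normal forms $p,q$ denote the same element of $\mathcal{A}$ iff $p\cdot\left(-q\right)=0$ and $q\cdot\left(-p\right)=0$, and distributing the complements reduces these equalities to a conjunction of statements of the form $\prod_{i}\varepsilon_{i}\mathfrak{a}_{i}=0$. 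The hypothesis then forces the corresponding products in $\mathcal{B}$ to vanish, which is precisely the assertion that the two $\mathcal{B}$-values agree. That the resulting $g$ is a homomorphism on $\mathcal{A}_{0}$ is then a routine verification.

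It remains to extend a homomorphism $h\colon\mathcal{C}\rightarrow\mathcal{B}$ from a subalgebra $\mathcal{C}\supseteq\mathcal{A}_{0}$ to all of $\mathcal{A}$, and here completeness enters. I would apply Zorn's lemma to the poset of homomorphic extensions of $g|_{\mathcal{A}_{0}}$ and take a maximal element $\left(\mathcal{C}^{*},h^{*}\right)$; if $\mathcal{C}^{*}\neq\mathcal{A}$, I pick $c\in\mathcal{A}\setminus\mathcal{C}^{*}$ and extend by one step, contradicting maximality. For the one-step extension I set $\mathfrak{d}=\sum\left\{ h^{*}\left(u\right)\mid u\in\mathcal{C}^{*},\,u\leq c\right\} $, which exists because $\mathcal{B}$ is complete, and declare $h^{*}\left(c\right)=\mathfrak{d}$. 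The crucial verification is the ``sandwiching'' property: for all $u,v\in\mathcal{C}^{*}$ with $u\leq c\leq v$ one has $h^{*}\left(u\right)\leq\mathfrak{d}\leq h^{*}\left(v\right)$. The left inequality is the definition of supremum, and the right holds because any such $u,v$ satisfy $u\leq v$, hence $h^{*}\left(u\right)\leq h^{*}\left(v\right)$, so $h^{*}\left(v\right)$ is an upper bound of the set defining $\mathfrak{d}$. This is exactly the condition needed for the assignment to extend to a homomorphism on the subalgebra generated by $\mathcal{C}^{*}\cup\left\{ c\right\} $, via the one-generator case of the extension criterion. The hard part is this last one-step extension: one must check that the single value $\mathfrak{d}$ is simultaneously compatible with every relation $c$ bears to $\mathcal{C}^{*}$, and it is precisely the completeness of $\mathcal{B}$ that guarantees the supremum $\mathfrak{d}$ exists — without it the statement fails.
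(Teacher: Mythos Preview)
Your argument is correct and is essentially the standard proof; note that the paper does not give its own proof of this statement but simply cites it as a fact from \cite[Theorems 5.5 and 5.9]{MR991565}. Your two-stage outline matches that reference exactly: Theorem~5.5 there is the extension of $f$ to the subalgebra $\mathcal{A}_{0}$ generated by $A$ via normal forms (with the hypothesis guaranteeing well-definedness), and Theorem~5.9 is Sikorski's extension theorem, proved by Zorn's lemma together with the one-generator extension step using completeness of $\mathcal{B}$ to produce the value $\mathfrak{d}$.
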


\begin{fact}
\label{fact:homomorphism-extension} \cite[Proposition 5.8 and Theorem 5.9]{MR991565}
Assume $\mathcal{B}$ is a complete Boolean algebra and $\mathcal{A}$
is any Boolean algebra. Assume $\mathcal{A}'\subseteq\mathcal{A}$
is a subalegbra, and $f:\mathcal{A}'\rightarrow\mathcal{B}$ is a
homomorphism.

Assume further that $\mathfrak{a}\in\mathcal{A}$, $\mathfrak{b}\in\mathcal{B}$.
Then there exists a homomorphism $g:\mathcal{A}\rightarrow\mathcal{B}$
extending $f$ such that $g\left(\mathfrak{a}\right)=\mathfrak{b}$
iff $\stackrel[\mathfrak{a}'\in\mathcal{A}',\mathfrak{a}'\leq\mathfrak{a}]{}{\sum}f\left(\mathfrak{a}'\right)\leq\mathfrak{b}\leq\stackrel[\mathfrak{a}'\in\mathcal{A}',\mathfrak{a}\leq\mathfrak{a}']{}{\prod}f\left(\mathfrak{a}'\right)$.
\end{fact}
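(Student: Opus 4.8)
The plan is to deduce this from Sikorski's extension theorem (Fact \ref{fact:function-extension}), which already handles the passage from a partial function to a full homomorphism once $\mathcal{B}$ is complete; the only real content is to repackage the single extra requirement $g\left(\mathfrak{a}\right)=\mathfrak{b}$ into the consistency condition of that fact. First I would dispatch the easy (left-to-right) direction. If such a $g$ exists, then for every $\mathfrak{a}'\in\mathcal{A}'$ with $\mathfrak{a}'\leq\mathfrak{a}$ monotonicity of the homomorphism $g$ gives $f\left(\mathfrak{a}'\right)=g\left(\mathfrak{a}'\right)\leq g\left(\mathfrak{a}\right)=\mathfrak{b}$, and taking the supremum over all such $\mathfrak{a}'$ (which exists by completeness of $\mathcal{B}$) yields the left inequality; symmetrically, $\mathfrak{a}\leq\mathfrak{a}'$ gives $\mathfrak{b}\leq f\left(\mathfrak{a}'\right)$, and the infimum yields the right inequality.

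For the converse, assume the double inequality and apply Fact \ref{fact:function-extension} to the subset $A=\mathcal{A}'\cup\left\{ \mathfrak{a}\right\} $ equipped with the function sending each $\mathfrak{a}'\in\mathcal{A}'$ to $f\left(\mathfrak{a}'\right)$ and $\mathfrak{a}$ to $\mathfrak{b}$. A homomorphism $g:\mathcal{A}\rightarrow\mathcal{B}$ extending this function is exactly what we want, so it suffices to verify the consistency condition: whenever $\mathfrak{a}_{0},\dots,\mathfrak{a}_{n-1}\in A$ and $\varepsilon_{i}\in\left\{ \pm1\right\} $ satisfy $\prod_{i<n}\varepsilon_{i}\mathfrak{a}_{i}=0$ in $\mathcal{A}$, the corresponding product of images is $0$ in $\mathcal{B}$. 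I would collapse the factors lying in $\mathcal{A}'$ into a single element $\mathfrak{c}\in\mathcal{A}'$ (their product computed in $\mathcal{A}'$), noting that because $f$ is a homomorphism the product of their images equals $f\left(\mathfrak{c}\right)$.

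Three cases then remain, according to how $\mathfrak{a}$ occurs. If $\mathfrak{a}$ does not occur, the constraint is internal to $\mathcal{A}'$ and holds automatically since $f$ is a homomorphism; if $\mathfrak{a}$ occurs with both signs the product is $0$ on both sides trivially. The two substantive cases are $\mathfrak{c}\cdot\left(-\mathfrak{a}\right)=0$, i.e.\ $\mathfrak{c}\leq\mathfrak{a}$, which forces $f\left(\mathfrak{c}\right)\leq\mathfrak{b}$, and $\mathfrak{c}\cdot\mathfrak{a}=0$, i.e.\ $\mathfrak{a}\leq-\mathfrak{c}$, which forces $f\left(\mathfrak{c}\right)\cdot\mathfrak{b}=0$, equivalently $\mathfrak{b}\leq f\left(-\mathfrak{c}\right)$. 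Requiring the first for every admissible $\mathfrak{c}$ is precisely the left inequality $\sum_{\mathfrak{a}'\leq\mathfrak{a}}f\left(\mathfrak{a}'\right)\leq\mathfrak{b}$, using that a supremum is $\leq\mathfrak{b}$ iff each term is; and requiring the second, after the substitution $\mathfrak{a}'=-\mathfrak{c}$, is precisely the right inequality $\mathfrak{b}\leq\prod_{\mathfrak{a}\leq\mathfrak{a}'}f\left(\mathfrak{a}'\right)$, using the dual fact for infima. Hence the hypothesis is exactly the consistency condition, and Fact \ref{fact:function-extension} produces the desired $g$.

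The main obstacle I expect is the bookkeeping in this reduction: one must argue cleanly that an arbitrary sign-product over $A$ normalizes to one of the listed forms --- here the point is that $\mathcal{A}'$ is a \emph{subalgebra}, so its factors genuinely collapse to a single element whose $f$-image is controlled --- and then match the two surviving requirements against the supremum and infimum in the statement. Everything else is routine monotonicity and the preservation of complements, sums, and products by $f$.
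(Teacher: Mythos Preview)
Your argument is correct. Note, however, that the paper does not actually prove this statement: it is recorded as a \emph{Fact} with a citation to \cite[Proposition~5.8 and Theorem~5.9]{MR991565}, so there is no in-paper proof to compare against. Your reduction to Fact~\ref{fact:function-extension} is essentially the standard one --- in the cited reference, Proposition~5.8 handles the one-step extension to the subalgebra generated by $\mathcal{A}'\cup\{\mathfrak{a}\}$ under exactly the sup/inf condition you isolate, and Theorem~5.9 (Sikorski) then extends to all of $\mathcal{A}$; you have simply folded both steps into a single appeal to Fact~\ref{fact:function-extension}, which is perfectly legitimate. The case analysis is clean and the bookkeeping point you flag (that $\mathcal{A}'$ being a subalgebra lets you collapse the $\mathcal{A}'$-factors to a single $\mathfrak{c}$ with image $f(\mathfrak{c})$) is exactly the point. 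One tiny edge case you might mention for completeness: if $\mathfrak{a}\in\mathcal{A}'$ already, the hypothesis forces $\mathfrak{b}=f(\mathfrak{a})$ (take $\mathfrak{a}'=\mathfrak{a}$ on both sides), so the ``function'' on $\mathcal{A}'\cup\{\mathfrak{a}\}$ is well-defined and the argument goes through unchanged.
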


\begin{defn}
\label{def:generalized types}Suppose $\Bb$ is a Boolean algebra
and $\mathfrak{C}$ is the monster model of a complete theory $T$. 

For a set $A\subseteq\mathfrak{C}$, a \emph{(complete) $\Bb$-type
in $x$ over $A$} is a Boolean algebra homomorphism from the algebra
of formulas $L_{x}\left(A\right)$ consisting of formulas in $x$
over $A$ up to equivalence in $\C$ to $\Bb$. By slight abuse of
notation, we will use a formula to refer to its equivalence class
in $L_{x}\left(A\right)$. The set of all complete $\Bb$-types in
$x$ over $A$ is denoted by $S_{\Bb}^{x}\left(A\right)$. The set
$S_{\Bb}^{n}\left(A\right)$ for $n$ a natural number, or an ordinal,
will be the set of all complete $\Bb$-types over $A$ in some $n$
fixed variables. 

An elementary permutation $\pi:A\rightarrow A$ is a bijective elementary
map. The group of elementary permutations $\pi:A\to A$ acts on $\BtypesA$
by $\left(\pi*p\right)\left(\varphi\left(x,a\right)\right)=p\left(\varphi\left(x,\pi^{-1}\left(a\right)\right)\right)$.
Say that $p_{1},p_{2}\in\BtypesA$ are \emph{elementarily conjugates
over $A$} if they are in the same orbit. 

We say that two $\mathcal{B}$-types are \emph{image conjugates} if
there is some partial isomorphism of Boolean algebras $\sigma$ whose
domain contains the image of $p_{2}$ such that $p_{1}=\sigma\circ p_{2}$.

Finally, we say that $p_{1},p_{2}\in\BtypesA$ are \emph{conjugates
over $A$ }if there is some $\pi:A\to A$ as above, and some partial
Boolean isomorphism $\sigma$ as above such that $p_{1}=\sigma\circ\left(\pi*p_{2}\right)$.
\end{defn}

\begin{rem}
Note that $\sigma\circ\left(\pi*p\right)=\pi\ast\left(\sigma\circ p\right)$.
Thus, as image conjugation and elementary conjugation are clearly
equivalence relations, so is conjugation.
\end{rem}

\begin{rem}
Note that when $\Bb=2$ (that is $\left\{ 0,1\right\} $), a complete
$\Bb$-type is the same as a complete type.

Also, the two notions of elementarily conjugation and conjugation
identify in this case. 
\end{rem}

\begin{rem}
Note that $L_{x}\left(A\right)$ is isomorphic to the quotient of
the Lindenbaum-Tarski algebra $\mathcal{L}_{A,x}$ (the algebra of
$L$ formulas over $A$ in $x$ up to logical equivalence, see \cite[Chapter 26]{MR991595})
by the filter $F$ generated by all sentences $\varphi$ that hold
in $\mathfrak{C}$ (where parameters from $A$ are considered to be
constants).

Indeed, let $f:\mathcal{L}_{A,x}\rightarrow L_{x}\left(A\right)$
be the canonical map sending a formula to its equivalence class in
$\mathfrak{C}$. We want to show that $f^{-1}\left(\top\right)=F$.

$\psi\left(\mathfrak{C}\right)=\mathfrak{C}^{x}$ iff $\mathfrak{C}\vDash\forall x\left(\psi\left(x\right)\right)$; 

and since $\vdash\left(\forall x\psi\left(x\right)\right)\rightarrow\psi\left(x\right),$
we get $\forall x\psi\left(x\right)\leq\psi\left(x\right)$ in $\mathcal{L}_{A,x}$
thus $\psi\left(x\right)\in F$.

On the other hand if $\psi\left(x\right)\in F$ then for some sentence
$\varphi$ such that $\mathfrak{C}\vDash\varphi$ we have $\varphi\leq\psi\left(x\right)$
in $\mathcal{L}_{A,x}$, that is $\vdash\varphi\rightarrow\psi\left(x\right)$,
and certainly $\psi\left(\mathfrak{C}\right)=\mathfrak{C}^{x}$.

This means that for any Boolean algebra $\mathcal{B},$ $\left\{ f\in Hom\left(\mathcal{L}_{A,x},\mathbb{\mathcal{B}}\right)\mid f\left[F\right]=1\right\} $
are canonically isomorphic to $Hom\left(L_{x}\left(A\right),\mathcal{B}\right)$.

Thus we can define $S_{\mathcal{B}}^{x}\left(A\right)$ to be $\left\{ p\in Hom\left(\mathcal{L}_{A,x},\mathcal{B}\right)\mid F\subseteq p^{-1}\left(1\right)\right\} $
without changing anything.
\end{rem}

\begin{example}
\label{ex:set-algebra-types}One might wonder if there are any natural
examples of Boolean types.

Let $\left\langle \mathcal{B}_{i}\right\rangle _{i\in I}$ a sequence
of Boolean algebras. Then the product algebra $\stackrel[i\in I]{}{\prod}\mathcal{B}_{i}$
is defined in the usual sense of products of algebraic structures
--- its elements are choice functions, and operations are preformed
coordinatewise.

By the universal property of product algebras (see \cite[Proposition 6.3]{MR991565}),
\[
S_{\stackrel[i\in I]{}{\prod}\mathcal{B}_{i}}^{x}\left(A\right)\cong\stackrel[i\in I]{}{\prod}S_{\mathcal{B}_{i}}^{x}\left(A\right).
\]

The correspondence is given by $\left\langle p_{i}\right\rangle _{i\in I}\mapsto\left(\varphi\mapsto\left\langle p_{i}\left(\varphi\right)\right\rangle _{i\in I}\right)$.

In particular, for any cardinal $\lambda$, $S_{2^{\lambda}}^{x}\left(A\right)\cong\left(S_{2}^{x}\left(A\right)\right)^{\lambda}=\left(S^{x}\left(A\right)\right)^{\lambda}$
naturally --- where the sequence $\left\langle p_{i}\right\rangle _{i<\lambda}$
corresponds to the Boolean type $p$ satisfying $\varphi\in p_{i}\Longleftrightarrow p\left(\varphi\right)_{i}=1$.

Thus we can consider a sequence of $\lambda$ complete types to be
the same thing, for all intents as purposes, as a $\mathcal{B}$-type
for $\mathcal{B}=2^{\lambda}$. 

This case will give us an idea about the behavior of general Boolean
types.
\end{example}

\subsection{\label{subsec:Counting-Boolean-types}Counting Boolean types}

The main result of this section is Corollary \ref{cor:counting} (generalizing
a result by the third author \cite[Theorem 5.21]{Sh950}), which says
that when $T$ is NIP, the number of Boolean types over a saturated
model up to conjugation has a bound dependent not on $\left|\mathcal{B}\right|$
but on the chain condition satisfied by $\mathcal{B}$. The strategy
will be to encode Boolean types up to conjugacy as complete types
in a long variable tuple.

Throughout this subsection, $A$ is a subset of $\mathfrak{C}$.

\subsubsection{\label{subsec:count-conjugation}Counting Boolean types up to conjugation}

We first concern ourselves with the question of the number of Boolean
types up to conjugation. 
\begin{lem}
\label{lem:counting-set-algebra-types}There exists an injection $f:S_{2^{\lambda}}^{x}\left(A\right)\rightarrow S^{\left|x\right|\cdot\lambda}\left(A\right)$
such that if $f\left(p_{1}\right)$ and $f\left(p_{2}\right)$ are
conjugates then $p_{1}$ and $p_{2}$ are elementary conjugates.
\end{lem}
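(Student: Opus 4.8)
The plan is to exploit the identification from Example~\ref{ex:set-algebra-types}, under which a $2^{\lambda}$-type $p\in S_{2^{\lambda}}^{x}\left(A\right)$ is the same datum as a sequence $\sequence{p_{i}}{i<\lambda}$ of ordinary complete types $p_{i}\in S^{x}\left(A\right)$, the type $p_{i}$ recording the $i$-th coordinate of the value $p\left(\varphi\right)\in2^{\lambda}$; moreover this identification is coordinatewise-equivariant, i.e.\ $\left(\pi\ast p\right)_{i}=\pi\ast p_{i}$ for any elementary permutation $\pi$ of $A$. Since $\C$ realizes every complete type over the small set $A$, I would fix once and for all, for each $r\in S^{x}\left(A\right)$, a realization $a_{r}\models r$ in $\C$. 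Introducing $\lambda$ disjoint copies $\sequence{x_{i}}{i<\lambda}$ of the tuple $x$ (so that we have $\left|x\right|\cdot\lambda$ variables in total), I then define
\[
f\left(p\right)=\tp\left(\sequence{a_{p_{i}}}{i<\lambda}/A\right)\in S^{\left|x\right|\cdot\lambda}\left(A\right).
\]

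Injectivity is immediate from the fact that each $p_{i}$ is recoverable from $f\left(p\right)$: restricting $f\left(p\right)$ to the formulas in the block $x_{i}$ gives $\tp\left(a_{p_{i}}/A\right)=p_{i}$, so $f\left(p\right)=f\left(q\right)$ forces $p=q$. For the conjugacy clause, recall first that for complete types (the case $\Bb=2$) image conjugation is trivial, so by the remarks following Definition~\ref{def:generalized types} ``conjugate'' just means ``elementarily conjugate''. Hence suppose $\pi\ast f\left(p_{1}\right)=f\left(p_{2}\right)$ for some elementary permutation $\pi:A\to A$. The whole point is to push this single equality through every coordinate projection at once. The only technical ingredient is that the $\ast$-action commutes with restriction to a sub-block of variables, $\left(\pi\ast q\right)\restriction x_{i}=\pi\ast\left(q\restriction x_{i}\right)$, which is immediate from the definition of $\ast$ because $\pi$ acts only on parameters. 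Restricting $\pi\ast f\left(p_{1}\right)=f\left(p_{2}\right)$ to the block $x_{i}$ therefore yields $\pi\ast p_{1,i}=p_{2,i}$ for every $i<\lambda$, so the single $\pi$ conjugates $p_{1}$ to $p_{2}$ coordinatewise; by the equivariance noted above this is exactly elementary conjugacy of $p_{1}$ and $p_{2}$ as $2^{\lambda}$-types.

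I do not expect a serious mathematical obstacle here: once the construction is set up, everything reduces to the observation that elementary conjugacy of the long types is detected on coordinates by a common permutation. The points that require care are purely bookkeeping: that the realizations $a_{r}$ are chosen as a function of $r$ alone (so that the same enumeration underlies both $f\left(p_{1}\right)$ and $f\left(p_{2}\right)$), and that the coordinatewise equivariance of the Example~\ref{ex:set-algebra-types} identification is spelled out, since it is precisely this that lets the common $\pi$ obtained on $S^{\left|x\right|\cdot\lambda}\left(A\right)$ be read back as a single conjugacy of $2^{\lambda}$-types. These are exactly the hypotheses that make the coordinate argument valid.
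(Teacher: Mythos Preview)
Your proposal is correct and follows essentially the same approach as the paper: choose a fixed realization $b_{r}\models r$ for each $r\in S^{x}\left(A\right)$, send $p$ to $\tp\left(\sequence{b_{p_{i}}}{i<\lambda}/A\right)$, and argue coordinatewise both for injectivity and for the conjugacy clause. The paper's write-up is terser but the construction and the verification are identical to yours.
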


\begin{proof}
Note first that $\left(S^{x}\left(A\right)\right)^{\lambda}$ can
be embedded in $S^{\left|x\right|\cdot\lambda}\left(A\right)$ ---
choose for each $p\in S^{x}\left(A\right)$ some $b_{p}\in\mathfrak{C}$
realizing it; and send each $\left\langle p_{i}\right\rangle _{i<\lambda}$
to $\tp\left(\left\langle b_{p_{i}}\right\rangle _{i<\lambda}/A\right)$.
Obviously, this is injective.

For $p\in S_{2^{\lambda}}^{x}\left(A\right)$, let $\overline{p}\in\left(S^{x}\left(A\right)\right)^{\lambda}$
be the corresponding sequence (as in Example \ref{ex:set-algebra-types})
and take its image $q\in S^{\left|x\right|\cdot\lambda}\left(A\right)$
under this embedding. Then we define $f\left(p\right)=q$.

Assume $p_{1},p_{2}\in S_{2^{\lambda}}^{x}\left(A\right)$ and let
$q_{1}=f\left(p_{1}\right),q_{2}=f\left(p_{2}\right)$. If $q_{1},q_{2}$
are elementary conjugates as witnessed by $\pi$, so are $\overline{p_{1}},\overline{p_{2}}$
(in each coordinate) and thus also $p_{1},p_{2}$. 

Indeed for any $\varphi\left(x,a\right)$ and $i<\lambda$, 
\begin{align*}
p_{1}\left(\varphi\left(x,\pi^{-1}\left(a\right)\right)\right)_{i} & =1\Longleftrightarrow\mathfrak{C}\vDash\varphi\left(b_{\left(p_{1}\right)_{i}},\pi^{-1}\left(a\right)\right)\Longleftrightarrow\\
 & \mathfrak{C}\vDash\varphi\left(b_{\left(p_{2}\right)_{i}},a\right)\Longleftrightarrow\left(p_{2}\left(\varphi\left(x,a\right)\right)\right)_{i}=1.
\end{align*}

(where for any formula $\varphi$, $p\left(\varphi\right)\in2^{\lambda}$,
so $p\left(\varphi\right)$ is a function from $\lambda$ to 2 and
$p\left(\varphi\right)_{i}$ is the image of $i$.)

Thus $p_{1}\left(\varphi\left(x,\pi^{-1}\left(a\right)\right)\right)=p_{2}\left(\varphi\left(x,a\right)\right)\Rightarrow p_{2}=\pi\ast p_{1}$.
\end{proof}
As an immediate corollary we the following:
\begin{cor}
\label{cor:number-of-set-algebra-types}The number of types in $S_{2^{\lambda}}^{x}\left(A\right)$
up to elementary conjugation is at most the number of types in $S^{\lambda\left|x\right|}\left(A\right)$
up to conjugation.
\end{cor}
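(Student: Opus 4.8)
The plan is to derive the inequality as a direct counting consequence of the injection $f$ from Lemma \ref{lem:counting-set-algebra-types}, read in contrapositive form. The crucial observation is that the lemma asserts: whenever $f\left(p_{1}\right)$ and $f\left(p_{2}\right)$ are conjugate, $p_{1}$ and $p_{2}$ are elementary conjugate. Equivalently, if $p_{1}$ and $p_{2}$ are \emph{not} elementary conjugate, then $f\left(p_{1}\right)$ and $f\left(p_{2}\right)$ are \emph{not} conjugate.

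First I would choose a set $R\subseteq S_{2^{\lambda}}^{x}\left(A\right)$ consisting of exactly one representative from each elementary-conjugation class, so that $\left|R\right|$ equals the number of types in $S_{2^{\lambda}}^{x}\left(A\right)$ up to elementary conjugation, and so that no two distinct members of $R$ are elementary conjugate. Since $f$ is injective, it restricts to a bijection between $R$ and its image $f\left[R\right]$, whence $\left|f\left[R\right]\right|=\left|R\right|$.

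Next, by the contrapositive property above, any two distinct elements of $f\left[R\right]$ are non-conjugate, since their $f$-preimages in $R$ are non-elementary-conjugate. Hence the elements of $f\left[R\right]$ lie in pairwise distinct conjugation classes of $S^{\lambda\left|x\right|}\left(A\right)$, so the number of conjugation classes of $S^{\lambda\left|x\right|}\left(A\right)$ is at least $\left|f\left[R\right]\right|=\left|R\right|$, which is exactly the number of elementary-conjugation classes of $S_{2^{\lambda}}^{x}\left(A\right)$. This is the asserted inequality.

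The argument has no genuine obstacle, being a formal consequence of the lemma; the only point requiring care is the asymmetry between the two quotients. We may count the domain only up to \emph{elementary} conjugation while counting the codomain up to \emph{full} conjugation, precisely because the lemma controls a single implication --- non-conjugacy of the images forces non-elementary-conjugacy of the originals --- and nothing in the construction lets us strengthen elementary conjugation to full conjugation upstairs.
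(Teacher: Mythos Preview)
Your argument is correct and is exactly the immediate deduction the paper intends: it states the corollary with no proof beyond ``As an immediate corollary we [get] the following,'' relying on precisely the contrapositive of Lemma~\ref{lem:counting-set-algebra-types} that you spell out. Your closing remark about the asymmetry between elementary conjugation upstairs and full conjugation downstairs is accurate but could be simplified by noting (as the paper does just after Definition~\ref{def:generalized types}) that for $2$-types the two notions coincide, so in the codomain $S^{\lambda\left|x\right|}\left(A\right)$ there is no real distinction.
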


\begin{defn}
Fix some complete Boolean algebra $\Bb$ and regular cardinal $\kappa$.
$\mathcal{B}$ is $\kappa$-c.c if there is no antichain (a set of
pairwise disjoint elements from $\Bb^{+}$) in $\Bb$ of size $\kappa$. 
\end{defn}

\begin{defn}
Suppose $p\in S_{\Bb}^{x}\left(A\right)$, and $\varphi\left(x,y\right)$
is some formula. Then $p\upharpoonright\varphi$, or $p|_{\varphi}$,
is the restriction of $p$ to the definable sets of the form $\varphi\left(x,a\right)$
for $a$ some tuple (in the length of $y$) from $A$. 
\end{defn}

\begin{prop}
\label{prop:image not too big}Assume $\varphi\left(x,y\right)$ is
$NIP$ and $\mathcal{B}$ is $\kappa$-c.c. Then for any $p\in S_{\mathcal{B}}^{x}\left(A\right)$,
the image of $p|_{\varphi}$ has cardinality $\leq2^{<\kappa}$.

In particular if $T$ is $NIP$ this holds for every $\varphi$.
\end{prop}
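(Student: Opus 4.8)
The plan is to bound the size of the image of $p|_\varphi$ by relating large images to large antichains in $\mathcal{B}$, using the NIP property of $\varphi$ to control the combinatorial structure. Recall the image of $p|_\varphi$ is the set $\set{p(\varphi(x,a))}{a\in A}$. Suppose toward a contradiction that this image has cardinality $>2^{<\kappa}$, so in particular it has size $\geq\left(2^{<\kappa}\right)^+$; I want to manufacture an antichain of size $\kappa$ in $\mathcal{B}$, contradicting the $\kappa$-c.c. assumption.

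First I would fix a large family $\sequence{a_i}{i<\lambda}$ from $A$ (with $\lambda=\left(2^{<\kappa}\right)^+$) such that the values $p(\varphi(x,a_i))$ are pairwise distinct. The key idea is that distinctness of $p(\varphi(x,a_i))$ and $p(\varphi(x,a_j))$ means precisely that $p(\varphi(x,a_i))\cdot(-p(\varphi(x,a_j)))\neq 0$ or $(-p(\varphi(x,a_i)))\cdot p(\varphi(x,a_j))\neq 0$, i.e., one of the ``symmetric difference'' pieces is nonzero. Since $p$ is a homomorphism, $p(\varphi(x,a_i))\cdot(-p(\varphi(x,a_j)))=p(\varphi(x,a_i)\wedge\neg\varphi(x,a_j))$, so the nonvanishing of this element is governed by whether the corresponding \emph{definable} set is assigned a nonzero value. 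This is exactly where NIP should enter: NIP of $\varphi$ bounds the complexity of the Boolean combinations $\varphi(x,a_i)\wedge\neg\varphi(x,a_j)$ that can simultaneously be ``consistent'' (nonzero under $p$).

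The main step is a Ramsey/Erd\H{o}s--Rado-style extraction combined with the alternation bound from NIP. Concretely, I would look for an \emph{indiscernible} or at least homogeneous subsequence of $\sequence{a_i}{i<\lambda}$ (extracting one using Erd\H{o}s--Rado, which is why the bound $2^{<\kappa}$ rather than $<\kappa$ appears — the coloring is by the quantifier-free data of which finite products $\prod_{i\in s}\varepsilon_i\,\varphi(x,a_i)$ vanish under $p$), and then use the finiteness of the VC/alternation rank of $\varphi$ to show that along this homogeneous sequence the values $p(\varphi(x,a_i))$ must eventually stabilize \emph{unless} there is a genuinely infinite ``shattering'' pattern. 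From a shattering pattern of length $\kappa$ I can read off $\kappa$ many pairwise disjoint nonzero elements: if for each $i$ the set $\varphi(x,a_i)\wedge\bigwedge_{j\neq i}\neg\varphi(x,a_j)$ (or an appropriate localized version living ``below'' a fixed element) is nonzero under $p$, these images form an antichain. This contradicts $\kappa$-c.c., yielding the bound.

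The hard part will be the precise combinatorial bookkeeping in the extraction step: one must convert ``the image is large'' into ``there is an antichain of size $\kappa$'' while paying exactly the $2^{<\kappa}$ cost and no more. The subtlety is that NIP bounds how many \emph{distinct} traces $\varphi(x,a)$ can cut out on any finite set, but here the relevant objects are the values under the homomorphism $p$, so I must transfer the combinatorial alternation bound for $\varphi$ to the algebra $\mathcal{B}$ via the fact that $p$ preserves disjointness and that nonzero products under $p$ correspond to consistent formulas. I expect the cleanest route is to stratify the $a_i$ by the value $p(\varphi(x,a_i))$ and argue that too many distinct values, after thinning via Erd\H{o}s--Rado to a monotone or indiscernible pattern, force either an independent-set configuration contradicting NIP or, directly, a $\kappa$-sized antichain contradicting the chain condition; balancing these two contradictions against the cardinal arithmetic is the crux. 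The final clause, that under full NIP this holds for every $\varphi$, is then immediate since every formula is NIP in an NIP theory.
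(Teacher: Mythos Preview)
Your plan has the right ingredients --- Erd\H{o}s--Rado extraction, the VC/alternation bound from NIP, and the $\kappa$-c.c. --- but you are trying to assemble them by hand into an antichain, and the ``hard part'' you flag is exactly where the paper invokes a ready-made lemma instead. The paper's route is shorter and structurally different: rather than extracting a homogeneous sequence and then building an antichain from a shattering pattern, it uses the notion of an \emph{independent subset} of a Boolean algebra (a set $X\subseteq\cal B$ such that every nontrivial finite product $\prod_i \varepsilon_i \mathfrak{a}_i$ with $\mathfrak{a}_i\in X$ is nonzero). A theorem of Shelah says that in a $\kappa$-c.c.\ algebra, if $\lambda$ is regular with $\mu^{<\kappa}<\lambda$ for all $\mu<\lambda$, then every subset of size $\lambda$ contains an independent subset of size $\lambda$; one checks $\lambda=(2^{<\kappa})^+$ satisfies the hypothesis. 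On the other hand, NIP gives a \emph{finite} bound on independent subsets of $\image(p|_\varphi)$ directly: if $\varphi$ has dual VC-dimension $n$, then for any $a_0,\dots,a_n$ some Boolean combination $\bigwedge_{i\in I}\varphi(x,a_i)\wedge\bigwedge_{i\notin I}\neg\varphi(x,a_i)$ is inconsistent, and applying $p$ shows $\{p(\varphi(x,a_i))\}_{i\le n}$ is not independent. So $\image(p|_\varphi)$ has no independent subset of size $n+1$, hence (by the Shelah lemma) has size $<\lambda$.

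Compared to your outline, the contradiction lands in a different place: you aim to violate $\kappa$-c.c.\ by exhibiting a $\kappa$-antichain, whereas the paper uses $\kappa$-c.c.\ \emph{inside} the black-box lemma and gets the contradiction from NIP (no independent $(n{+}1)$-set). Your coloring ``by which finite products vanish under $p$'' is essentially the content of that lemma's proof, so your approach would likely go through, but you'd be reproving the independent-subset theorem inline. The paper's packaging is cleaner and makes the two halves of the argument (the pure Boolean-algebra fact and the model-theoretic input from NIP) completely modular.
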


\begin{proof}
Recall that a subset $X$ of $\Bb$ is \emph{independent} if every
nontrivial finite product from it is non-empty: for every $\mathfrak{a}_{1},\ldots,\mathfrak{a}_{n}$
and $\mathfrak{b}_{1},\ldots,\mathfrak{b}_{m}$ in $X$ such that
$\mathfrak{a}_{i}\neq\mathfrak{b}_{j}$ for all $i,j$, the product
$\mathfrak{a}_{1}\cdot\ldots\cdot\mathfrak{a}_{n}\cdot-\mathfrak{b}_{1}\cdot\ldots\cdot-\mathfrak{b}_{m}$
is not $0$. By \cite[Theorem 10.1]{MR593014,MR991565}, if $\lambda$
is a cardinal such that $\lambda$ is regular and $\mu^{<\kappa}<\lambda$
for all $\mu<\lambda$, then every subset $X\subseteq\Bb$ of cardinality
$\lambda$ has an independent subset $Y\subseteq X$ of cardinality
$\lambda$.

Let $\lambda=\left(2^{<\kappa}\right)^{+}$. It is easy to see that
for all $\mu<\lambda$, $\mu^{<\kappa}<\lambda$ --- since $\kappa$
is regular, for every $\beta<\kappa$ every function from $\left|\beta\right|$
to $\sup\left\{ 2^{\left|i\right|}\right\} _{i<\kappa}=2^{<\kappa}$
is contained in some $2^{\left|\alpha\right|}$ for $\alpha<\kappa$.

Thus 
\begin{align*}
\left(2^{<\kappa}\right)^{\left|\beta\right|} & \leq\stackrel[\alpha<\kappa]{}{\sum}\left|\left(2^{\left|\alpha\right|}\right)^{\left|\beta\right|}\right|=\stackrel[\alpha<\kappa]{}{\sum}2^{\left|\alpha\right|\left|\beta\right|}\leq\kappa\cdot\sup\left\{ 2^{\left|\alpha\right|\left|\beta\right|}\mid\alpha<\kappa\right\} =\\
 & \kappa\cdot\sup\left\{ 2^{\left|\alpha\right|}\mid\alpha<\kappa\right\} =2^{<\kappa},
\end{align*}

so 
\[
\mu^{<\kappa}=sup\left\{ \mu^{\left|\beta\right|}\right\} _{\beta<\kappa}\leq sup\left\{ \left(2^{<\kappa}\right)^{\left|\beta\right|}\right\} _{\beta<\kappa}\leq2^{<\kappa}<\lambda
\]

Since $T$ is NIP, $\varphi\left(x,y\right)$ has dual VC-dimension
$n<\omega$: for any $\left\langle a_{i}\right\rangle _{i<n+1}\in A^{\left|y\right|}$,
$\left\{ \varphi\left(b,y\right)\right\} _{b\in\mathfrak{C}^{\left|x\right|}}$
does not shatter $\left\langle a_{i}\right\rangle _{i<n+1}$.

Thus for any such sequence $\left\langle a_{i}\right\rangle _{i<n+1}$
exists $I\subseteq n+1$ such that 
\[
\stackrel[i\in I]{}{\bigwedge}\varphi\left(x,a_{i}\right)\wedge\stackrel[i\in\left(n+1\right)\setminus I]{}{\bigwedge}\neg\varphi\left(x,a_{i}\right)
\]

is inconsistent.

We get 
\[
\stackrel[i\in I]{}{\prod}p\left(\varphi\left(x,a_{i}\right)\right)\cdot\stackrel[i\in\left(n+1\right)\setminus I]{}{\prod}-p\left(\varphi\left(x,a_{i}\right)\right)=p\left(\stackrel[i\in I]{}{\bigwedge}\varphi\left(x,a_{i}\right)\wedge\stackrel[i\in\left(n+1\right)\setminus I]{}{\bigwedge}\neg\varphi\left(x,a_{i}\right)\right)=0,
\]

so $Im\left(p|_{\varphi}\right)$ has no independent set of size $n+1$,
let alone $\lambda$ --- thus $\left|Im\left(p|_{\varphi}\right)\right|<\lambda\Rightarrow\left|Im\left(p|_{\varphi}\right)\right|\leq2^{<\kappa}$.

\end{proof}
\begin{rem}
If $\varphi$ has IP and $\mathcal{B}$ is complete, then there is
a $\mathcal{B}$-type $p$ such that $Im\left(p|_{\varphi}\right)=\mathcal{B}$.
Indeed by compactness there is a sequence of parameters $\left\langle b_{\mathfrak{a}}\mid\mathfrak{a}\in\mathcal{B}\right\rangle $
such that $\left\langle \varphi\left(x,b_{\mathfrak{a}}\right)\mid\mathfrak{a}\in\mathcal{B}\right\rangle $
is independent as a subset of $L_{x}\left(\left\{ b_{\mathfrak{a}}\right\} _{\mathfrak{a}\in\mathcal{B}}\right)$,
and $f:\left\{ \varphi\left(x,b_{\mathfrak{a}}\right)\right\} _{\mathfrak{a}\in\mathcal{B}}\rightarrow\mathcal{B}$
defined as $f\left(\varphi\left(x,b_{\mathfrak{a}}\right)\right)=\mathfrak{a}$
can be extended to a $\mathcal{B}$-type by Fact \ref{fact:function-extension},
since there are no (non-trivial) Boolean relations on the domain.
\end{rem}

Now assume NIP and that $\mathcal{B}$ has $\kappa$-c.c.. Let ${\cal A}$
be the set of subalgebras $B$ of $\Bb$ of size at most $\mu=2^{<\kappa}+\left|x\right|+\left|T\right|$.
Proposition \ref{prop:image not too big} says that for $p\in\BtypesA$,
the algebra $B_{p}$ which is the image of $p$, is in ${\cal A}$.
For each $B\in{\cal A}$, choose some enumeration $\sequence{\mathfrak{a}_{B,i}}{i<\mu}$
(maybe with repetitions) of $B^{+}$. Let us say that $B_{1}$ is
conjugate to $B_{2}$ if there is a (unique) isomorphism $\sigma:B_{1}\to B_{2}$
taking $\mathfrak{a}_{B_{1},i}$ to $\mathfrak{a}_{B_{2},i}$.

Given $B\in{\cal A}$, for each $i<\mu$, choose an ultrafilter $D_{B,i}$
(of $B$) which contains $\mathfrak{a}_{B,i}$ (equivalently a homomorphism
from $B$ to $2$ such that $D_{B,i}\left(\mathfrak{a}_{B,i}\right)=1$,
see \cite[Proposition 2.15]{MR991565}). We ask that if $B_{1}$ and
$B_{2}$ are conjugates, say via $\sigma$, then $\sigma\left(D_{B_{1},i}\right)=D_{B_{2},i}$
--- or in the language of homomorphisms, $D_{B_{1},i}=D_{B_{2},i}\circ\sigma$.
To achieve this we choose representatives for the conjugacy classes
and a sequence of ultrafilters for each representative and construct
the others from it. Note that if $B_{1}$ is the chosen representative,
and $\sigma_{1}:B_{3}\rightarrow B_{2}$ and $\sigma_{2}:B_{2}\rightarrow B_{1}$
witness the conjugacy then $\sigma_{2}\circ\sigma_{1}:B_{3}\rightarrow B_{1}$
witnesses that $B_{1}$ and $B_{3}$ are conjugates. Thus $D_{B_{3},i}=D_{B_{1},i}\circ\left(\sigma_{2}\circ\sigma_{1}\right)=\left(D_{B_{1},i}\circ\sigma_{2}\right)\circ\sigma_{1}=D_{B_{2},i}\circ\sigma_{1}$.

Let $D_{B}=\left\langle D_{B,i}\right\rangle _{i<\mu}:B\rightarrow2^{\mu}$,
the product homomophism.

For $i<\mu$ and $p\in\BtypesA$, let $\widetilde{q}_{p}\in S_{2^{\mu}}^{x}\left(A\right)$
be the $2^{\mu}$-type $D_{B_{p}}\circ p$. Finally, let $q_{p}=f\left(\widetilde{q}_{p}\right)$
in $S^{\sequence{x_{i}}{i<\mu}}\left(A\right)$ where $f:S_{2^{\mu}}^{x}\left(A\right)\rightarrow S^{\sequence{x_{i}}{i<\mu}}\left(A\right)$
is as in Lemma \ref{lem:counting-set-algebra-types}.
\begin{prop}
\label{prop:attaching a real type to a homomorphism}(Assuming $\mathcal{B}$
has $\kappa$-c.c. and $T$ has $NIP$) Suppose $p_{1},p_{2}\in\BtypesA$
and $B=B_{p_{1}}=B_{p_{2}}$ and that $\widetilde{q}_{p_{1}}=\widetilde{q}_{p_{2}}$.
Then $p_{1}=p_{2}$. 
\end{prop}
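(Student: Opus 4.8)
The plan is to reduce the whole statement to a single fact: that the product homomorphism $D_{B}\colon B\to2^{\mu}$ is \emph{injective}. Granting this, the conclusion is immediate and purely formal. Since $B=B_{p_{1}}=B_{p_{2}}$, for every formula $\varphi\in L_{x}\left(A\right)$ both values $p_{1}\left(\varphi\right)$ and $p_{2}\left(\varphi\right)$ lie in $B$, so $D_{B}$ may be applied to them. The hypothesis $\widetilde{q}_{p_{1}}=\widetilde{q}_{p_{2}}$ unwinds to $D_{B}\left(p_{1}\left(\varphi\right)\right)=D_{B}\left(p_{2}\left(\varphi\right)\right)$ for all such $\varphi$, and injectivity of $D_{B}$ then gives $p_{1}\left(\varphi\right)=p_{2}\left(\varphi\right)$ for every $\varphi$, i.e. $p_{1}=p_{2}$. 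So the entire proof hinges on the separation property of $D_{B}$.

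To show $D_{B}$ separates the points of $B$, I would argue directly from the two defining features of the construction: the enumeration $\sequence{\mathfrak{a}_{B,i}}{i<\mu}$ exhausts $B^{+}$, and each $D_{B,i}$ is a homomorphism to $2$ with $D_{B,i}\left(\mathfrak{a}_{B,i}\right)=1$. Given distinct $\mathfrak{b},\mathfrak{c}\in B$, their symmetric difference is nonzero, so at least one of $\mathfrak{b}-\mathfrak{c}$ and $\mathfrak{c}-\mathfrak{b}$ is a nonzero element of $B$; say $\mathfrak{b}-\mathfrak{c}\neq0$. Then $\mathfrak{b}-\mathfrak{c}=\mathfrak{a}_{B,i}$ for some $i<\mu$, and since $D_{B,i}$ is a homomorphism, $1=D_{B,i}\left(\mathfrak{a}_{B,i}\right)=D_{B,i}\left(\mathfrak{b}\right)\cdot\left(-D_{B,i}\left(\mathfrak{c}\right)\right)$, which in $2$ forces $D_{B,i}\left(\mathfrak{b}\right)=1$ and $D_{B,i}\left(\mathfrak{c}\right)=0$. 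Hence $D_{B}\left(\mathfrak{b}\right)$ and $D_{B}\left(\mathfrak{c}\right)$ differ in the $i$-th coordinate, as required.

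Everything else is bookkeeping. In particular, nothing here uses the compatibility of the ultrafilters $D_{B,i}$ with the conjugacy isomorphisms $\sigma$ (that compatibility is needed only for the subsequent counting bound, not for this injectivity statement), and nothing uses the injectivity of $f$ from Lemma \ref{lem:counting-set-algebra-types}, since the proposition is phrased in terms of $\widetilde{q}$ rather than $q$. I do not expect a genuine obstacle: the only step carrying content is the recognition that ``the enumeration lists all of $B^{+}$'' together with ``$\mathfrak{a}_{B,i}\in D_{B,i}$'' is precisely what manufactures a coordinate witnessing any prescribed nonzero difference in $B$. Once that observation is in place, the argument is a two-line consequence.
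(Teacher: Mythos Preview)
Your proof is correct and is essentially the paper's own argument: the paper also picks a formula where $p_{1}$ and $p_{2}$ differ, sets $\mathfrak{b}=p_{1}\left(\varphi\right)-p_{2}\left(\varphi\right)\in B^{+}$, chooses $i$ with $\mathfrak{b}=\mathfrak{a}_{B,i}$, and observes that $D_{B,i}$ then separates the two values. Your only difference is cosmetic --- you abstract this step as ``$D_{B}$ is injective'' and conclude, whereas the paper runs the contrapositive directly on $p_{1},p_{2}$.
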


\begin{proof}
If not, then for some $\varphi\left(x,a\right)$, $p_{1}\left(\varphi\left(x,a\right)\right)\neq p_{2}\left(\varphi\left(x,a\right)\right)$. 

WLOG $p_{1}\left(\varphi\left(x,a\right)\right)\nleq p_{2}\left(\varphi\left(x,a\right)\right)$,
and let $\mathfrak{b}=p_{1}\left(\varphi\left(x,a\right)\right)-p_{2}\left(\varphi\left(x,a\right)\right)$,
so $\mathfrak{b}\in B^{+}$. For some $i<\mu$, $\mathfrak{b}=\mathfrak{a}_{B,i}$.
It follows that $p_{1}\left(\varphi\left(x,a\right)\right)-p_{2}\left(\varphi\left(x,a\right)\right)\in D_{B,i}$,
hence $\widetilde{q}_{p_{1}}\left(\varphi\left(x,a\right)\right)_{i}=1$
and $\widetilde{q}_{p_{2}}\left(\varphi\left(x,a\right)\right)_{i}=0$
that is $\widetilde{q}_{p_{1}}\neq\widetilde{q}_{p_{2}}$.
\end{proof}
\begin{cor}
\label{cor:elementary conjugation}(Assuming $\mathcal{B}$ has $\kappa$-c.c.
and $T$ has $NIP$) Suppose $p_{1},p_{2}\in\BtypesA$ and $B=B_{p_{1}}=B_{p_{2}}$.
If $q_{p_{1}}$ and $q_{p_{2}}$ are conjugates over $A$, then $p_{1}$
and $p_{2}$ are elementarily conjugates over $A$. 
\end{cor}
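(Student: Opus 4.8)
The plan is to transport the conjugacy of the real types $q_{p_1},q_{p_2}$ backwards through the two constructions $p_i\mapsto\widetilde{q}_{p_i}\mapsto q_{p_i}$ and land exactly in the hypothesis of Proposition \ref{prop:attaching a real type to a homomorphism}. First I would note that $q_{p_1},q_{p_2}$ are ordinary complete types (the case $\mathcal{B}=2$), where image conjugation is trivial and hence conjugation over $A$ coincides with elementary conjugation over $A$. So the hypothesis provides an elementary permutation $\pi:A\to A$ with $q_{p_2}=\pi\ast q_{p_1}$. Since $q_{p_i}=f\left(\widetilde{q}_{p_i}\right)$ for the injection $f$ of Lemma \ref{lem:counting-set-algebra-types} (applied with $\lambda=\mu$), the defining property of $f$ gives that $\widetilde{q}_{p_1}$ and $\widetilde{q}_{p_2}$ are elementary conjugates, witnessed by the same $\pi$; that is, $\widetilde{q}_{p_2}=\pi\ast\widetilde{q}_{p_1}$ in $S_{2^{\mu}}^{x}\left(A\right)$.

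Next I would rewrite this identity entirely in terms of the $\mathcal{B}$-types. Two elementary observations do the work. First, the $\pi$-action commutes with postcomposition by the fixed homomorphism $D_{B}$: unwinding the definitions, $\left(\pi\ast\widetilde{q}_{p_1}\right)\left(\varphi\left(x,a\right)\right)=D_{B}\left(p_1\left(\varphi\left(x,\pi^{-1}\left(a\right)\right)\right)\right)=D_{B}\left(\left(\pi\ast p_1\right)\left(\varphi\left(x,a\right)\right)\right)$, so $\pi\ast\widetilde{q}_{p_1}=D_{B}\circ\left(\pi\ast p_1\right)$. Second, the image algebra is preserved under the $\pi$-action, $B_{\pi\ast p_1}=B_{p_1}=B$, since $\pi\ast p_1$ takes the same values as $p_1$ on reindexed formulas; consequently $\widetilde{q}_{\pi\ast p_1}=D_{B_{\pi\ast p_1}}\circ\left(\pi\ast p_1\right)=D_{B}\circ\left(\pi\ast p_1\right)$. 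Combining the two, $\widetilde{q}_{p_2}=\widetilde{q}_{\pi\ast p_1}$, and moreover $p_2$ and $\pi\ast p_1$ share the same image $B$.

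At this point Proposition \ref{prop:attaching a real type to a homomorphism} applies verbatim to the pair $p_2,\pi\ast p_1$ --- they have a common image $B$ and the same associated $2^{\mu}$-type --- and forces $p_2=\pi\ast p_1$, which is precisely the statement that $p_1$ and $p_2$ are elementarily conjugate over $A$. I expect the only delicate point to be the bookkeeping of the middle paragraph: one must check that the $\pi$-action leaves the image algebra literally equal to $B$, so that the \emph{same} tuple of ultrafilters $D_{B}$ is used on both sides, since it is the strict equality $\widetilde{q}_{p_2}=\widetilde{q}_{\pi\ast p_1}$ --- not mere conjugacy --- that is required to invoke Proposition \ref{prop:attaching a real type to a homomorphism}.
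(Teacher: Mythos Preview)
Your proposal is correct and follows essentially the same route as the paper: pull the elementary permutation $\pi$ back through $f$ via Lemma~\ref{lem:counting-set-algebra-types}, observe that $\pi\ast\widetilde{q}_{p_1}=\widetilde{q}_{\pi\ast p_1}$ and $B_{\pi\ast p_1}=B$, and then apply Proposition~\ref{prop:attaching a real type to a homomorphism} to $\pi\ast p_1$ and $p_2$. If anything, you are slightly more explicit than the paper in justifying why conjugacy of the $q_{p_i}$ amounts to elementary conjugacy and in checking the bookkeeping that the same $D_B$ is used on both sides.
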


\begin{proof}
Suppose that $\pi*q_{p_{1}}=q_{p_{2}}$ for an elementary permutation
$\pi:A\to A$. Then also, by Lemma \ref{lem:counting-set-algebra-types}
(to be precise, by the choice of $q_{p_{1}},q_{p_{2}}$), $\pi*\widetilde{q}_{p_{1}}=\widetilde{q}_{p_{2}}$.
Note that $\pi*\widetilde{q}_{p_{1}}=\widetilde{q}_{\pi*p_{1}}$.

But $B_{\pi\ast p_{1}}=B_{p_{1}}$ (since $\pi$ just permutes the
domain), thus we can apply Proposition \ref{prop:attaching a real type to a homomorphism}
to $\pi\ast p_{1},p_{2}$. 
\end{proof}
\begin{cor}
\label{cor:counting B types}(Assuming $\mathcal{B}$ has $\kappa$-c.c.
and $T$ has $NIP$) If $p_{1},p_{2}\in\BtypesA$, $B_{p_{1}}$ and
$B_{p_{2}}$ are conjugates and $q_{p_{1}}$ and $q_{p_{2}}$ are
conjugates over $A$, then $p_{1}$ and $p_{2}$ are conjugates over
$A$. 
\end{cor}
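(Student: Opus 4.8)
The plan is to reduce to Corollary \ref{cor:elementary conjugation}, which already handles the case of \emph{equal} image algebras, by using the image-conjugacy isomorphism to replace $p_2$ with a type having the same image as $p_1$. Let $\sigma:B_{p_1}\to B_{p_2}$ be the isomorphism witnessing that $B_{p_1}$ and $B_{p_2}$ are conjugates, so $\sigma\left(\mathfrak{a}_{B_{p_1},i}\right)=\mathfrak{a}_{B_{p_2},i}$ for all $i<\mu$. Set $p_2'=\sigma^{-1}\circ p_2\in\BtypesA$. Since $\sigma^{-1}$ is an isomorphism onto $B_{p_1}$, the image of $p_2'$ is $\sigma^{-1}\left(B_{p_2}\right)=B_{p_1}$, so $B_{p_2'}=B_{p_1}$.

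The crucial step is to check that $\widetilde{q}_{p_2'}=\widetilde{q}_{p_2}$, and this is precisely where the coherent choice of the ultrafilters $D_{B,i}$ along conjugacy classes is used. By that choice, $D_{B_{p_1},i}=D_{B_{p_2},i}\circ\sigma$ for every $i<\mu$, hence $D_{B_{p_1},i}\circ\sigma^{-1}=D_{B_{p_2},i}$, and taking the product homomorphism over $i<\mu$ gives $D_{B_{p_1}}\circ\sigma^{-1}=D_{B_{p_2}}$. Therefore
\[
\widetilde{q}_{p_2'}=D_{B_{p_2'}}\circ p_2'=D_{B_{p_1}}\circ\sigma^{-1}\circ p_2=D_{B_{p_2}}\circ p_2=\widetilde{q}_{p_2},
\]
and applying $f$ yields $q_{p_2'}=f\left(\widetilde{q}_{p_2'}\right)=f\left(\widetilde{q}_{p_2}\right)=q_{p_2}$. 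I expect this verification to be the main (in fact only) obstacle, though once the coherence property is invoked it is a short computation; the real work was done earlier in arranging the $D_{B,i}$ to respect conjugacy.

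With $q_{p_2'}=q_{p_2}$ in hand, the hypothesis that $q_{p_1}$ and $q_{p_2}$ are conjugates over $A$ gives that $q_{p_1}$ and $q_{p_2'}$ are conjugates over $A$. Since $B_{p_1}=B_{p_2'}$, Corollary \ref{cor:elementary conjugation} applies and shows that $p_1$ and $p_2'$ are elementarily conjugates over $A$; thus there is an elementary permutation $\pi:A\to A$ with $p_1=\pi\ast p_2'$. Finally I would unwind the definitions: using $p_2'=\sigma^{-1}\circ p_2$ together with the commutation identity $\sigma^{-1}\circ\left(\pi\ast p_2\right)=\pi\ast\left(\sigma^{-1}\circ p_2\right)$ from the remark following Definition \ref{def:generalized types}, we obtain
\[
p_1=\pi\ast p_2'=\pi\ast\left(\sigma^{-1}\circ p_2\right)=\sigma^{-1}\circ\left(\pi\ast p_2\right).
\]
Since $\sigma^{-1}$ is a partial Boolean isomorphism whose domain $B_{p_2}$ contains the image of $\pi\ast p_2$, this exhibits $p_1=\sigma^{-1}\circ\left(\pi\ast p_2\right)$ as the required witness that $p_1$ and $p_2$ are conjugates over $A$.
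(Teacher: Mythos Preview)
Your proof is correct and is essentially the same as the paper's: both reduce to Corollary \ref{cor:elementary conjugation} by using the conjugacy isomorphism $\sigma$ to align the image algebras, and both rely on the coherent choice of the $D_{B,i}$ to check that the associated $\widetilde{q}$'s (hence $q$'s) are unchanged. The only cosmetic difference is that the paper applies $\sigma$ to $p_1$ (so that $B_{\sigma\circ p_1}=B_{p_2}$) whereas you apply $\sigma^{-1}$ to $p_2$; the computations are mirror images of one another.
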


\begin{proof}
Suppose $\sigma:B_{p_{1}}\to B_{p_{2}}$ takes $\mathfrak{a}_{B_{p_{1}},i}$
to $\mathfrak{a}_{B_{p_{2}},i}$. Then $\sigma\circ p_{1}$ and $p_{2}$
satisfy the condition of Corollary \ref{cor:elementary conjugation}:
note that $B_{\sigma\circ p_{1}}=Im\left(\sigma\circ p_{1}\right)=\sigma\left(Im\left(p_{1}\right)\right)=\sigma\left(B_{p_{1}}\right)=B_{p_{2}}$.

Further, by the way we chose the $D_{B}$'s, $D_{B_{p_{1}}}=D_{B_{p_{2}}}\circ\sigma$
thus
\[
\widetilde{q}_{p_{1}}=D_{B_{p_{1}}}\circ p_{1}=D_{B_{p_{2}}}\circ\sigma\circ p_{1}=D_{B_{\sigma\circ p_{1}}}\circ\sigma\circ p_{1}=\widetilde{q}_{\sigma\circ p_{1}},
\]

and thus $q_{\sigma\circ p_{1}}=q_{p_{1}}$ and $q_{p_{2}}$ are conjugates.

So for some elementary permutation $\pi:A\to A$, $\pi*\left(\sigma\circ p_{1}\right)=p_{2}$

\end{proof}
\begin{cor}
\label{cor:counting}Assume $\mathcal{B}$ has $\kappa$-c.c. and
$T$ has $NIP$, and let $\mu=2^{<\kappa}+\left|T\right|+\left|x\right|$.

The number of types in $\BtypesA$ up to conjugation is bounded by
the number of types in $S^{\left|x\right|\cdot\mu}\left(A\right)$
(equivalently $S^{\mu}\left(A\right)$) up to conjugation + $2^{\mu}$.
Hence, if $\lambda,\varkappa$ are cardinals and $\alpha$ and ordinal
such that $\lambda=\lambda^{<\lambda}=\aleph_{\alpha}=\varkappa+\alpha\geq\varkappa\geq\beth_{\omega}+\mu^{+}$,
and if $A=M$ is a saturated model of size $\lambda$, this number
is bounded by $2^{<\varkappa}+\left|\alpha\right|^{\mu}+2^{\mu}=2^{<\varkappa}+\left|\alpha\right|^{\mu}$.
\end{cor}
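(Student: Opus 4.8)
The plan is to combine the encoding $p\mapsto\left(B_{p},q_{p}\right)$ assembled in the preceding corollaries with a crude count of the possible enumerated image algebras, and then feed the resulting bound on real types into the third author's counting theorem. First I would read off from Corollary \ref{cor:counting B types} exactly the injectivity we need: it says that if two Boolean types $p_{1},p_{2}\in\BtypesA$ yield the same pair consisting of the conjugacy class of the enumerated image algebra $B_{p}$ and the conjugacy class of the real type $q_{p}\in S^{\left|x\right|\cdot\mu}\left(A\right)$, then $p_{1}$ and $p_{2}$ are already conjugate over $A$. Consequently distinct conjugacy classes of Boolean types give rise to distinct such pairs, and the number of conjugacy classes of Boolean types is at most the product of the number of conjugacy classes of the algebras $B_{p}$ with the number of conjugacy classes of the real types $q_{p}$.

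Next I would bound the first factor. By Proposition \ref{prop:image not too big} every image algebra $B_{p}$ lies in $\mathcal{A}$, i.e.\ is a subalgebra of $\mathcal{B}$ of size at most $\mu=2^{<\kappa}+\left|T\right|+\left|x\right|$ carrying a fixed length-$\mu$ enumeration of $B_{p}^{+}$. Two such are conjugate precisely when the structures (a Boolean algebra together with its distinguished $\mu$-sequence) are isomorphic, so the number of conjugacy classes is at most the number of isomorphism types of Boolean algebras on a set of size $\le\mu$ equipped with a length-$\mu$ enumeration; coding both the Boolean operations and the enumeration as functions on $\mu$, this number is at most $2^{\mu}$. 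Since both factors are infinite cardinals, their product equals their maximum and hence their sum, so the number of Boolean types up to conjugation is at most the number of $q$'s up to conjugation plus $2^{\mu}$, where the $q$'s range over $S^{\left|x\right|\cdot\mu}\left(A\right)$. As $\mu\ge\left|x\right|$ is infinite we have $\left|x\right|\cdot\mu=\mu$, which gives the equivalence with $S^{\mu}\left(A\right)$ and establishes the first assertion.

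For the \textquotedblleft hence\textquotedblright{} clause I would specialize to $A=M$ saturated of size $\lambda$ and invoke the third author's counting theorem \cite[Theorem 5.21]{Sh950}. Under the hypotheses $\lambda=\lambda^{<\lambda}=\aleph_{\alpha}=\varkappa+\alpha$ with $\varkappa\ge\beth_{\omega}+\mu^{+}$, that theorem bounds the number of types in $\mu$ variables over the saturated model $M$ up to conjugation by $2^{<\varkappa}+\left|\alpha\right|^{\mu}$. Substituting this bound for the $q$-count in the previous paragraph, and noting that $\varkappa\ge\mu^{+}$ forces $\mu<\varkappa$ and hence $2^{\mu}\le2^{<\varkappa}$, the additive term $2^{\mu}$ is absorbed and we obtain the stated bound $2^{<\varkappa}+\left|\alpha\right|^{\mu}$.

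The routine ingredients here are the cardinal bookkeeping ($\left|x\right|\cdot\mu=\mu$, infinite products collapsing to sums, and $2^{\mu}\le2^{<\varkappa}$) together with the coding argument giving the factor $2^{\mu}$. The genuine content, and the main obstacle if one does not treat it as a black box, is the correct application of \cite[Theorem 5.21]{Sh950}: one must verify that the packaged cardinal-arithmetic hypothesis $\lambda=\lambda^{<\lambda}=\aleph_{\alpha}=\varkappa+\alpha\ge\varkappa\ge\beth_{\omega}+\mu^{+}$ is precisely what that theorem demands of the parameters, so that it applies to the real types $q_{p}$ in $\mu$ variables and delivers the bound $2^{<\varkappa}+\left|\alpha\right|^{\mu}$.
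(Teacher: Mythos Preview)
Your proposal is correct and follows essentially the same approach as the paper: the paper also maps $p$ to the pair consisting of the conjugacy class of $q_{p}$ together with the quantifier-free type of the enumerated algebra $B_{p}$ (which is exactly your ``conjugacy class of the enumerated image algebra''), bounds the latter by $2^{\mu}$, invokes Corollary~\ref{cor:counting B types} for injectivity on conjugacy classes, and then cites \cite[Theorem 5.21]{Sh950} for the second clause. Your write-up supplies slightly more of the routine cardinal bookkeeping (product $=$ sum, $2^{\mu}\le 2^{<\varkappa}$) than the paper does, but the argument is the same.
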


\begin{proof}
Given $p$, map it to the pair consisting of the conjugacy class of
$q_{p}$ and the quantifier free type of the algebra $B_{p}$, enumerated
by $\sequence{\mathfrak{a}_{B_{p},i}}{i<\mu}$ (whose number is bounded
by $2^{\mu}$, since the language of Boolean algebras is finite).
By Corollary \ref{cor:counting B types}, the preimage of an element
under this map is contained in a single conjugacy class. The second
part of the statement follows immediately from Theorem 5.21 in \cite{Sh950},
since $\mu\geq\left|T\right|$. 
\end{proof}

\subsubsection{Counting types up to image conjugation}

One may wonder whether we can get a meaningful bound for $\left|S_{\mathcal{B}}^{x}\left(A\right)\right|$
(without taking elementary conjugation into consideration). For example,
the universal property of the product gives us a simple equation:
$\left|S_{2^{\mu}}^{x}\left(A\right)\right|=\left|S^{x}\left(A\right)\right|^{\mu}$. 

Note that if $\sigma:B\xrightarrow{\sim}B'$ is an isomorphism between
two different subalgebras of $\mathcal{B}$ and $p\in S_{\mathcal{B}}^{x}\left(A\right)$
is such that $Im\left(p\right)=B$, then $\sigma\circ p\in S_{\mathcal{B}}^{x}\left(A\right)$
is different from $p$. This means that if $\mathcal{B}$ has many
copies of small subalgebras then we necessarily have at least as many
$\mathcal{B}$ types. Thus if we want to give a bound that is independent
of the size of $\mathcal{B}$, we must restrict ourselves to counting
up to image conjugation. Note that $2$ has only a single subalgebra
(itself) and a single partial isomorphism (the identity), so for 2-types
(i.e., classical types) counting types up to image conjugation is
the same as just counting types.
\begin{cor}
Assume $\mathcal{B}$ has $\kappa$-c.c and $T$ has $NIP$, and assume
$\left|A\right|\geq\aleph_{0}$. We have the following bounds on $\lambda$,
the number of types in $S_{\mathcal{B}}^{x}\left(A\right)$ up to
image conjugation, where $\mu=2^{<\kappa}+\left|T\right|+\left|x\right|$.

If $T$ is stable, $\lambda\leq\left|A\right|^{\mu}$; 

If $T$ is NIP, $\lambda\leq\left(ded\left|A\right|\right)^{\mu}$,
where $ded\,\theta$ is the supremum on the number of cuts on a linearly
ordered set of cardinality $\leq\theta$.

If $T$ has IP and $x$ is finite then $\lambda$ could be maximal,
i.e., $\sup\set{\left|S_{\Bb}^{x}\left(A\right)\right|}{\left|A\right|\leq\varkappa}=\left|\Bb\right|^{\varkappa}$
for all $\varkappa\geq\left|\mathcal{B}\right|+\left|T\right|$. 
\end{cor}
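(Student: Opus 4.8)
The statement has three separate parts, and the plan is to attack them with three different techniques corresponding to the three dividing lines (stable / NIP / IP). The unifying idea throughout is the factorization established in Proposition \ref{prop:image not too big}: since $\mathcal{B}$ has $\kappa$-c.c.\ and $T$ is NIP, every $p \in S_{\mathcal{B}}^{x}\left(A\right)$ has image $B_{p}$ of size at most $\mu$. Up to image conjugation, a type $p$ is therefore determined by the pair consisting of the isomorphism type of the abstract algebra $B_{p}$ (of size $\leq \mu$, so there are at most $2^{\mu}$ such) together with the data of which element of $B_{p}$ each formula is sent to --- but more usefully, by Example \ref{ex:set-algebra-types} and the machinery of Subsection \ref{subsec:count-conjugation}, one encodes $p$ via a homomorphism into a power $2^{\mu}$ and hence as a sequence of $\mu$ classical complete types in $S^{x}\left(A\right)$. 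So the core estimate reduces to: how many sequences $\sequence{p_{i}}{i<\mu}$ of complete types can arise, counted appropriately.

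\textbf{The stable and NIP bounds.} For these two cases the plan is to combine the encoding $p \mapsto \widetilde{q}_{p} \in S_{2^{\mu}}^{x}\left(A\right) \cong \left(S^{x}\left(A\right)\right)^{\mu}$ with the classical counting theorems for the number of complete $1$-types (or $x$-types) over a set. First I would observe that a $\mathcal{B}$-type up to image conjugation is recovered from $\widetilde{q}_{p}$ together with the quantifier-free type of the enumerated algebra $B_{p}$, exactly as in the proof of Corollary \ref{cor:counting}, except that now we do \emph{not} quotient by the $\pi$-action --- so the relevant count is $\left|S^{x}\left(A\right)\right|^{\mu} \cdot 2^{\mu}$, i.e.\ $\left(\left|S^{x}\left(A\right)\right| + 2\right)^{\mu}$. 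Then I would invoke the standard bounds on $\left|S^{x}\left(A\right)\right|$: when $T$ is stable and $x$ finite the number of complete types over $A$ is at most $\left|A\right|^{\left|T\right|}$, and absorbing $\left|T\right|$ and $2^{<\kappa}$ into $\mu$ gives $\lambda \leq \left(\left|A\right|^{\mu}\right)^{\mu} = \left|A\right|^{\mu}$ (using $\mu \cdot \mu = \mu$ and $\left|A\right| \geq \aleph_{0}$); when $T$ is merely NIP the analogous bound is $\left|S^{x}\left(A\right)\right| \leq \ded\left|A\right|$, whence $\lambda \leq \left(\ded\left|A\right|\right)^{\mu}$, again after absorbing the $2^{\mu}$ factor since $\ded\left|A\right| \geq \left|A\right| \geq 2$. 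The routine point to check is that raising to the $\mu$-th power collapses the product of the various terms, which follows from cardinal arithmetic as $\mu$ is infinite.

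\textbf{The IP bound.} This is the part I expect to be the main obstacle, and here the strategy is entirely different: rather than bounding, I must \emph{construct} enough types to force maximality. The idea mirrors the Remark following Proposition \ref{prop:image not too big} and the random-graph example from the introduction. Since $\varphi\left(x,y\right)$ has IP for some formula, by compactness I can find, inside any sufficiently large $A$ of size $\varkappa$, a sequence $\sequence{b_{s}}{s < \varkappa}$ such that the family $\sequence{\varphi\left(x,b_{s}\right)}{s<\varkappa}$ is Boolean-independent in $L_{x}\left(A\right)$. Then, given \emph{any} function $g : \left\{b_{s}\right\} \to \mathcal{B}$, Fact \ref{fact:function-extension} extends $\varphi\left(x,b_{s}\right) \mapsto g\left(b_{s}\right)$ to a $\mathcal{B}$-type, because independence means no nontrivial finite Boolean relation constrains the values; this realizes $\left|\mathcal{B}\right|^{\varkappa}$ distinct types. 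Two such types are distinguished even up to image conjugation by keeping their values inside a \emph{rigid} region of $\mathcal{B}$ --- the remaining point is to argue that these $\left|\mathcal{B}\right|^{\varkappa}$ types fall into $\left|\mathcal{B}\right|^{\varkappa}$ distinct image-conjugacy classes, for which it suffices to note that a partial isomorphism $\sigma$ of $\mathcal{B}$ moving one type to another must respect the independence structure, and there are at most $\left|\mathcal{B}\right|^{\varkappa}$ of these to begin with, so the upper bound $\left|S_{\mathcal{B}}^{x}\left(A\right)\right| \leq \left|\mathcal{B}\right|^{\left|A\right|}$ (immediate since a type is a function on a set of $\leq \left|A\right| + \left|T\right| = \varkappa$ formula-classes) matches. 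The delicate step is ensuring the constructed types are genuinely pairwise non-image-conjugate rather than merely pairwise distinct; I would handle this by choosing the target values $g\left(b_{s}\right)$ to lie in a family that no nontrivial partial automorphism of the relevant subalgebra can permute, so that image conjugation collapses at most boundedly many of them and the supremum is unaffected.
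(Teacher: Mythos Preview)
For the stable and NIP bounds your plan agrees with the paper's. One small imprecision: the claim $\left|S^{x}\left(A\right)\right|\leq\ded\left|A\right|$ (resp.\ $\leq\left|A\right|$) is not correct as stated for an arbitrary tuple $x$; the right formulation is formula-by-formula, $\left|S_{\phi}\left(A\right)\right|\leq\ded\left|A\right|$ (resp.\ $\leq\left|A\right|$), and then $\left|S^{\mu}\left(A\right)\right|\leq\theta^{\mu}$ where $\theta$ is this per-formula bound and the exponent $\mu$ counts the formulas. Since you raise to the $\mu$-th power at the end anyway this comes out the same, so the argument survives.

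For the IP part there is a genuine gap, and the correct fix is much simpler than what you outline. Your plan is to build $\left|\mathcal{B}\right|^{\varkappa}$ types by freely assigning values $g\left(b_{s}\right)\in\mathcal{B}$ on an independent family and then to separate them up to image conjugation by landing in a ``rigid region'' of $\mathcal{B}$. That step is never made precise, and for a general $\mathcal{B}$ there is no reason such a region exists: partial automorphisms of $\mathcal{B}$ can identify many of your $p_{g}$'s, and you give no mechanism to control the size of the resulting classes. (You also invoke Fact~\ref{fact:function-extension}, which requires $\mathcal{B}$ complete --- an assumption not available here.)

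The observation you are missing is arithmetic: since $\varkappa\geq\left|\mathcal{B}\right|$ we have $\left|\mathcal{B}\right|^{\varkappa}\leq\varkappa^{\varkappa}=2^{\varkappa}\leq\left|\mathcal{B}\right|^{\varkappa}$, so the target is just $2^{\varkappa}$. Now the rigid subalgebra is staring at you: the two-element algebra $2\hookrightarrow\mathcal{B}$ admits exactly one embedding, so any two distinct classical types in $S^{x}\left(A\right)=S_{2}^{x}\left(A\right)\hookrightarrow S_{\mathcal{B}}^{x}\left(A\right)$ are automatically non-image-conjugate. Since $T$ has IP there is $A$ of size $\varkappa$ with $\left|S^{x}\left(A\right)\right|=2^{\varkappa}$, and this already gives $2^{\varkappa}=\left|\mathcal{B}\right|^{\varkappa}$ pairwise non-image-conjugate $\mathcal{B}$-types --- no completeness hypothesis and no rigidity argument needed. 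This is exactly what the paper does.
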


\begin{proof}
The proof of Corollary \ref{cor:counting B types} shows also that
if $p_{1},p_{2}\in\BtypesA$, $B_{p_{1}}$ and $B_{p_{2}}$ are conjugates
and $q_{p_{1}}=q_{p_{2}}$, then $p_{1}$ and $p_{2}$ are image conjugates. 

We conclude that the number of $\mathcal{B}$ types over $A$ up to
image conjugation is at most $\left|S^{\mu}\left(A\right)\right|$.

Let $\phi\left(x,y\right)$ some partitioned formula, and denote by
$S_{\phi}\left(A\right)$ the set of $\phi$-types, that is, a maximal
consistent set of formulas of the form $\phi\left(x,b\right)$ or
$\neg\phi\left(x,b\right)$ where $b$ is a $y$-tuple in $A$.

For $\theta=\sup\left\{ \left|S_{\phi}\left(A\right)\right|\mid\phi\left(x,y\right)\right\} $,
$\left|S^{\mu}\left(A\right)\right|\leq\theta{}^{\mu}$, since the
number of formulas in $\mu$ variables is at most $\mu+\left|T\right|=\mu$,
and the map $p\mapsto\left\langle p|_{\phi}\right\rangle _{\phi}$
is injective.

According to \cite[Proposition 2.69]{pierrebook}, if $T$ has NIP
then $\theta\leq ded\left|A\right|$.

Further, by the preceding remarks there, if $T$ is stable then $\theta\leq\left|A\right|$
thus $\lambda\leq\left|A\right|^{\mu}$ for stable $T$.

Assume that $T$ has IP. Note that $2^{\varkappa}\leq\left|B\right|^{\varkappa}\leq\varkappa^{\varkappa}=2^{\varkappa}$.
If $\phi\left(x,y\right)$ has IP then there is some model $M\models T$
of size $\varkappa$ such that $\left|S_{\phi}\left(M\right)\right|=2^{\left|M\right|}$.
Note that the algebra $2$ is embedded in $\mathcal{B}$ so $S_{2}^{x}\left(M\right)$
embeds into $S_{\mathcal{B}}^{x}\left(M\right)$. Finally any two
of these types are not image conjugates since there is only one embedding
of $2$ to $\Bb$.
\end{proof}

\subsection{\label{subsec:Smooth-Boolean-types}Smooth Boolean types}

In this section we define the notion of a smooth Boolean type analogously
to a smooth Keisler measure: these are Boolean types which have a
unique extension to every larger parameter set. The main result of
this section is that every Boolean type in a NIP theory can be extended
to a smooth one (see Corollary \ref{cor:smooth-extension}). This
mirrors a similar result for Keisler measures (see \cite[Proposition 7.9]{pierrebook}),
which we recover later in subsection \ref{subsec:Smooth-Keisler-Measures}.
We also discuss the stronger notion of a realized Boolean type.
\begin{defn}
Let $A\subseteq B$, $p\in S_{\cal B}^{x}\left(A\right)$ and $q\in S_{\cal B}^{x}\left(B\right)$;
then $q$ \emph{extends} $p$ if it extends it as a function, that
is for any formula $\varphi\left(x,a\right)$ over $A$, $p\left(\varphi\left(x,a\right)\right)=q\left(\varphi\left(x,a\right)\right)$
(technically, the images of the equivalence classes are the same).

We say that $p$ is \emph{smooth} if for every such $B$ there exists
a unique $\mathcal{B}$-type $q$ over $B$ extending $p$.
\end{defn}

\begin{rem}
If $\mathcal{B}=2$ and $A=M$ is a model, a type is smooth iff it
is realized, that is equal to $\tp\left(a/M\right)$, for some tuple
$a$ in $M$.
\end{rem}

\begin{rem}
\label{rem:smooth-set-algebra}As we remarked in Example \ref{ex:set-algebra-types},
$p\in S_{2^{\kappa}}^{x}\left(A\right)$ is essentially equivalent
to a sequence $\left\langle p_{i}\right\rangle _{i<\kappa}$ of complete
types via $p\left(\varphi\right)_{i}=1\Longleftrightarrow\varphi\in p_{i}$;
it is obvious that $q$ extends $p$ iff $q_{i}$ extends $p_{i}$
for all $i$; and thus $p$ is smooth iff $p_{i}$ is smooth for each
$i$. 
\end{rem}

Assume $\left|x\right|<\omega$ (i.e., $x$ is a finite tuple). Then
this case gives rise naturally to the following definition:
\begin{defn}
A $\mathcal{B}$-type $p\in S_{\cal B}^{x}\left(M\right)$ over a
model $M$ is called \emph{realized}\textbf{ }if $\stackrel[a\in M]{}{\sum}p\left(x=a\right)$
exists and equals $1$. (Here and later, when we write $a\in M$ in
the sum, we mean $a\in M^{x}$.)
\end{defn}

\begin{rem}
\label{rem:set-algebra-realized}If $\mathcal{B}=2$, then $\stackrel[a\in M]{}{\sum}p\left(x=a\right)=1$
iff there exists $a\in M$ such that $p\left(x=a\right)=1$. Therefore
this definition agrees with the classical one for complete types.

If $\mathcal{B}=2^{\kappa}$, again by Example \ref{ex:set-algebra-types},
$p\in S_{\mathcal{B}}^{x}\left(M\right)$ corresponds to a sequence
of complete types and 
\begin{align*}
 & \stackrel[a\in M]{}{\sum}p\left(x=a\right)=1\Longleftrightarrow\forall i<\kappa\left(\stackrel[a\in M]{}{\sum}p\left(x=a\right)\right)_{i}=1\Longleftrightarrow\\
 & \forall i<\kappa\exists a\in M\left(x=a\right)\in p_{i}.
\end{align*}

That is $p$ is realized iff every $p_{i}$ is realized (in $M$).

Therefore, by Remark \ref{rem:smooth-set-algebra}, in this case $p$
is smooth iff every $p_{i}$ is smooth iff every $p_{i}$ is realized
iff $p$ is realized.
\end{rem}

One direction of the last remark works in general:
\begin{claim}
Assume $p\in S_{\mathcal{B}}^{x}\left(M\right)$ is realized. Then
$p$ is smooth.
\end{claim}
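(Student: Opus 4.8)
The plan is to prove the two halves of ``there exists a unique extension'' separately, with uniqueness carrying the real content. Write $\mathfrak{a}_{a}=p\left(x=a\right)$ for $a\in M^{x}$. Since $x=a$ and $x=a'$ are inconsistent for $a\neq a'$, the family $\set{\mathfrak{a}_{a}}{a\in M^{x}}$ is pairwise disjoint, and the hypothesis that $p$ is realized says exactly that $\sum_{a}\mathfrak{a}_{a}=1$; that is, these elements form a partition of unity whose supremum is $1$. The guiding intuition is that a realized type is concentrated on the realized points, so the value of any extension on an arbitrary formula ought to be forced to equal the join of the $\mathfrak{a}_{a}$ over those $a$ satisfying it.

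For uniqueness, fix $B\supseteq M$ and suppose $q,q'\in S_{\Bb}^{x}\left(B\right)$ both extend $p$. Fix a formula $\psi\left(x,b\right)$ with $b\in B$, and abbreviate $q\left(\psi\right)=q\left(\psi\left(x,b\right)\right)$, similarly for $q'$. Since $q,q'$ agree with $p$ on $L_{x}\left(M\right)$, we have $q\left(x=a\right)=q'\left(x=a\right)=\mathfrak{a}_{a}$ for every $a\in M^{x}$. Now split $M^{x}$ according to whether $\C\models\psi\left(a,b\right)$: if $\C\models\psi\left(a,b\right)$ then $\left(x=a\right)\leq\psi\left(x,b\right)$ in $L_{x}\left(B\right)$, so applying the homomorphisms gives $\mathfrak{a}_{a}\leq q\left(\psi\right)$ and $\mathfrak{a}_{a}\leq q'\left(\psi\right)$; if $\C\models\neg\psi\left(a,b\right)$ then symmetrically $\mathfrak{a}_{a}\leq-q\left(\psi\right)$ and $\mathfrak{a}_{a}\leq-q'\left(\psi\right)$. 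In either case $\mathfrak{a}_{a}\cdot\left(q\left(\psi\right)-q'\left(\psi\right)\right)=0$, recalling $q\left(\psi\right)-q'\left(\psi\right)=q\left(\psi\right)\cdot\left(-q'\left(\psi\right)\right)$: when $\C\models\psi\left(a,b\right)$ the inequality $\mathfrak{a}_{a}\leq q'\left(\psi\right)$ forces $\mathfrak{a}_{a}\cdot\left(-q'\left(\psi\right)\right)=0$, while when $\C\models\neg\psi\left(a,b\right)$ the inequality $\mathfrak{a}_{a}\leq-q\left(\psi\right)$ forces $\mathfrak{a}_{a}\cdot q\left(\psi\right)=0$. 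Thus $q\left(\psi\right)-q'\left(\psi\right)$ is disjoint from every $\mathfrak{a}_{a}$, i.e.\ $\mathfrak{a}_{a}\leq-\left(q\left(\psi\right)-q'\left(\psi\right)\right)$ for all $a$. Since $\sum_{a}\mathfrak{a}_{a}=1$ is the \emph{least} upper bound, we get $1\leq-\left(q\left(\psi\right)-q'\left(\psi\right)\right)$, so $q\left(\psi\right)-q'\left(\psi\right)=0$; by symmetry $q'\left(\psi\right)-q\left(\psi\right)=0$ as well, whence $q\left(\psi\right)=q'\left(\psi\right)$. As $\psi$ was arbitrary, $q=q'$.

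For existence, which is the only place completeness enters, one can invoke Sikorski's theorem (Fact \ref{fact:function-extension}) to extend $p$ from $L_{x}\left(M\right)$ to some homomorphism on $L_{x}\left(B\right)$; alternatively one sets $q\left(\psi\left(x,b\right)\right)=\sum\set{\mathfrak{a}_{a}}{a\in M^{x},\ \C\models\psi\left(a,b\right)}$ and checks directly that this is a well-defined homomorphism extending $p$.

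The step I expect to be delicate is precisely this interaction with suprema: in a general Boolean algebra neither the full infinite distributive law nor the existence of arbitrary sub-sums of the partition $\set{\mathfrak{a}_{a}}{a\in M^{x}}$ is available, so the explicit formula for $q$ genuinely needs completeness (or at least that these particular sub-joins exist). The virtue of the uniqueness argument above is that it sidesteps both pitfalls: it never forms a proper sub-sum of the $\mathfrak{a}_{a}$, using only that their \emph{total} join is $1$ together with its defining least-upper-bound property, so it goes through even when $\Bb$ is not complete.
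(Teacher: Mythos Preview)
Your uniqueness argument is correct and takes a slightly different route from the paper's. The paper computes $q\left(\psi\right)$ explicitly: it invokes the one-sided distributive law $q\left(\psi\right)\cdot\sum_{a}\mathfrak{a}_{a}=\sum_{a}q\left(\psi\right)\cdot\mathfrak{a}_{a}$ (valid whenever the sum on the left exists, cited from Koppelberg), rewrites each summand as $q\left(\psi\wedge\left(x=a\right)\right)$, and observes that this is $\mathfrak{a}_{a}$ or $0$ according to whether $\C\models\psi\left(a,b\right)$, arriving at the closed form $q\left(\psi\right)=\sum_{a:\,\C\models\psi\left(a,b\right)}\mathfrak{a}_{a}$. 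You instead compare two putative extensions directly and show that $q\left(\psi\right)-q'\left(\psi\right)$ is disjoint from every $\mathfrak{a}_{a}$, using only the least-upper-bound property of $\sum_{a}\mathfrak{a}_{a}=1$. Your argument is marginally more elementary in that it never invokes the distributive identity; the paper's argument has the compensating advantage of producing the explicit formula for the unique extension as a byproduct (and of showing, in passing, that these particular sub-sums do exist whenever \emph{some} extension exists).

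On existence you are more careful than the paper: the paper's proof, like your uniqueness half, only assumes an extension $q$ and shows it is determined by $p$; existence is not addressed there either. Your remark that the explicit formula (or Sikorski) genuinely needs completeness, and that the uniqueness half does not, is accurate.
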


\begin{proof}
Let $\mathfrak{b}_{a}=p\left(x=a\right)\in\mathcal{B}$ for each $x$-tuple
$a$ in $M$. Assume $q\in S_{\mathcal{B}}^{x}\left(N\right)$ extends
$p$. Let $\varphi\left(x,c\right)\in L_{x}\left(N\right)$ be some
formula. 

Then by \cite[Lemma 1.33b]{MR991565}, since $\stackrel[a\in M]{}{\sum}\mathfrak{b}_{a}$
exists and equals $1$ by assumption,
\begin{align*}
q\left(\varphi\left(x,c\right)\right) & =q\left(\varphi\left(x,c\right)\right)\cdot\stackrel[a\in M]{}{\sum}\mathfrak{b}_{a}=\stackrel[a\in M]{}{\sum}q\left(\varphi\left(x,c\right)\right)\cdot\mathfrak{b}_{a}=\\
 & \stackrel[a\in M]{}{\sum}q\left(\varphi\left(x,c\right)\right)\cdot q\left(x=a\right)=\stackrel[a\in M]{}{\sum}q\left(x=a\wedge\varphi\left(x,c\right)\right).
\end{align*}

And in particular the RHS exists.

But for any $a\in M$, if $\mathfrak{C}\vDash\varphi\left(a,c\right)$
then $\left(x=a\wedge\varphi\left(x,c\right)\right)=\left(x=a\right)$
(as definable sets) thus $q\left(x=a\wedge\varphi\left(x,c\right)\right)=q\left(x=a\right)=\mathfrak{b}_{a}$,
while if $\mathfrak{C}\vDash\neg\varphi\left(a,c\right)$ then $\left(x=a\wedge\varphi\left(x,c\right)\right)=\bot$
thus $q\left(x=a\wedge\varphi\left(x,c\right)\right)=0$.

Thus we get necessarily (since the supremum never changes when adding
or removing $0$'s)
\[
q\left(\varphi\left(x,c\right)\right)=\stackrel[a\in\varphi\left(M,c\right)]{}{\sum}\mathfrak{b}_{a}.
\]

That is, we get $q$ is uniquely determined.
\end{proof}
One may naturally ask if every smooth type is realized. We start with
the following result:
\begin{claim}
\label{claim:smooth-implies-maximality} Assume $p\in S_{\mathcal{B}}^{x}\left(M\right)$
is smooth for $\mathcal{B}$ a complete Boolean algebra. Then ${\stackrel[a\in M]{}{\sum}p\left(x=a\right)}$
is \emph{maximal}: for any extension $q\in S_{\mathcal{B}}^{x}\left(N\right)$
of $p$, 
\[
\stackrel[a\in N]{}{\sum}q\left(x=a\right)=\stackrel[a\in M]{}{\sum}p\left(x=a\right).
\]
\end{claim}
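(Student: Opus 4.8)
Let $\mathfrak{s}_M = \sum_{a \in M} p(x=a)$ (which exists since $\mathcal{B}$ is complete), and let $q \in S_{\mathcal{B}}^x(N)$ be any extension of $p$ to some $N \succ M$. Set $\mathfrak{s}_N = \sum_{a \in N} q(x=a)$. Since $q$ extends $p$ and the $M$-sum is a sub-sum of the $N$-sum, we automatically have $\mathfrak{s}_M \leq \mathfrak{s}_N$. The content of the claim is the reverse inequality $\mathfrak{s}_N \leq \mathfrak{s}_M$, and the only hypothesis available to force it is smoothness — the uniqueness of $q$. So the plan is to argue by contradiction: assuming $\mathfrak{s}_N \neq \mathfrak{s}_M$, I would construct two \emph{distinct} extensions of $p$ to $N$, violating smoothness.

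\textbf{Constructing a second extension via Sikorski.} Suppose $\mathfrak{s}_N \nleq \mathfrak{s}_M$, and put $\mathfrak{b} = \mathfrak{s}_N - \mathfrak{s}_M \in \mathcal{B}^+$. The idea is that the ``mass'' $\mathfrak{b}$ sits on realizations $q(x=a)$ for $a \in N \setminus M$, and on such points we should have freedom to reassign values. Concretely, I would try to build a homomorphism $q' : L_x(N) \to \mathcal{B}$ that agrees with $q$ on $L_x(M)$ (so it still extends $p$) but differs from $q$ on some formula, thereby giving two extensions of the smooth type $p$. The natural tool is Fact \ref{fact:function-extension} (Sikorski) or Fact \ref{fact:homomorphism-extension}: I would fix some $a_0 \in N$ with $q(x=a_0) \cdot \mathfrak{b} \neq 0$ and attempt to define $q'(x=a_0)$ to be a strictly smaller or shifted value — say $0$ — while keeping all the $M$-data fixed, then extend to a full homomorphism. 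The existence condition in Fact \ref{fact:homomorphism-extension} would need to be checked: that the prescribed value lies between the relevant sup and inf of the already-determined part. The key fact enabling this is that the discrepancy lives \emph{off} the $M$-realizations, so altering the value of $q(x=a_0)$ within the ``new'' part $\mathfrak{b}$ does not conflict with any consistency condition coming from $M$-formulas.

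\textbf{The main obstacle.} The delicate point is ensuring that the modified value genuinely produces a \emph{different} homomorphism extending $p$, rather than being forced back to $q$ by some Boolean relation among the $N$-formulas. I would need to verify that the element $a_0$ can be chosen so that $q(x=a_0)$ is not already pinned down by the $M$-type together with the other $N$-data — that is, that there is real freedom at $a_0$. This is exactly where the hypothesis $\mathfrak{s}_N \neq \mathfrak{s}_M$ is used: it guarantees a positive element of mass sitting on points not captured by $M$, and on this piece the Sikorski extension criterion is satisfiable with more than one value. Once two distinct extensions $q, q'$ of $p$ are exhibited, smoothness is contradicted, so we conclude $\mathfrak{s}_N = \mathfrak{s}_M$.

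\textbf{Alternative cleaner route.} Rather than working with a single realization, I would more likely phrase this at the level of the whole element $\mathfrak{b}$: consider the relative algebra $\mathcal{B}|_{\mathfrak{b}}$ and the projection $\pi_{\mathfrak{b}}$, and observe that restricting to the ``region'' $\mathfrak{b}$ the behavior of $q$ on new parameters is constrained only below $\mathfrak{b}$, which is disjoint from the $M$-mass $\mathfrak{s}_M$. Modifying $q$ only within $\mathcal{B}|_{\mathfrak{b}}$ — for instance composing with a nontrivial automorphism of $\mathcal{B}|_{\mathfrak{b}}$ or reassigning a single value there — yields a homomorphism agreeing with $p$ on $M$ (since $p$'s image on equality formulas is $\leq \mathfrak{s}_M$, disjoint from $\mathfrak{b}$) but differing on $N$. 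I expect the bookkeeping of \emph{which} formula witnesses the difference, and the verification that $p$ really is untouched, to be the main thing requiring care; the existence of the competing extension itself follows from completeness of $\mathcal{B}$ together with Sikorski's theorem.
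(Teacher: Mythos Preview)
Your overall strategy is exactly the paper's: pick $a_0 \in N \setminus M$ with $q(x=a_0) > 0$ and use Sikorski (Fact~\ref{fact:homomorphism-extension}) to build a second extension $q'$ of $p$ with $q'(x=a_0) = 0$, contradicting smoothness. What you leave as ``the existence condition would need to be checked'' is in fact the only nontrivial step, and the paper dispatches it with a purely model-theoretic observation you do not mention: since $M \prec N$ and $a_0 \notin M$, no consistent formula $\varphi(x,b) \in L_x(M)$ can satisfy $\varphi(x,b) \rightarrow (x = a_0)$, for otherwise $a_0$ would be definable over $M$ and hence lie in $M$. Thus the Sikorski lower bound $\sum\{\,p(\varphi) : \varphi \in L_x(M),\ \varphi \leq (x=a_0)\,\}$ is $0$, and Fact~\ref{fact:homomorphism-extension} immediately yields $q'$ with $q'(x=a_0)=0$. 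Note this is simpler than what you outline: there is no need to work ``within $\mathfrak{b}$'' or to worry about the modified homomorphism being ``forced back to $q$'' --- once $q'(x=a_0)=0 \neq q(x=a_0)$ the two extensions are visibly distinct.

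Your alternative route has a genuine gap. You write that modifying $q$ only inside $\mathcal{B}|_{\mathfrak{b}}$ leaves $p$ untouched because ``$p$'s image on equality formulas is $\leq \mathfrak{s}_M$, disjoint from $\mathfrak{b}$.'' But $p$ has many values that are \emph{not} equality formulas and do meet $\mathfrak{b}$ (for instance $p(\top)=1 \geq \mathfrak{b}$), so an arbitrary perturbation inside $\mathcal{B}|_{\mathfrak{b}}$ will generally alter $p$. Making this route work amounts to asking whether the $\mathcal{B}|_{\mathfrak{b}}$-type $\pi_{\mathfrak{b}} \circ p$ has more than one extension to $N$, and establishing that brings you right back to the definable-closure argument above.
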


\begin{proof}
Assume otherwise. Since for any $a\in M$, $q\left(x=a\right)=p\left(x=a\right)$,
we have in particular some $a\in N\setminus M$ be such that $q\left(x=a\right)>0$.

However, there is always a type $q'\in S_{\mathcal{B}}^{x}\left(N\right)$
extending $p$ such that $q'\left(x=a\right)=0$. Indeed, for any
consistent $\varphi\left(x,b\right)\in L_{x}\left(M\right)$ it cannot
be $\varphi\left(x,b\right)\rightarrow x=a$, as that would imply
that $a$ is definable over $M$; but $M\prec N$ thus its definable
closure is itself.

Thus by Fact \ref{fact:homomorphism-extension}, there exists $q'$
as required. 

This means that if $q\left(x=a\right)>0$, $q$ and $q'$ are distinct
extensions of $p$, thus $p$ is not smooth.
\end{proof}
The property in Claim \ref{claim:smooth-implies-maximality} has an
alternative formulation which is somewhat easier to reason about:
\begin{claim}
\label{claim:atoms-in-maximal}Assume $\mathcal{B}$ is complete.
A type $p\in S_{\mathcal{B}}^{x}\left(M\right)$ has the property
that $\stackrel[a\in M]{}{\sum}p\left(x=a\right)$ is not maximal
iff there exists a subalgebra $B'$ of $\mathcal{B}$ such that $Im\left(p\right)\subseteq B'$
and an atom $\mathfrak{a}\in B'$ such that $\mathfrak{a}\leq-\stackrel[a\in M]{}{\sum}p\left(x=a\right)$.
\end{claim}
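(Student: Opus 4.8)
The plan is to write $\mathfrak{s}=\sum_{a\in M}p(x=a)$ and to prove both implications by passing back and forth between atoms of subalgebras containing $Im(p)$ and complete ($2$-valued) types over $M$ that omit every formula $x=a$ with $a\in M$. Recall from (the proof of) Claim \ref{claim:smooth-implies-maximality} that $\mathfrak{s}$ fails to be maximal exactly when some extension $q\in S_{\mathcal{B}}^{x}(N)$ of $p$ satisfies $\sum_{a\in N}q(x=a)>\mathfrak{s}$. This is the condition I would verify or produce in each direction.

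For the direction starting from an atom, suppose we are given $B'\supseteq Im(p)$ and an atom $\mathfrak{a}\in B'$ with $\mathfrak{a}\leq-\mathfrak{s}$. I would use the standard fact that for an atom the map $d(\mathfrak{b})=1\Longleftrightarrow\mathfrak{a}\leq\mathfrak{b}$ is a homomorphism $d:B'\to 2$. Then $r:=d\circ p$ is a complete type over $M$, and since $p(x=a)\leq\mathfrak{s}$ while $\mathfrak{a}\cdot\mathfrak{s}=0$, we get $x=a\notin r$ for every $a\in M$; realizing $r$ by some $b\in N\setminus M$ and invoking Fact \ref{fact:homomorphism-extension}, I would extend $p$ to $q$ on $L_x(N)$ with $q(x=b)=\mathfrak{a}$. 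The two inequalities demanded by that Fact are precisely the present data: the lower bound $\sum_{\varphi\leq(x=b)}p(\varphi)$ vanishes because $M$ is a model, so no consistent $\varphi\in L_x(M)$ defines $\{b\}$, and the upper bound $\prod_{\varphi\in r}p(\varphi)$ dominates $\mathfrak{a}$ by the definition of $d$. As $q(x=b)=\mathfrak{a}>0$ is disjoint from $\mathfrak{s}$, this gives $\sum_{a\in N}q(x=a)>\mathfrak{s}$, i.e.\ non-maximality.

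For the converse I would take an extension $q$ over $N$ with $\sum_{a\in N}q(x=a)>\mathfrak{s}$, locate $b\in N\setminus M$ with $\mathfrak{c}:=q(x=b)\cdot(-\mathfrak{s})\neq 0$, and set $r=\tp(b/M)$; note that $\mathfrak{c}\leq q(x=b)\leq p(\varphi)$ for every $\varphi\in r$. I would then let $B'$ be the complete subalgebra generated by $Im(p)$ and put $\mathfrak{a}=\prod_{\varphi\in r}p(\varphi)\in B'$. One checks at once that $\mathfrak{a}\geq\mathfrak{c}>0$, and, using $\neg(x=a)\in r$ for each $a\in M$ together with distributivity in the complete algebra $\mathcal{B}$, that $\mathfrak{a}\cdot\mathfrak{s}=0$, i.e.\ $\mathfrak{a}\leq-\mathfrak{s}$.

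The main obstacle is confirming that $\mathfrak{a}$ is an \emph{atom} of $B'$. My plan is to show that the set $\{\mathfrak{b}\in\mathcal{B}:\mathfrak{a}\leq\mathfrak{b}\text{ or }\mathfrak{a}\cdot\mathfrak{b}=0\}$ of elements ``decided'' by $\mathfrak{a}$ is a complete subalgebra of $\mathcal{B}$: closure under complements is immediate, and closure under arbitrary sums and products follows from distributivity, with $\mathfrak{a}\neq 0$ keeping the two alternatives mutually exclusive. Since $r$ decides each formula, every generator $p(\varphi)$ lies in this subalgebra, hence it contains $B'$; consequently any $\mathfrak{b}\in B'$ with $0<\mathfrak{b}\leq\mathfrak{a}$ satisfies $\mathfrak{a}\leq\mathfrak{b}$, forcing $\mathfrak{b}=\mathfrak{a}$. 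This atom/complete-type dictionary is the crux, and the only delicate point is checking that the infinite distributivity steps are licensed by completeness of $\mathcal{B}$, as in the cited \cite[Lemma 1.33b]{MR991565}.
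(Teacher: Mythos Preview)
Your argument is correct. The direction ``atom $\Rightarrow$ non-maximal'' is essentially identical to the paper's: both compose $p$ with the principal ultrafilter at $\mathfrak{a}$ to get an unrealized $2$-type, realize it by some $b\notin M$, and invoke Fact~\ref{fact:homomorphism-extension} to extend $p$ with $q(x=b)=\mathfrak{a}$.

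The other direction is where you diverge. The paper takes the witness $q\in S_{\mathcal B}^x(N)$ and $c\in N\setminus M$ with $q(x=c)>0$, then sets $B'=Im(q)$ and $\mathfrak{a}=q(x=c)$; the atom check is a one-line case split: any $\mathfrak{b}\in Im(q)$ is $q(\varphi)$ for some $\varphi$ over $N$, and $q(\varphi)\cdot q(x=c)$ is $q(x=c)$ or $0$ according to whether $c$ satisfies $\varphi$. You instead build the atom \emph{over $M$}: take $B'$ to be the complete subalgebra generated by $Im(p)$ and set $\mathfrak{a}=\prod_{\varphi\in\tp(b/M)}p(\varphi)$, then argue that the set of elements ``decided'' by $\mathfrak{a}$ is a complete subalgebra containing $Im(p)$, hence all of $B'$. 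This is a genuinely different (and slightly more structural) route. What the paper's choice buys is brevity and avoidance of any infinite distributivity in the atom verification; what yours buys is that $B'$ and $\mathfrak{a}$ are described purely in terms of $p$ and the classical type $\tp(b/M)$, without reference to values of $q$ outside $L_x(M)$, and the same ``decided-by-$\mathfrak{a}$'' trick gives a reusable characterization of atoms in completely generated subalgebras. One small simplification available to you: since $b\notin M$ forces $q(x=b)\leq -p(x=a)$ for every $a\in M$, in fact $q(x=b)\leq -\mathfrak{s}$ already, so your $\mathfrak{c}=q(x=b)\cdot(-\mathfrak{s})$ equals $q(x=b)$.
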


\begin{proof}
Assume $q$ extends $p$ and $\stackrel[a\in N]{}{\sum}q\left(x=a\right)>\stackrel[a\in M]{}{\sum}p\left(x=a\right)$
and let $c\in N\setminus M$ such that $q\left(x=c\right)>0$. Let
$B'=Im\left(q\right)$ and $\mathfrak{a}=q\left(x=c\right)$. 

Then $\mathfrak{a}$ must be an atom in $B'$, since if $\mathfrak{b}\leq\mathfrak{a}$
and $\mathfrak{b}\in B'$, let $\varphi\left(x,b\right)$ such that
$q\left(\varphi\left(x,b\right)\right)=\mathfrak{b}$. If $\mathfrak{C}\vDash\varphi\left(c,b\right)$
then $\varphi\left(x,b\right)\wedge x=c$ is the same as $x=c$ and
thus $\mathfrak{b}=\mathfrak{b}\cdot\mathfrak{a}=q\left(\varphi\left(x,b\right)\wedge x=c\right)=q\left(x=c\right)=\mathfrak{a}$;
similarly if $\mathfrak{C}\nvDash\varphi\left(c,b\right)$ then $\varphi\left(x,b\right)\wedge x=c$
is $\bot$ thus we likewise get $\mathfrak{b}=0$. Finally since $\mathfrak{a}$
is disjoint from $p\left(x=a\right)$ for any $a\in M$, $\mathfrak{a}\leq-\stackrel[a\in M]{}{\sum}p\left(x=a\right)$.

On the other hand, let $B'$ and $\mathfrak{a}\in B'$ an atom. Let
$D_{\mathfrak{a}}:B'\rightarrow2$ be the principal ultrafilter generated
by $\mathfrak{a}$ represented as a homomorphism (i.e. $D_{\mathfrak{a}}\left(\mathfrak{b}\right)=1\Longleftrightarrow\mathfrak{a}\leq\mathfrak{b}$)
and let $p'=D_{\mathfrak{a}}\circ p$ which is a complete type over
$M$. $p'$ is not realized in $M$: as $\mathfrak{a}\leq-\stackrel[a\in M]{}{\sum}p\left(x=a\right)$,
$D_{\mathfrak{a}}\left(\stackrel[a\in M]{}{\sum}p\left(x=a\right)\right)=0$
thus $p'\left(x=a\right)=D_{\mathfrak{a}}\left(p\left(x=a\right)\right)\leq D_{\mathfrak{a}}\left(\stackrel[a\in M]{}{\sum}p\left(x=a\right)\right)=0$
for any $a$ in $M$.

Let $c$ realize $p'$ outside of $M$, and let $N$ containing $c$
and $M$. Then for any $\varphi\left(x,b\right)\in L_{x}\left(M\right)$,
if $\varphi\left(x,b\right)\rightarrow x=c$ then $\varphi\left(x,b\right)=\bot$
(since $c$ cannot be definable over $M$) thus 
\[
p\left(\varphi\left(x,b\right)\right)=0\leq\mathfrak{a};
\]

and if $x=c\rightarrow\varphi\left(x,b\right)$ then $\mathfrak{C}\vDash\varphi\left(c,b\right)$
thus 
\[
D_{\mathfrak{\mathfrak{a}}}\left(p\left(\varphi\left(x,b\right)\right)\right)=p'\left(\varphi\left(x,b\right)\right)=1\Rightarrow\mathfrak{a}\leq p\left(\varphi\left(x,b\right)\right).
\]

Thus by Fact \ref{fact:homomorphism-extension}, $p$ can be extended
to a type $q$ over $N$ which satisfies $q\left(x=c\right)=\mathfrak{a}$
thus 
\[
\stackrel[a\in N]{}{\sum}q\left(x=a\right)>\stackrel[a\in M]{}{\sum}p\left(x=a\right)
\]

as required.
\end{proof}
As a corollary of Claim \ref{claim:atoms-in-maximal} we get:
\begin{rem}
\label{rem:non-realized} If $p\in S_{\cal B}^{x}\left(M\right)$
is onto and $\mathcal{B}$ atomless and complete, then $\stackrel[a\in M]{}{\sum}p\left(x=a\right)$
is maximal and in fact $p\left(x=a\right)=0$ for all $a$. 

On the other hand if $\mathcal{B}$ is atomic (that is there is an
atom under every positive element) then $\stackrel[a\in M]{}{\sum}p\left(x=a\right)$
is maximal iff $\stackrel[a\in M]{}{\sum}p\left(x=a\right)=1$ (that
is iff $p$ is realized).
\end{rem}

\begin{example}
Let $L=\left\{ E_{B}\right\} _{B\in\mathcal{B}\left(\mathbb{R}\right)}$
(one unary predicate $E_{B}$ for every Borel set in $\mathbb{R}$),
$T=Th_{L}\left(\mathbb{R}\right)$ (with the obvious interpretations)
and $\mathcal{B}$ the algebra $\mathcal{B}\left(\mathbb{R}\right)/I$
where $I=\left\{ B\in\mathcal{B}\left(\mathbb{R}\right)\mid\mu\left(B\right)=0\right\} $
where $\mu$ is the Lebesgue measure; $\mathcal{B}$ is a $\sigma$-complete
and c.c.c. --- thus complete --- as well as atomless.

Then $T$ proves that every Boolean combination of the unary predicates
is equivalent to single unary predicate, and by a standard argument
eliminates quantifiers. Thus for any $M\subseteq\mathbb{R}$, $L_{x}\left(M\right)$
is isomorphic to $\mathcal{B}\left(\mathbb{R}\right)$ with $E_{B}\left(x\right)\mapsto B$
($x=a$ is equivalent to $E_{\left\{ a\right\} }\left(x\right)$).
Let $p:L_{x}\left(M\right)\rightarrow\mathbb{\mathcal{B}}$ be the
projection.

We get that for any $q:L_{x}\left(N\right)\to\cal B$ extending $p$,
$q$ is a surjection to an atomless Boolean algebra; therefore it
sends atoms to $0$, that is $q\left(x=a\right)=0$ for all $a\in N$.
Thus since $x=y$ is the only atomic formula involving both $x$ and
a parameter, and since $T$ eliminates quantifiers, $q$ is uniquely
determined.

Thus $p$ is smooth, but not realized by Remark \ref{rem:non-realized}.
\end{example}

\begin{defn}
Let $p\in S_{\mathcal{B}}^{x}\left(M\right)$ a Boolean type and $\varphi\left(x,y\right)$
a formula. We say that the image of $p$ with respect to $\varphi$
is maximal, or that $Im\left(p|_{\varphi\left(x,y\right)}\right)$
is maximal, if for any $N\supseteq M$ and for any $q\in S_{\mathcal{B}}^{x}\left(N\right)$
extending $p$ we have $\left\{ q\left(x,b\right)\mid b\in N\right\} =\left\{ p\left(x,a\right)\mid a\in M\right\} $. 

If the image of $p$ is maximal with respect to every $\varphi$ we
say that the image of $p$ is maximal.
\end{defn}

The following proposition gives us a way to extend types in a way
that maximizes their images, in the following precise sense:
\begin{prop}
\label{prop:maximal-image}Assume $\mathcal{B}$ is a Boolean algebra.
Let $p\in S_{\cal B}^{x}\left(M\right)$ a $\mathcal{B}$-type over
$M$. Then there exists $N\supseteq M$ and a type $q$ over $N$
extending $p$ such that the image of $q$ is maximal.
\end{prop}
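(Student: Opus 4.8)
The plan is to build $N$ and $q$ by a transfinite closure construction: starting from $M$ and $p$, repeatedly pass to a larger model and a larger type whenever some formula's image can still be enlarged, and stop once no enlargement is possible. To make this precise, for a $\mathcal{B}$-type $r\in S_{\mathcal{B}}^{x}\left(N\right)$ and a formula $\varphi\left(x,y\right)$, call the set $\left\{ r\left(\varphi\left(x,b\right)\right)\mid b\in N\right\} $ the \emph{$\varphi$-image} of $r$, and call $\mathfrak{b}\in\mathcal{B}$ \emph{$\varphi$-achievable over $r$} if there are some $N'\supseteq N$, some $r'\in S_{\mathcal{B}}^{x}\left(N'\right)$ extending $r$, and some $b\in N'$ with $r'\left(\varphi\left(x,b\right)\right)=\mathfrak{b}$. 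The first thing I would record is the tautological reformulation of maximality: the image of $r$ is maximal with respect to $\varphi$ if and only if every $\varphi$-achievable value over $r$ already lies in the $\varphi$-image of $r$. Indeed, any value attained by an extension of $r$ is $\varphi$-achievable, and conversely every $\varphi$-achievable value is attained by some extension; so maximality with respect to $\varphi$ is exactly the statement that the $\varphi$-image cannot grow under extension.

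With this reformulation I would construct an increasing chain $\left\langle N_{\alpha},q_{\alpha}\right\rangle $ with $N_{0}=M$ and $q_{0}=p$. At a successor stage, if for some formula $\varphi$ there is a $\varphi$-achievable value $\mathfrak{b}$ over $q_{\alpha}$ not already in the $\varphi$-image of $q_{\alpha}$, I pick such a $\varphi$ and $\mathfrak{b}$ and let $N_{\alpha+1}\supseteq N_{\alpha}$, $q_{\alpha+1}\supseteq q_{\alpha}$ together with a witness $b\in N_{\alpha+1}$, $q_{\alpha+1}\left(\varphi\left(x,b\right)\right)=\mathfrak{b}$, be the data supplied by the definition of achievability; otherwise I stop. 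At a limit stage I set $N_{\lambda}=\bigcup_{\alpha<\lambda}N_{\alpha}$ and $q_{\lambda}=\bigcup_{\alpha<\lambda}q_{\alpha}$. Here I would check the routine point that $q_{\lambda}$ is a well-defined $\mathcal{B}$-type over $N_{\lambda}$: every formula over $N_{\lambda}$ uses finitely many parameters and so lies in $L_{x}\left(N_{\alpha}\right)$ for some $\alpha<\lambda$, the natural maps $L_{x}\left(N_{\alpha}\right)\to L_{x}\left(N_{\lambda}\right)$ are embeddings (non-equivalence in $\mathfrak{C}$ is absolute), and the $q_{\alpha}$ cohere along them, so their union is a homomorphism. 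Note that passing from $q_{\alpha}$ to any later $q_{\beta}$ can only enlarge the $\psi$-image for every formula $\psi$, so all $\psi$-images grow monotonically along the whole chain. Importantly, achievability never requires $\mathcal{B}$ to be complete: we only invoke the existence of extensions, never construct them, and if no extension of $q_{\alpha}$ exists then its image is already maximal.

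To see the construction terminates I would track the set $S\left(q_{\alpha}\right)=\left\{ \left(\varphi,\mathfrak{b}\right)\mid\mathfrak{b}\text{ lies in the }\varphi\text{-image of }q_{\alpha}\right\} $. By the monotonicity just noted $S\left(q_{\alpha}\right)$ is increasing, and each successor step adds at least the new pair $\left(\varphi,\mathfrak{b}\right)$, so the chain $\left\langle S\left(q_{\alpha}\right)\right\rangle $ is \emph{strictly} increasing at every successor. Since each $S\left(q_{\alpha}\right)$ is a subset of the fixed set of pairs (formula, element of $\mathcal{B}$), whose cardinality is at most $\lambda:=\left(\left|T\right|+\left|x\right|\right)\cdot\left|\mathcal{B}\right|$, there can be at most $\lambda$ successor stages, and the construction halts at some stage $\gamma<\lambda^{+}$. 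Setting $N=N_{\gamma}$ and $q=q_{\gamma}$, termination says precisely that no formula has a $\varphi$-achievable value over $q$ outside its $\varphi$-image; by the reformulation of the first paragraph this means the image of $q$ is maximal with respect to every $\varphi$, i.e. the image of $q$ is maximal, as required.

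The only genuinely delicate point is the bookkeeping behind termination: isolating an invariant that grows strictly at each successor and is uniformly bounded across all formulas simultaneously. Once $S\left(q_{\alpha}\right)$ is chosen this is a one-line cardinality count, and everything else — coherence of the union of types at limits and the tautological characterization of maximality — is routine.
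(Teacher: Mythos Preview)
Your proof is correct and follows essentially the same approach as the paper's: a transfinite closure construction that keeps extending the type to pick up new $(\varphi,\mathfrak{b})$ pairs until no more are achievable. The only organizational difference is that the paper enumerates all pairs $(\varphi_{i},\mathfrak{b}_{\alpha})$ in advance and sweeps through them in a fixed double recursion of length $\left(\left|T\right|+1\right)\times\left(\left|\mathcal{B}\right|+1\right)$, whereas you pick pairs greedily and bound the length via the strictly increasing invariant $S(q_{\alpha})$; both arguments ultimately rest on the same two observations (images grow monotonically along extensions, and achievability passes down to restrictions), so this is a cosmetic rather than substantive difference.
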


\begin{proof}
Let $\left\langle \varphi_{i}\left(x,y\right)\mid i<\left|T\right|\right\rangle $
an enumeration of all partitioned formulas (recall that we are assuming
that $x$ is a finite tuple) and let $\left\langle \mathfrak{b}_{\alpha}\mid\alpha<\left|\mathcal{B}\right|\right\rangle $
be an enumeration of the elements of $\mathcal{B}$.

We construct recursively two increasing sequences with respect to
the lexicographic order on $\left(\left|T\right|+1\right)\times\left(\left|\mathcal{B}\right|+1\right)$:

1. An increasing sequence of models $\left\langle M_{i,\alpha}\mid\alpha\leq\left|\mathcal{B}\right|,i\leq\left|T\right|\right\rangle $.

2. An increasing (with respect to extension) sequence of $\mathcal{B}$-types
$\left\langle p_{i,\alpha}\mid\alpha\leq\left|\mathcal{B}\right|,i\leq\left|T\right|\right\rangle $
such that $p_{i,\alpha}\in S_{\mathcal{B}}^{x}\left(M_{i,\alpha}\right)$.

The construction is as follows:

For $\left(0,0\right)$, $M_{0,0}=M,p_{0,0}=p$

Fix $i$ and assume we have $M_{i,\alpha},p_{i,\alpha}$ for $\alpha<\left|\mathcal{B}\right|$;
if there exist $M'$ and $p'$ over $M'$ such that $M_{i,\alpha}\subseteq M'$,
$p'$ extends $p$ and $\mathfrak{b}_{\alpha}\in Im\left(p'|_{\varphi_{i}}\right)$,
let $M_{i,\alpha+1}=M'$, $p_{i,\alpha+1}=p'$; otherwise let $M_{i,\alpha+1}=M_{i,\alpha}$,
$p_{i,\alpha+1}=p_{i,\alpha}$. Assume we have $M_{i,\alpha}$, $p_{i,\alpha}$
for all $\alpha<\delta\leq\left|\mathcal{B}\right|$ a limit ordinal;
then define $M_{i,\delta}=\stackrel[\alpha<\delta]{}{\bigcup}M_{i,\alpha}$
and $p_{i,\delta}=\stackrel[\alpha<\delta]{}{\bigcup}p_{i,\alpha}$.
Note that since $\left(p_{i,\alpha}\right)_{\alpha<\delta}$ is a
chain of homomorphism this is a well defined homomorphism.

Assume we have $M_{i,\left|\mathcal{B}\right|},p_{i,\left|\mathcal{B}\right|}$
for $i<\left|T\right|$ and let $M_{i+1,0},p_{i+1,0}=M_{i,\left|\mathcal{B}\right|},p_{i,\left|\mathcal{B}\right|}$.
Finally assume we have $M_{i,\left|\mathcal{B}\right|},p_{i,\left|\mathcal{B}\right|}$
for all $i<j\leq\left|T\right|$ a limit ordinal. Then define $M_{j,0}=\stackrel[i<j]{}{\bigcup}M_{i,\left|\mathcal{B}\right|}$
and $p_{j,0}=\stackrel[i<j]{}{\bigcup}p_{i,\left|\mathcal{B}\right|}$.

Now let $N=M_{\left|T\right|,\left|\mathcal{B}\right|},q=p_{\left|T\right|,\left|\mathcal{B}\right|}$.
Then if for any $\mathfrak{b}_{\alpha}$ and $\varphi_{i}$ we have
that $\mathfrak{b}_{\alpha}\in Im\left(q'|_{\varphi_{i}}\right)$
for some extension $q'$ of $q$ then the same holds for $q$: any
extension of $q$ is also an extension of $p_{i,\alpha}$ and thus
by construction we have $\mathfrak{b}_{\alpha}\in Im\left(p_{i,\alpha+1}|_{\varphi_{i}}\right)\subseteq Im\left(q|_{\varphi_{i}}\right)$.
\end{proof}
\begin{rem}
\label{rem:cardinality of maximal image}If $\mathcal{B}$ has $\kappa$-c.c.
and $T$ is dependent, then by Proposition \ref{prop:image not too big}
in Proposition \ref{prop:maximal-image} we can take $N$ to be of
cardinality $\leq\left|M\right|+2^{<\kappa}+\left|T\right|$, by choosing
a preimage $\varphi\left(x,b\right)$ for every element in $Im\left(q\right)$
and restricting $q$ to a structure containing $M$ and each of the
$b$'s (which, from L\"{o}wenheim-Skolem, we can take to be no larger
than $\left|M\right|+2^{<\kappa}+\left|T\right|$).
\end{rem}

\begin{cor}
\label{cor:maximal-realization}Suppose $\mathcal{B}$ is complete.
Then every $\mathcal{B}$-type $p\in S_{\cal B}^{x}\left(M\right)$
can be extended to a $\mathcal{B}$-type $q\in S_{\mathcal{B}}^{x}\left(N\right)$
such that $\stackrel[a\in N]{}{\sum}q\left(x=a\right)$ is maximal.
\end{cor}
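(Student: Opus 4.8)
The plan is to deduce this directly from Proposition~\ref{prop:maximal-image} applied to a single well-chosen formula. First I would invoke Proposition~\ref{prop:maximal-image} to extend $p$ to a type $q\in S_{\cal B}^{x}\left(N\right)$ over some $N\supseteq M$ whose image is maximal. The point is that maximality of the image holds with respect to \emph{every} partitioned formula $\varphi\left(x,y\right)$, and in particular with respect to the equality formula $\varphi\left(x,y\right)$ given by $x=y$ (equality of tuples, so $\left|y\right|=\left|x\right|$), which is one of the formulas enumerated in the proof of that proposition.

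The claim is then that for this $q$ the element $\sum_{a\in N}q\left(x=a\right)$ is maximal in the sense of Claim~\ref{claim:smooth-implies-maximality}. To see this, I would take an arbitrary extension $q'\in S_{\cal B}^{x}\left(N'\right)$ of $q$ and compute $\sum_{a\in N'}q'\left(x=a\right)$. The key observation is that a supremum in a Boolean algebra depends only on the \emph{set} of elements being summed, since $+$ is idempotent; hence $\sum_{a\in N}q\left(x=a\right)=\sup\set{q\left(x=a\right)}{a\in N}$ and likewise $\sum_{a\in N'}q'\left(x=a\right)=\sup\set{q'\left(x=a\right)}{a\in N'}$, all of these suprema existing because $\mathcal{B}$ is complete. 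Maximality of the image of $q$ with respect to the equality formula says exactly that $\set{q'\left(x=a\right)}{a\in N'}=\set{q\left(x=a\right)}{a\in N}$, so the two suprema coincide and $\sum_{a\in N'}q'\left(x=a\right)=\sum_{a\in N}q\left(x=a\right)$, as required.

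The only real content beyond Proposition~\ref{prop:maximal-image} is this translation, and the main (minor) obstacle is simply recognizing that the right hypothesis to feed in is maximality of the image with respect to the single formula $x=y$, rather than trying to control the sum directly: once that is in place, idempotency of the Boolean sum converts the set-level equality produced by Proposition~\ref{prop:maximal-image} into the desired equality of suprema. No separate analytic estimate is needed, and completeness of $\mathcal{B}$ is used only to guarantee that the relevant suprema exist.
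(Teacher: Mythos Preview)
Your proposal is correct and follows essentially the same approach as the paper: both apply Proposition~\ref{prop:maximal-image} and then observe that maximality of the image with respect to the single formula $x=y$ forces maximality of $\sum_{a}q\left(x=a\right)$. The paper phrases this observation contrapositively (a strict increase in the sum would force a change in the image set), while you phrase it directly via idempotency of the supremum; these are the same one-line translation.
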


\begin{proof}
If the image of $p$ with respect to $x=y$ is maximal then $\stackrel[a\in M]{}{\sum}p\left(x=a\right)$
is maximal: indeed, assume $q$ over $N$ extends $p$. Then $\stackrel[a\in M]{}{\sum}p\left(x=a\right)<\stackrel[a\in N]{}{\sum}q\left(x=a\right)$
implies in particular that $Im\left(p|_{x=y}\right)\neq Im\left(q|_{x=y}\right)$.

We conclude from Proposition \ref{prop:maximal-image} that every
$\mathcal{B}$-type $p$ over $M$ has an extension $q$ over $N$
such that $\stackrel[a\in N]{}{\sum}q\left(x=a\right)$ is maximal.
\end{proof}
Corollary \ref{cor:maximal-realization} essentially reproves the
fact that if $\mathcal{B}=2^{\lambda}$ then every type can be extended
to a smooth type --- since in this case by Remark \ref{rem:non-realized},
$\stackrel[a]{}{\sum}q\left(x=a\right)$ is maximal iff $q$ is realized
and by Remark \ref{rem:set-algebra-realized} this happens iff $q$
is smooth. 

A similar approach can still be useful for any algebra. In the following
discussion we no longer need the notion of realized Boolean types,
and thus no longer assume that $x$ is a finite tuple. We start with
a useful claim:
\begin{claim}
\label{claim:non-conjugate-extensions}Assume $\mathcal{A},\mathcal{B}$
are Boolean algebras where $\cal B$ is complete, $\mathcal{A}'\subseteq\mathcal{A}$
a subalgebra, and $p:\mathcal{A}'\rightarrow\mathcal{B}$ and $p_{1},p_{2}:\mathcal{A}\rightarrow\mathcal{B}$
homomorphisms such that $p\subseteq p_{1},p_{2}$ and $p_{1}\neq p_{2}$.
Then there exist distinct extensions $q_{1},q_{2}:\mathcal{A}\rightarrow\mathcal{B}$
of $p$ and $\mathfrak{a}\in\cal A$ such that $q_{1}\left(\mathfrak{a}\right)<q_{2}\left(\mathfrak{a}\right)$
and $\sigma\circ q_{1}\neq q_{2}$ for any automorphism $\sigma$
of $\mathcal{B}$.
\end{claim}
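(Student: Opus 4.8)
The plan is to localize the disagreement between $p_1$ and $p_2$ to a single element of $\mathcal{A}$ and then play the two extremal extensions against each other, using Fact~\ref{fact:homomorphism-extension} to construct them and the completeness of $\mathcal{B}$ to rule out conjugacy. First I would fix $\mathfrak{a}\in\mathcal{A}$ with $p_1(\mathfrak{a})\neq p_2(\mathfrak{a})$, which exists since $p_1\neq p_2$ (note that necessarily $\mathfrak{a}\notin\mathcal{A}'$, as $p_1,p_2$ agree with $p$ there). Set $\mathfrak{l}=\sum\{p(\mathfrak{a}')\mid\mathfrak{a}'\in\mathcal{A}',\,\mathfrak{a}'\le\mathfrak{a}\}$ and $\mathfrak{u}=\prod\{p(\mathfrak{a}')\mid\mathfrak{a}'\in\mathcal{A}',\,\mathfrak{a}\le\mathfrak{a}'\}$; both exist because $\mathcal{B}$ is complete. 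Applying the necessity direction of Fact~\ref{fact:homomorphism-extension} to $p_1$ and to $p_2$ shows that $p_1(\mathfrak{a})$ and $p_2(\mathfrak{a})$ both lie in the interval $[\mathfrak{l},\mathfrak{u}]$; since these are two distinct elements, the interval is nondegenerate, i.e. $\mathfrak{l}<\mathfrak{u}$. This is the only place the hypothesis $p_1\neq p_2$ is used.

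Next I would invoke the sufficiency direction of Fact~\ref{fact:homomorphism-extension} twice to produce homomorphisms $q_1,q_2\colon\mathcal{A}\to\mathcal{B}$ extending $p$ with $q_1(\mathfrak{a})=\mathfrak{l}$ and $q_2(\mathfrak{a})=\mathfrak{u}$ (legitimate choices, since $\mathfrak{l}$ and $\mathfrak{u}$ both lie in $[\mathfrak{l},\mathfrak{u}]$). Then $q_1(\mathfrak{a})=\mathfrak{l}<\mathfrak{u}=q_2(\mathfrak{a})$, which simultaneously yields the required strict inequality and the distinctness of $q_1$ and $q_2$.

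The substantive step is the non-conjugacy. Suppose toward a contradiction that $\sigma\circ q_1=q_2$ for some automorphism $\sigma$ of $\mathcal{B}$. Restricting this equality to $\mathcal{A}'$ gives $\sigma\circ p=p$, so $\sigma$ fixes the image of $p$ pointwise; being an order-automorphism of the complete algebra $\mathcal{B}$, $\sigma$ preserves the supremum defining $\mathfrak{l}$, whence $\sigma(\mathfrak{l})=\mathfrak{l}$. On the other hand, evaluating $\sigma\circ q_1=q_2$ at $\mathfrak{a}$ gives $\sigma(\mathfrak{l})=\sigma(q_1(\mathfrak{a}))=q_2(\mathfrak{a})=\mathfrak{u}$, so $\mathfrak{l}=\mathfrak{u}$, contradicting $\mathfrak{l}<\mathfrak{u}$. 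I expect this last step to be the main (if routine) obstacle: it rests on the observation that an automorphism of a complete Boolean algebra commutes with arbitrary suprema, together with the fact that $\sigma\circ q_1=q_2$ forces $\sigma$ to fix $\operatorname{Im}(p)$ pointwise — which is precisely where completeness of $\mathcal{B}$ is essential.
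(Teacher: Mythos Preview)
Your proof is correct and follows essentially the same route as the paper: choose $\mathfrak{a}$ where $p_1,p_2$ disagree, set the lower and upper envelopes $\mathfrak{l},\mathfrak{u}$ (the paper's $\mathfrak{b}_1,\mathfrak{b}_2$), and take $q_1,q_2$ hitting these extremes via Fact~\ref{fact:homomorphism-extension}. The only difference is in the final contradiction: you use that an automorphism of a complete algebra preserves arbitrary suprema to get $\sigma(\mathfrak{l})=\mathfrak{l}$ directly, whereas the paper assumes only that $\sigma$ is a homomorphism, shows $\sigma(\mathfrak{b}_2)\leq\mathfrak{b}_2$ by monotonicity (bounding against each $p(\mathfrak{a}')$ with $\mathfrak{a}'\geq\mathfrak{a}$), and concludes $\sigma(\mathfrak{b}_1)=\sigma(\mathfrak{b}_2)$, so $\sigma$ is not injective --- a marginally stronger conclusion obtained without invoking completeness at that step.
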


\begin{proof}
Let $\mathfrak{a}\in\mathcal{A}\setminus\mathcal{A}'$ be such that
$p_{1}\left(\mathfrak{a}\right)\neq p_{2}\left(\mathfrak{a}\right)$. 

Let $\mathfrak{b}_{1}=\stackrel[\mathfrak{a}'\leq\mathfrak{a},\mathfrak{a}'\in\mathcal{A}']{}{\sum}p\left(\mathfrak{a}'\right)$
and $\mathfrak{b}_{2}=\stackrel[\mathfrak{a}'\geq\mathfrak{a},\mathfrak{a}'\in\mathcal{A}']{}{\prod}p\left(\mathfrak{a}'\right)$.
Then $\mathfrak{b}_{1}\leq p_{1}\left(\mathfrak{a}\right)\neq p_{2}\left(\mathfrak{a}\right)\leq\mathfrak{b}_{2}$
thus $\mathfrak{b}_{1}<\mathfrak{b}_{2}$.

By Fact \ref{fact:homomorphism-extension} there are extensions $q_{i}$
of $p$ such that $q_{i}\left(\mathfrak{a}\right)=\mathfrak{b}_{i}$
for $0\leq i<2$. Assume there is a homomorphism $\sigma:\mathcal{B}\rightarrow\mathcal{B}$
such that $\sigma\circ q_{1}=q_{2}$. 

Then for any $\mathfrak{b}\in Im\left(p\right)$, take $\mathfrak{a}'\in\mathcal{A}'$
such that $p\left(\mathfrak{a'}\right)=\mathfrak{b}$. Then $\sigma\left(\mathfrak{b}\right)=\sigma\left(p\left(\mathfrak{a}'\right)\right)=\sigma\left(q_{1}\left(\mathfrak{a}'\right)\right)=q_{2}\left(\mathfrak{a}'\right)=p\left(\mathfrak{a}'\right)=\mathfrak{b}$
thus $\sigma|_{Im\left(p\right)}=\id_{Im\left(p\right)}$.

Further $\sigma\left(\mathfrak{b}_{1}\right)=\sigma\left(q_{1}\left(\mathfrak{a}\right)\right)=q_{2}\left(\mathfrak{a}\right)=\mathfrak{b}_{2}$.
We conclude that $\mathfrak{b}_{2}=\sigma\left(\mathfrak{b}_{1}\right)\leq\sigma\left(\mathfrak{b}_{2}\right)$.
On the other hand, for any $\mathfrak{a}'\in\mathcal{A}'$ such that
$\mathfrak{a}'\geq\mathfrak{a}$, since by assumption $\mathfrak{b}_{2}\leq p\left(\mathfrak{a'}\right)$
we get $\sigma\left(\mathfrak{b}_{2}\right)\leq\sigma\left(p\left(\mathfrak{a}'\right)\right)=p\left(\mathfrak{a}'\right)$
thus $\sigma\left(\mathfrak{b}_{2}\right)\leq\stackrel[a'\geq a,a'\in A']{}{\prod}p\left(\mathfrak{a}'\right)=\mathfrak{b}_{2}$. 

We get $\sigma\left(\mathfrak{b}_{2}\right)=\mathfrak{b}_{2}=\sigma\left(\mathfrak{b}_{1}\right)$,
thus $\sigma$ is not injective and in particular, not an automorphism.
\end{proof}
\begin{prop}
\label{prop:maximal-is-smooth}Assume $\mathcal{B}$ is complete,
and assume that $N$ is a model of $T$ and $q$ is a $\mathcal{B}$-type
over $N$ that has the property in the conclusion of Proposition \ref{prop:maximal-image}:
it has maximal image with respect to every formula. Then for any formula
$\varphi\left(x,y\right)$, if there exist $N'\supseteq N$ and extensions
$q_{1},q_{2}\in S_{\mathcal{B}}^{x}\left(N'\right)$ of $q$  such
that $q_{1}|_{\varphi}\neq q_{2}|_{\varphi}$, then $\varphi$ is
independent.

In particular, if $T$ has NIP, such a $q$ is smooth.
\end{prop}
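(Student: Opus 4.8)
The plan is to prove the contrapositive: assuming $\varphi$ is NIP, I want to show that $q$ has a unique extension with respect to $\varphi$ to every $N' \supseteq N$, which combined over all formulas gives smoothness. So suppose toward a contradiction that $q_1, q_2 \in S_{\mathcal{B}}^x(N')$ both extend $q$ but $q_1|_\varphi \neq q_2|_\varphi$. The key leverage is Claim \ref{claim:non-conjugate-extensions}: applying it with $\mathcal{A}' = L_x(N)$, $\mathcal{A} = L_x(N')$, the given $p = q$, and the two homomorphisms $q_1, q_2$, I obtain \emph{new} extensions $q_1', q_2'$ of $q$ and a formula $\psi(x,c) \in L_x(N')$ such that $q_1'(\psi(x,c)) < q_2'(\psi(x,c))$ and moreover no automorphism $\sigma$ of $\mathcal{B}$ satisfies $\sigma \circ q_1' = q_2'$. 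I should first check that Claim \ref{claim:non-conjugate-extensions} can be invoked at the level of a single formula $\varphi$; since $q_1|_\varphi \neq q_2|_\varphi$, the witnessing element $\mathfrak{a}$ can be taken of the form $\psi(x,c)$ where $\psi$ is (a Boolean combination reducible to) $\varphi$, and I will need to track that the strict inequality is realized on an instance of $\varphi$ itself.

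The heart of the argument is then to manufacture, from the strict inequality $q_1'(\varphi(x,c)) < q_2'(\varphi(x,c))$ together with the maximality of $\image(q|_\varphi)$, an independent configuration for $\varphi$, contradicting NIP. First I would set $\mathfrak{d} = q_2'(\varphi(x,c)) - q_1'(\varphi(x,c)) \in \mathcal{B}^+$. The idea is to iterate: maximality of $q$ says that every Boolean value attained by any extension of $q$ on a $\varphi$-formula is \emph{already attained} over $N$ by $q$ itself. The nonconjugacy conclusion of the claim is what prevents a trivial collapse and should let me find, by repeatedly extending and using maximality to pull witnesses back into $N$, a sequence $\langle c_i \rangle_{i<\omega}$ of parameters together with nonzero products of the form $\prod_{i \in I} q(\varphi(x,c_i)) \cdot \prod_{i \notin I} -q(\varphi(x,c_i))$ for arbitrarily large finite index sets, i.e. an independent family in the sense used in Proposition \ref{prop:image not too big}. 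Translating an independent subset of $\image(q|_\varphi)$ of unbounded finite size into shattering of $\langle c_i \rangle$ by instances $\varphi(b,y)$ is exactly the (consistency $\Leftrightarrow$ nonzero product) correspondence exploited in the proof of Proposition \ref{prop:image not too big}, run in reverse.

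I expect the main obstacle to be precisely the construction of the shattering configuration: it is easy to get \emph{one} extra splitting value $\mathfrak{d}$, but NIP requires splitting of \emph{unbounded VC dimension}, so I must argue that the discrepancy between $q_1'$ and $q_2'$ can be amplified. The nonconjugacy clause of Claim \ref{claim:non-conjugate-extensions} is the crucial input here, since it guarantees the two extensions differ in a way not absorbable by an automorphism of $\mathcal{B}$, and this is what should let the maximality hypothesis force the appearance of genuinely new independent products rather than mere relabelings of existing ones. The cleanest route is likely an inductive construction of parameters $c_0, c_1, \dots$ in successively larger models, where at each stage maximality lets me realize the fresh value back over $N$, so that all the witnessing $\varphi(x,c_i)$ live (up to equal $\mathcal{B}$-value) over the single model $N$; a counting or pigeonhole step then produces an independent set in $\image(q|_\varphi)$ of every finite size, contradicting the finite dual VC-dimension of $\varphi$. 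The final sentence, that NIP implies $q$ is smooth, then follows by applying the formula-wise uniqueness to every $\varphi$ simultaneously, since a $\mathcal{B}$-type is determined by its restrictions to all single formulas.
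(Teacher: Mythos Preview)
Your outline has the right overall shape (use Claim \ref{claim:non-conjugate-extensions} to get a strict inequality on some $\varphi$-instance, then exploit maximality of the image to produce a VC-dimension contradiction), but there are two genuine gaps.

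First, the non-conjugacy clause of Claim \ref{claim:non-conjugate-extensions} is a red herring here: the paper only uses that claim to obtain $\mathfrak{a}_{1}=q_{1}'(\varphi(x,a))<q_{2}'(\varphi(x,a))=\mathfrak{a}_{2}$, and then works entirely with the element $\mathfrak{b}=\mathfrak{a}_{2}-\mathfrak{a}_{1}$. The amplification does \emph{not} come from non-conjugacy but from Fact \ref{fact:homomorphism-extension}: for \emph{every} $\mathfrak{b}'\leq\mathfrak{b}$ there is an extension of $q$ sending $\varphi(x,a)$ to $\mathfrak{a}_{1}+\mathfrak{b}'$, so by maximality each such value is already $q(\varphi(x,a'))$ for some $a'\in N$. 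No iteration over larger models is needed; all the parameters live over $N$ in one step.

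Second, and more seriously, you are missing a necessary case split. Your independence argument requires splitting $\mathfrak{b}$ into $2^{n}$ nonzero pairwise disjoint pieces (one for each $\eta\in\{\pm1\}^{n}$, where $n$ exceeds the dual VC-dimension), and this is only possible when there is no atom below $\mathfrak{b}$. When some atom $\mathfrak{b}_{*}\leq\mathfrak{b}$ exists, the paper uses a completely different argument that your outline does not cover: it invokes maximality of $q$ with respect to the formula $x'=z$ (for $x'$ the finite subtuple of $x$ actually occurring in $\varphi$), together with Claim \ref{claim:atoms-in-maximal}, to find $c\in N$ with $\mathfrak{b}_{*}\leq q(x'=c)$. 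Composing with the principal ultrafilter at $\mathfrak{b}_{*}$ then yields a \emph{realized} classical type with two distinct extensions, which is impossible. Your plan only ever appeals to maximality of $\operatorname{Im}(q|_{\varphi})$, so it cannot handle this case.
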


\begin{proof}
Assume otherwise, and let $n$ the such that $n$ is greater than
the dual VC-dimension of $\varphi\left(x,y\right)$ (see e.g., \cite[Lemma 6.3]{pierrebook}). 

Let $N'$ be an elementary extension of $N$, $q_{i}\in S_{\mathcal{B}}^{x}\left(N'\right)$
extending $q$ and $a\in N'$ such that $q_{i}\left(\varphi\left(x,a\right)\right)=\mathfrak{a}_{i}$.
By the previous claim we may assume $\mathfrak{a}_{1}<\mathfrak{a}_{2}$
and let $\mathfrak{b}=\mathfrak{a}_{2}-\mathfrak{a}_{1}>0$. Note
that by Fact \ref{fact:homomorphism-extension}, for any $\mathfrak{b}'\leq\mathfrak{b}$
there exists some $q'\in S_{\mathcal{B}}^{x}\left(N'\right)$ extending
$q$ such that $q'\left(\varphi\left(x,a\right)\right)=\mathfrak{a}_{1}+\mathfrak{b}'$,
thus by assumption (on $q$) there exists $a'\in N$ such that $q\left(\varphi\left(x,a'\right)\right)=\mathfrak{a}_{1}+\mathfrak{b}'$.

Assume first $\mathfrak{b}_{*}\leq\mathfrak{b}\leq\mathfrak{a}_{2},-\mathfrak{a}_{1}$
for some atom $\mathfrak{b}_{*}$ of $\mathcal{B}$. Let $x'\subseteq x$
be a finite tuple containing all variables from $x$ appearing in
$\varphi\left(x,y\right)$. Since $Im\left(q|_{x'=z}\right)$ is maximal,
we get by Claim \ref{claim:atoms-in-maximal} and the proof of Corollary
\ref{cor:maximal-realization} that $\mathfrak{b}_{*}\nleq-\stackrel[c\in N^{\left|x'\right|}]{}{\sum}q\left(x'=c\right)$;
but if $\mathfrak{b}_{*}\nleq q\left(x'=c\right)$ for all $c\in N^{\left|x'\right|}$
then since $\mathfrak{b}_{*}$ is an atom we get $\mathfrak{b}_{*}\leq-q\left(x'=c\right)$
for all $c\in N^{\left|x'\right|}$ that is 
\[
\mathfrak{b}_{*}\leq\stackrel[c\in N^{\left|x'\right|}]{}{\prod}-q\left(x'=c\right)=-\stackrel[c\in N^{\left|x'\right|}]{}{\sum}q\left(x'=c\right).
\]

Let then $c\in N^{\left|x'\right|}$ be such that $q\left(x'=c\right)\geq\mathfrak{b}_{*}$.
Let $D_{\mathfrak{b}_{*}}$ be the ultrafilter corresponding to $\mathfrak{b}_{*}$
represented as a homomorphism to $2$. Then $D_{\mathfrak{b}_{*}}\circ q|_{x'}\in S\left(A\right)$
is realized in $N$ (since $D_{\mathfrak{b}_{*}}\left(q\left(x'=c\right)\right)=1$).
But $D_{\mathfrak{b}_{*}}\circ q_{i}|_{x'}$ are extensions of $D_{\mathfrak{b}_{*}}\circ q|_{x'}$
and we have $D_{\mathfrak{b}_{*}}\left(q_{1}\left(\varphi\left(x,a\right)\right)\right)=D_{\mathfrak{b}_{*}}\left(\mathfrak{a}_{1}\right)=0$
and $D_{\mathfrak{b}_{*}}\left(q_{2}\left(\varphi\left(x,a\right)\right)\right)=D_{\mathfrak{b}_{*}}\left(\mathfrak{a}_{2}\right)=1$
so $D_{\mathfrak{b}_{*}}\circ q$ is not smooth --- contradiction.

Thus there is no atom of $\mathcal{B}$ under $\mathfrak{b}$, therefore
by trivial induction there exist disjoint $\mathfrak{b}_{\eta}>0$
for $\eta\in\left\{ \pm1\right\} ^{n}$ such that $\stackrel[\eta\in\left\{ \pm1\right\} ^{n}]{}{\sum}\mathfrak{b}_{\eta}=\mathfrak{b}$.
Let $\mathfrak{b}_{i}=\stackrel[\eta\in\left\{ \pm1\right\} ^{n},\eta\left(i\right)=1]{}{\sum}\mathfrak{b}_{\eta}\leq\mathfrak{b}$.
Then $\mathfrak{b}-\mathfrak{b}_{i}=\stackrel[\eta\in\left\{ \pm1\right\} ^{n},\eta\left(i\right)=-1]{}{\sum}\mathfrak{b}_{\eta}$
thus for any such $\eta$, $\mathfrak{b}\stackrel[i<n]{}{\prod}\eta\left(i\right)\mathfrak{b}_{i}=\mathfrak{b}_{\eta}>0$.

By assumption on $q$, for any $i$ there exists $a_{i}\in N^{\left|y\right|}$
such that $q\left(\varphi\left(x,a_{i}\right)\right)=\mathfrak{a}_{1}+\mathfrak{b}_{i}$. 

Let $\varphi_{\eta}\left(x,y_{0},...,y_{n-1}\right)=\stackrel[i<n]{}{\bigwedge}\varphi\left(x,y_{i}\right)^{\eta\left(i\right)}$.
Then 
\begin{align*}
q\left(\varphi_{\eta}\left(x,a_{0},...,a_{n-1}\right)\right) & =\prod_{i<n}\eta\left(i\right)\left(\mathfrak{a}_{1}+\mathfrak{b}_{i}\right)\geq\\
 & \mathfrak{b}\prod_{i<n}\eta\left(i\right)\left(\mathfrak{a}_{1}+\mathfrak{b}_{i}\right).
\end{align*}

Since for every $i<n$, $\mathfrak{b}\cdot\left(-\mathfrak{b}_{i}\right)=\mathfrak{b}-\mathfrak{b}_{i}\leq\mathfrak{b}\leq-\mathfrak{a}_{1}$,
we have $\mathfrak{b}\cdot-\left(\mathfrak{a}_{1}+\mathfrak{b}_{i}\right)=\left(-\mathfrak{a}_{1}\right)\cdot\mathfrak{b}\cdot\left(-\mathfrak{b}_{i}\right)=\mathfrak{b}-\mathfrak{b}_{i}$;
therefore $\mathfrak{b}\cdot\eta\left(i\right)\left(\mathfrak{a}_{1}+\mathfrak{b}_{i}\right)\geq\mathfrak{b}\cdot\eta\left(i\right)\mathfrak{b}_{i}$.

Thus 
\begin{align*}
q\left(\varphi_{\eta}\left(x,a_{0},...,a_{n-1}\right)\right) & \geq\mathfrak{b}\prod_{i<n}\eta\left(i\right)\left(\mathfrak{a}_{1}+\mathfrak{b}_{i}\right)\geq\\
 & \mathfrak{b}\prod_{i<n}\eta\left(i\right)\mathfrak{b}_{i}=\mathfrak{b}_{\eta}>0
\end{align*}

thus $\varphi_{\eta}\left(x,a_{0},...,a_{n-1}\right)\neq\bot$ thus
$N\vDash\exists x\varphi_{\eta}\left(x,a_{0},...,a_{n-1}\right)$
contradicting our choice of $n$. 
\end{proof}
By Proposition \ref{prop:maximal-image} and remark \ref{rem:cardinality of maximal image}
we get:
\begin{cor}
\label{cor:smooth-extension}If $\mathcal{B}$ is a complete Boolean
algebra, every $\mathcal{B}$-type in an NIP theory can be extended
to a smooth $\mathcal{B}$-type. In fact, if $\cal B$ is $\kappa$-c.c.,
then given a $\mathcal{B}$-type $p\in S_{\cal B}^{x}\left(M\right)$
we can find some $N\succ M$ and smooth $q\in S_{\cal B}^{x}\left(N\right)$
where $\left|N\right|\leq\left|M\right|+\left|x\right|+2^{<\kappa}+\left|T\right|$.
\end{cor}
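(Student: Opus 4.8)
The plan is to assemble the two preceding results into the statement, so the work is almost entirely bookkeeping. First I would take an arbitrary $\mathcal{B}$-type $p\in S_{\cal B}^{x}\left(M\right)$ and apply Proposition \ref{prop:maximal-image} to obtain an extension $q\in S_{\cal B}^{x}\left(N\right)$, with $N\succ M$, whose image is maximal with respect to every formula $\varphi\left(x,y\right)$; that is, no further extension of $q$ can enlarge any of the sets $\left\{ q\left(\varphi\left(x,b\right)\right)\mid b\in N\right\} $. This is the genuinely constructive step, obtained by the transfinite back-and-forth of Proposition \ref{prop:maximal-image}, and it uses neither NIP nor completeness beyond what that proposition requires.

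Second, I would invoke Proposition \ref{prop:maximal-is-smooth}: since $\mathcal{B}$ is complete and $T$ is NIP, a type $q$ that has maximal image with respect to every formula is automatically smooth. Concretely, if $q$ had two distinct extensions over some $N'\succ N$, they would differ on some $\varphi$, and Proposition \ref{prop:maximal-is-smooth} would force $\varphi$ to be independent, contradicting NIP. Chaining these two steps yields the first assertion of the corollary: every $\mathcal{B}$-type over a complete Boolean algebra in an NIP theory extends to a smooth one.

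For the cardinality refinement I would additionally assume $\mathcal{B}$ is $\kappa$-c.c.\ and appeal to Remark \ref{rem:cardinality of maximal image}, which records that Proposition \ref{prop:image not too big} bounds $\left|Im\left(q|_{\varphi}\right)\right|$ by $2^{<\kappa}$ for each $\varphi$. Choosing one parameter $b$ witnessing each value in each $Im\left(q|_{\varphi}\right)$ and applying L\"{o}wenheim--Skolem, one may take the model $N$ produced in the first step to satisfy $\left|N\right|\leq\left|M\right|+\left|x\right|+2^{<\kappa}+\left|T\right|$, and the smooth $q$ then lives over this $N$.

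The main obstacle has already been discharged inside Proposition \ref{prop:maximal-is-smooth} --- the delicate dual-VC-dimension counting argument that converts maximality of the image into uniqueness of the extension --- so at the level of this corollary there is essentially no obstacle left. The only points requiring a moment's care are confirming that the phrase ``maximal image with respect to every $\varphi$'' produced by Proposition \ref{prop:maximal-image} is exactly the hypothesis consumed by Proposition \ref{prop:maximal-is-smooth}, which it is by construction, and verifying that the size bound of Remark \ref{rem:cardinality of maximal image} survives the passage through the smoothness argument, which it does since that argument adds no new parameters to $N$.
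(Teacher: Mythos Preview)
Your proposal is correct and follows exactly the paper's approach: apply Proposition~\ref{prop:maximal-image} to obtain an extension with maximal image, invoke Proposition~\ref{prop:maximal-is-smooth} (using completeness of $\mathcal{B}$ and NIP) to conclude smoothness, and cite Remark~\ref{rem:cardinality of maximal image} for the cardinality bound. The paper's own justification is nothing more than the one-line pointer ``By Proposition~\ref{prop:maximal-image} and remark~\ref{rem:cardinality of maximal image} we get,'' so your expanded account is faithful and more detailed than necessary.
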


\section{\label{sec:Relation-to-Keisler}Relation to Keisler Measures}

\subsection{\label{subsec:Connecting-Keisler-Measures}Connecting Keisler Measures
and Boolean Types}

In this subsection we recall the notion of a Keisler measure, and
attach to a Keisler measure a Boolean type in a canonical probability
algebra in a way that preserves the measure. This preserves many of
the measure's properties and will be used later to transfer results
from Boolean types to Keisler measures.
\begin{defn}
A \emph{Keisler measure} in $x$ over a set $A$ is a finitely additive
probability measure on $L_{x}\left(A\right)$. 

Two Keisler measures $\lambda,\lambda'$ in $x$ over $A$ are conjugates
if exists an elementary map $\pi:A\rightarrow A$ such that $\lambda\left(\varphi\left(x,\pi^{-1}\left(a\right)\right)\right)=\lambda'\left(\varphi\left(x,a\right)\right)$
for any formula $\varphi\left(x,a\right)$.
\end{defn}

\begin{defn}
\label{def:A-measure-algebra}A \emph{measure algebra} is a $\sigma$-complete
Boolean algebra $\mathcal{B}$ (not necessarily an algebra of sets)
equipped with a $\sigma$-additive measure that is positive on every
element other than $0_{\mathcal{\mathcal{B}}}$.

A \emph{probability algebra }is a measure algebra that assigns measure
$1$ to $1_{\mathcal{B}}$.
\end{defn}

\begin{example}
For every probability space, the algebra of measurable subsets up
to measure 0 is a probability algebra.
\end{example}

\begin{defn}
Let $\kappa$ an infinite cardinal. Then $\left(\mathcal{U}_{\kappa},\nu_{\kappa}\right)$
is the probability algebra of Borel subsets of $2^{\kappa}$ up to
measure $0$, with $\nu_{\kappa}$ the usual product measure (see
\cite[254J]{MR2462280}).
\end{defn}

\begin{rem}
Since every probability algebra has c.c.c., every supremum or infimum
is effectively countable. Thus in particular the $\sigma$-complete
subalgebra generated by some subset is the same as the complete subalgebra
generated by the same set.
\end{rem}

We will show that we can attach to a Keisler measure a $\mathcal{B}$-type
for a measure algebra $\mathcal{B}$.
\begin{rem}
\label{rem:generation-of-Keisler}Given a Keisler measure $\lambda$
over $A$, we can consider it as a measure on clopen sets of $S^{x}\left(A\right)$
and then extend it uniquely to a regular $\sigma$-additive measure
on the Borel sets of $S^{x}\left(A\right)$ (see \cite[416Qa]{FremlinMeasureTheoryVol4}). 

Let $\left(\Bb,\lambda\right)$ be the probability algebra of Borel
subsets of $S^{x}\left(A\right)$ up to $\lambda$ measure $0$ and
let $\psi$ be the projection from the algebra of Borel subsets onto
$\mathcal{B}$. Since the clopen sets are a basis for the topology,
the complete subalgebra of $\Bb$ generated by the clopen sets is
$\mathcal{B}$ itself. Note that there are at most $\left|T\right|+\left|A\right|+\left|x\right|$
clopen sets.
\end{rem}

\begin{fact}
\label{fact:embedding-measures}Let $\kappa$ an infinite cardinal.
If $\mathcal{B}$ is a probability algebra, and there is $B\subseteq\mathcal{B}$
such that $\left|B\right|\leq\kappa$ and the smallest ($\sigma$-)
complete subalgebra of $\mathcal{B}$ containing $B$ is $\mathcal{B}$
itself then there is a measure preserving homomorphism $f$ from $\Bb$
to $\left(\mathcal{U}_{\kappa},\nu_{\kappa}\right)$ (see \cite[Lemma 332N]{MR2459668},
and see also the proposition on page 126 and 331F there).

Further, every measure algebra homomorphism is an embedding.
\end{fact}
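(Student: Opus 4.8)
The plan is to derive both assertions from Maharam's classification of probability algebras, disposing of the second (automatic injectivity) first since it is elementary. If $f:\mathcal{B}\to\mathcal{B}'$ is a homomorphism of measure algebras, i.e.\ a Boolean homomorphism with $\nu'\left(f\left(\mathfrak{a}\right)\right)=\mu\left(\mathfrak{a}\right)$ for all $\mathfrak{a}$, and if $f\left(\mathfrak{a}\right)=0$, then $\mu\left(\mathfrak{a}\right)=\nu'\left(f\left(\mathfrak{a}\right)\right)=0$; but by Definition \ref{def:A-measure-algebra} the measure is positive on every nonzero element, so $\mathfrak{a}=0$. Hence the kernel of $f$ is trivial and $f$ is an embedding.

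For the existence of a measure-preserving $f:\mathcal{B}\to\left(\mathcal{U}_{\kappa},\nu_{\kappa}\right)$, I would first bound the Maharam type of $\mathcal{B}$. The Boolean subalgebra $\mathcal{B}_{0}$ generated from $B$ by finitely many Boolean operations has cardinality at most $\left|B\right|+\aleph_{0}\leq\kappa$, and its closure under the measure metric $d\left(\mathfrak{a},\mathfrak{b}\right)=\mu\left(\mathfrak{a}\triangle\mathfrak{b}\right)$ is exactly the $\sigma$-complete subalgebra it generates, which by hypothesis is all of $\mathcal{B}$. Thus $\mathcal{B}$ has a metrically dense subset of size $\leq\kappa$, so its Maharam type is $\leq\kappa$.

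Then I would invoke Maharam's theorem: write $1_{\mathcal{B}}=\sum_{n}\mathfrak{e}_{n}$ as a (necessarily countable, by c.c.c.) partition such that each relative algebra $\mathcal{B}|_{\mathfrak{e}_{n}}$ is either a single atom or is atomless homogeneous of some infinite Maharam type $\tau_{n}\leq\kappa$, in which case, up to normalizing the measure by $\mu\left(\mathfrak{e}_{n}\right)$, it is isomorphic to $\left(\mathcal{U}_{\tau_{n}},\nu_{\tau_{n}}\right)$. On the target side, since $\kappa$ is infinite $\left(\mathcal{U}_{\kappa},\nu_{\kappa}\right)$ is homogeneous, so I may choose a partition $1=\sum_{n}\mathfrak{d}_{n}$ with $\nu_{\kappa}\left(\mathfrak{d}_{n}\right)=\mu\left(\mathfrak{e}_{n}\right)$ (using atomlessness and $\sum_{n}\mu\left(\mathfrak{e}_{n}\right)=1$), each relative algebra $\mathcal{U}_{\kappa}|_{\mathfrak{d}_{n}}$ being again a rescaled copy of $\left(\mathcal{U}_{\kappa},\nu_{\kappa}\right)$. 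For each $n$ I build a measure-preserving embedding $f_{n}:\mathcal{B}|_{\mathfrak{e}_{n}}\to\mathcal{U}_{\kappa}|_{\mathfrak{d}_{n}}$: an atom goes to $\mathfrak{d}_{n}$, and a homogeneous piece embeds because $\left(\mathcal{U}_{\tau_{n}},\nu_{\tau_{n}}\right)$ sits inside $\left(\mathcal{U}_{\kappa},\nu_{\kappa}\right)$ through the coordinate projection $2^{\kappa}\twoheadrightarrow2^{\tau_{n}}$ (pulling back Borel sets), using $\tau_{n}\leq\kappa$. Gluing by $f\left(\mathfrak{b}\right)=\sum_{n}f_{n}\left(\mathfrak{b}\cdot\mathfrak{e}_{n}\right)$ yields the desired measure-preserving homomorphism, which is an embedding by the first paragraph.

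The main obstacle is genuinely the invocation of Maharam's theorem together with the homogeneity and self-similarity of $\left(\mathcal{U}_{\kappa},\nu_{\kappa}\right)$ --- that every relativization to a positive element is again a rescaled copy of itself, and that $\mathcal{U}_{\lambda}$ embeds measure-preservingly into $\mathcal{U}_{\kappa}$ for $\lambda\leq\kappa$. Granting these structural facts (which is precisely what the cited results of \cite{MR2459668} supply), the assembly above is routine; the only points demanding care are the verification that finite approximation gives metric density (so that the Maharam type is genuinely $\leq\kappa$) and the bookkeeping ensuring the two partitions have matching measures.
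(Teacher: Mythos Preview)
The paper does not supply its own proof of this statement; it records it as a \emph{Fact} with citations to Fremlin \cite{MR2459668} (specifically 332N, 331F, and the proposition on page 126). Your proposal is a correct outline of the standard Maharam-theoretic argument that those citations encode: bounding the Maharam type of $\mathcal{B}$ by $\kappa$ via metric density of the finitely generated subalgebra, invoking Maharam's decomposition into homogeneous pieces of type $\tau_{n}\leq\kappa$, and embedding each piece into a matching relativization of $\mathcal{U}_{\kappa}$ using the homogeneity and self-similarity of $\left(\mathcal{U}_{\kappa},\nu_{\kappa}\right)$. The injectivity clause is handled correctly and is indeed immediate from strict positivity of the measure.

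There is therefore no genuine divergence to report: you have reconstructed, in compressed form, precisely the content the paper outsources to Fremlin. The only point worth flagging is that your identification of the $\sigma$-complete subalgebra generated by $B$ with the metric closure of the finitely generated subalgebra, while true, is itself a nontrivial fact about probability algebras (it relies on c.c.c.\ and on order-closed $=$ metrically closed in this setting; see e.g.\ \cite[323D--323H]{MR2459668}); you acknowledge this implicitly, but in a fully self-contained write-up it would deserve a sentence of justification.
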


\begin{prop}
\label{prop:extending-partial-isomorphisms}Assume $\mathcal{A}\subseteq\mathcal{U}_{\kappa}$
is a complete subalgebra that can be completely generated by a set
$S$ such that $\left|S\right|<\kappa$. Assume further that $f:\mathcal{A}\rightarrow\mathcal{U}_{\kappa}$
is measure preserving (thus an embedding).

Then exists a measure preserving automorphism $\sigma$ of $\mathcal{U}_{\kappa}$
extending $f$.
\end{prop}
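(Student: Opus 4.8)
### Proof proposal

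\noindent\textbf{Setup and strategy.} The plan is to extend the embedding $f:\mathcal{A}\rightarrow\mathcal{U}_{\kappa}$ one generator at a time, using the homogeneity of the measure algebra $\mathcal{U}_{\kappa}$ and a back-and-forth argument to ensure the resulting map is an \emph{automorphism} (i.e.\ surjective), not merely an embedding. The core principle I would rely on is the measure-theoretic analogue of homogeneity for $\mathcal{U}_{\kappa}$: since $\mathcal{U}_{\kappa}$ is a homogeneous probability algebra of Maharam type $\kappa$, any measure-preserving embedding of a complete subalgebra of type $<\kappa$ can be ``moved'' freely, because the quotient/relative algebra over any such subalgebra is again of full type $\kappa$. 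Concretely, the obstruction to extending a partial measure-preserving map to a new element is controlled by Fact \ref{fact:homomorphism-extension} (for the existence of \emph{some} extension) together with the freedom that the large type of $\mathcal{U}_{\kappa}$ provides.

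\noindent\textbf{Back-and-forth.} First I would fix an enumeration of $\mathcal{U}_{\kappa}$, and build an increasing chain of measure-preserving embeddings $\sigma_{\xi}$, each with complete domain and range generated by a set of size $<\kappa$, with $\sigma_0 = f$. At each successor step I would handle one element: in the ``forth'' direction, to bring some $\mathfrak{c}\in\mathcal{U}_{\kappa}$ into the domain, I must find an admissible value $\mathfrak{d}=\sigma_{\xi+1}(\mathfrak{c})$ in the range, i.e.\ one satisfying the bounds of Fact \ref{fact:homomorphism-extension} and the correct measure constraints. The key point is that because the current range is completely generated by $<\kappa$ elements, its \emph{relative} algebra above the relevant ``gap'' still has Maharam type $\kappa$ and hence contains elements realizing every measure-theoretically consistent value; this lets me choose $\mathfrak{d}$ with the right measure and the right Boolean position relative to the current range. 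In the ``back'' direction I symmetrically force a given element into the range, guaranteeing surjectivity in the limit. Taking unions at limit stages (a chain of measure-preserving homomorphisms is again one) and using that $|S|<\kappa$ keeps the domain/range type below $\kappa$ throughout, I obtain after $|\mathcal{U}_{\kappa}|$ steps a measure-preserving bijection $\sigma$ extending $f$.

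\noindent\textbf{Why the type condition matters.} The hypothesis $|S|<\kappa$ is exactly what keeps the ``remaining room'' in $\mathcal{U}_{\kappa}$ of full Maharam type $\kappa$ at every stage of the construction: the complete subalgebra generated so far always has type $<\kappa$, so its relative complement has type $\kappa$, and I can keep extending. Were the domain to have type $\kappa$ there would be no slack and the argument would stall. I would invoke here the structural classification of homogeneous measure algebras (Maharam's theorem) and the fact from Fremlin's treatise that measure-preserving maps between subalgebras of smaller type than the ambient homogeneous algebra extend to automorphisms — this is the cleanest route and is presumably what the referenced private communication with Fremlin supplies.

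\noindent\textbf{Main obstacle.} The delicate step is the successor stage: verifying that at each point one can genuinely choose an image (or preimage) element that simultaneously (i) lies in the correct order-interval dictated by Fact \ref{fact:homomorphism-extension} so that a homomorphic extension exists, and (ii) is measure-preserving, and (iii) keeps the generating set of the range below $\kappa$. The existence of \emph{some} extension is routine from Fact \ref{fact:homomorphism-extension}; the real work is getting it to be \emph{measure-preserving} while maintaining surjectivity via back-and-forth, which is precisely where Maharam homogeneity of $\mathcal{U}_{\kappa}$ and the type bound $|S|<\kappa$ must be combined carefully.
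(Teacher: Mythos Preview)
Your proposal is correct and is essentially the same approach as the paper's: the paper's proof simply says to follow Fremlin's proof of \cite[Theorem 331I]{MR2459668} (with \cite[Theorem 331K]{MR2459668} supplying that $\mathcal{U}_{\kappa}$ is homogeneous of Maharam type $\kappa$), noting that the recursive back-and-forth construction there can begin from any measure-preserving isomorphism on a complete subalgebra generated by fewer than $\kappa$ elements. You have spelled out precisely that back-and-forth, including the role of the hypothesis $|S|<\kappa$ in keeping enough Maharam type available at each successor step; your identification of the successor step as the crux is accurate, and it is exactly what Fremlin's 331I handles.
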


\begin{proof}
By following the proof for \cite[Theorem 331I]{MR2459668} ($\mathcal{U}_{\kappa}$
satisfies the requirements for the theorem by \cite[Theorem 331K]{MR2459668}),
we find that a recursive construction of an automorphism can start
from any partial isomorphism, as long as the domain of said partial
isomorphism is a complete subalgebra (in Fremlin's terminology, closed
subalgebra) that can be completely generated by less than $\kappa$
elements.
\end{proof}
\begin{rem}
\label{rem:extra-structure}When considering Boolean types to a measure
algebra $\left(\Bb,\lambda\right)$, we will adapt the definitions
of conjugate types (Definition \ref{def:generalized types}) and conjugate
subalgebras to this context, which means that partial isomorphisms
are now required to keep the extra structure, (that is, to preserve
the measure). 
\end{rem}

\begin{prop}
\label{prop:attaching a general type to a measure} Let $\kappa\geq\left|A\right|+\left|T\right|+\left|x\right|$
be some cardinal. There is an injection from the set of Kiesler measures
over a set $A$ to the set of $\left(\mathcal{U}_{\kappa},\nu_{\kappa}\right)$-types
over $A$. Further, this injection respects conjugation; that is,
if the images of $\lambda,\lambda'$ are conjugate with $\pi:A\rightarrow A$,
then so are $\lambda,\lambda'$.
\end{prop}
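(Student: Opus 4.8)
The plan is to build the injection by composing two maps. First, given a Keisler measure $\lambda$ over $A$, I would use Remark \ref{rem:generation-of-Keisler} to pass to the probability algebra $\left(\Bb,\lambda\right)$ of Borel subsets of $S^{x}\left(A\right)$ up to measure $0$, together with the projection $\psi$ from Borel sets onto $\Bb$. This gives a canonical $\Bb$-type $p_{\lambda}\in S_{\Bb}^{x}\left(A\right)$ defined by sending a formula $\varphi\left(x,a\right)$ to the class in $\Bb$ of its clopen set $\left[\varphi\left(x,a\right)\right]\subseteq S^{x}\left(A\right)$; by construction this is a Boolean algebra homomorphism, and the measure of $p_{\lambda}\left(\varphi\right)$ equals $\lambda\left(\varphi\right)$, so the assignment $\lambda\mapsto p_{\lambda}$ loses no information and is therefore injective. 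Note that $\Bb$ is completely generated by the (at most $\kappa$-many) clopen sets, so Fact \ref{fact:embedding-measures} applies.

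Second, since $\kappa\geq\left|A\right|+\left|T\right|+\left|x\right|$ bounds the number of clopen sets, Fact \ref{fact:embedding-measures} supplies a measure preserving homomorphism --- necessarily an embedding --- $f:\Bb\rightarrow\left(\mathcal{U}_{\kappa},\nu_{\kappa}\right)$. Composing, I set the desired map to be $\lambda\mapsto f\circ p_{\lambda}\in S_{\mathcal{U}_{\kappa}}^{x}\left(A\right)$. This is still injective: $f$ is an embedding, hence recoverable on its image, and the measure on $\mathcal{U}_{\kappa}$ restricted along $f$ recovers the measure of $p_{\lambda}$, which in turn recovers $\lambda$ as above. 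A subtle point I would address explicitly is that $f$ is not canonical (it depends on choices in Fact \ref{fact:embedding-measures}), so the map as defined depends on the choice of $f$; for the injectivity and the conjugation statement this is harmless, but I would fix one such $f$ for each $\lambda$ so the map is well-defined.

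For the conjugation clause, suppose $\lambda,\lambda'$ have conjugate images via an elementary map $\pi:A\rightarrow A$ in the sense of Remark \ref{rem:extra-structure}, meaning there is a measure preserving partial isomorphism $\sigma$ of $\mathcal{U}_{\kappa}$ with $f\circ p_{\lambda}=\sigma\circ\left(\pi*\left(f'\circ p_{\lambda'}\right)\right)$. Applying the measure and using that $\sigma$ and the $f$'s preserve it, I would compute for each formula $\varphi\left(x,a\right)$ that $\nu_{\kappa}\left(f\left(p_{\lambda}\left(\varphi\left(x,a\right)\right)\right)\right)=\nu_{\kappa}\left(f'\left(p_{\lambda'}\left(\varphi\left(x,\pi^{-1}\left(a\right)\right)\right)\right)\right)$, and then use the measure-preservation of $f,f'$ and $\psi$ to rewrite both sides as $\lambda\left(\varphi\left(x,a\right)\right)$ and $\lambda'\left(\varphi\left(x,\pi^{-1}\left(a\right)\right)\right)$ respectively. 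This yields exactly the defining equation for $\lambda,\lambda'$ being conjugate Keisler measures via $\pi$.

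The main obstacle I anticipate is bookkeeping around the measure rather than any deep difficulty: I must be careful that the conjugating isomorphism in Remark \ref{rem:extra-structure} is required to preserve the measure, which is precisely what lets me push the conjugacy down to equality of measures of formulas; without that extra-structure convention the implication would fail, since a bare Boolean isomorphism need not respect $\nu_{\kappa}$. A secondary point worth checking is that the class of $\left[\varphi\left(x,a\right)\right]$ in $\Bb$ genuinely has $\Bb$-measure equal to $\lambda\left(\varphi\left(x,a\right)\right)$, which follows from the construction of $\Bb$ in Remark \ref{rem:generation-of-Keisler} as the measure algebra of the regular Borel extension of $\lambda$ restricted to clopen sets. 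Once these measure-preservation properties are in place the two-step composition and the conjugation transfer are routine.
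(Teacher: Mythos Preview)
Your proposal is correct and follows essentially the same approach as the paper: form the measure algebra $\Bb$ of Borel sets modulo $\lambda$-null sets, take the canonical projection $\psi$, embed $\Bb$ into $\mathcal{U}_{\kappa}$ via Fact~\ref{fact:embedding-measures}, and set $p_{\lambda}=f\circ\psi|_{L_{x}(A)}$; injectivity and the conjugation transfer both come from the single identity $\lambda(\varphi)=\nu_{\kappa}(p_{\lambda}(\varphi))$. Your explicit remarks about fixing a choice of $f$ for each $\lambda$ and about the necessity of the measure-preservation clause in Remark~\ref{rem:extra-structure} are accurate and merely make explicit what the paper leaves implicit.
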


\begin{proof}
Let $f$ and $\psi$ be as in Fact \ref{fact:embedding-measures},
using Remark \ref{rem:generation-of-Keisler}.

Let $p_{\lambda}:L_{x}\left(A\right)\to\mathcal{U}_{\kappa}$ be $f\circ\psi|_{L_{x}\left(A\right)}$
(where $L_{x}\left(A\right)$ is thought of as the algebra of clopen
subsets of $S^{x}\left(A\right)$). Then, $\lambda\mapsto p_{\lambda}$
would be our injection.

By choice of $f$, $\lambda\left(\varphi\right)=\nu_{\kappa}\left(p_{\lambda}\left(\varphi\right)\right)$,
which means that this map is indeed injective. It follows that if
$p_{\lambda_{1}}$ and $p_{\lambda_{2}}$ are conjugate (as in Remark
\ref{rem:extra-structure}, i.e., as $\left(\mathcal{U}_{\kappa},\nu_{\kappa}\right)$-types)
then $\lambda_{1}$ and $\lambda_{2}$ are conjugate:

Suppose that $\pi:A\to A$ is an elementary map and that $\sigma:B_{p_{\lambda_{1}}}\to B_{p_{\lambda_{2}}}$
is a measure preserving isomorphism such that $\sigma\circ\left(\pi*p_{\lambda_{1}}\right)=p_{\lambda_{2}}$.
Then 
\begin{align*}
\pi*\lambda_{1}\left(\varphi\left(x,a\right)\right) & =\lambda_{1}\left(\varphi\left(x,\pi^{-1}\left(a\right)\right)\right)=\nu_{\kappa}\left(p_{\lambda_{1}}\left(\varphi\left(x,\pi^{-1}\left(a\right)\right)\right)\right)\\
 & =\nu_{\kappa}\left(\sigma\left(p_{\lambda_{1}}\left(\varphi\left(x,\pi^{-1}\left(a\right)\right)\right)\right)\right)=\nu_{\kappa}\left(p_{\lambda_{2}}\left(\varphi\left(x,a\right)\right)\right)\\
 & =\lambda_{2}\left(\varphi\left(x,a\right)\right).
\end{align*}
\end{proof}

\subsection{\label{subsec:Counting-Keisler-Measures}Counting Keisler Measures}

In this subsection we count the number of Keisler measures up to elementary
conjugation similarly to Subsection \ref{subsec:Counting-Boolean-types}.
\begin{rem}
\label{rem:counting-extra-structure}Corollary \ref{cor:counting B types}
and all proofs leading to it still work when we add structure to the
algebra, but now we have to take into account the number of isomorphism
types of the new structure, i.e. the measure. Recall that in Subsection
\ref{subsec:Counting-Boolean-types} we defined $\mu=2^{<\kappa}+\left|T\right|+\left|x\right|$
(for $\mathcal{B}$ $\kappa$-c.c.), the maximal cardinality of the
image of any single $\mathcal{B}$-type when $T$ is NIP. In the case
of a measure algebra, the number of possible isomorphism types is
thus $\leq2^{\mu}+\left(2^{\aleph_{0}}\right)^{\mu}=2^{\mu}$ (the
isomorphism type of an measure algebra $B$ consists of the quantifier
free type of $B$ as in the proof of Corollary \ref{cor:counting}
and the list of values $\left\{ \lambda\left(\mathfrak{a}\right)\right\} _{\mathfrak{a}\in B}$).
Finally, note that every probability algebra is c.c.c (i.e., $\omega_{1}$-c.c.),
so in this case we can take $\kappa=\omega_{1}$ and then $\mu=2^{\aleph_{0}}+\left|T\right|+\left|x\right|$.
\end{rem}

By applying Proposition \ref{prop:attaching a general type to a measure},
Remark \ref{rem:counting-extra-structure} and Corollary \ref{cor:counting}
we get that:
\begin{cor}
\label{cor: final corollary for measures}Assume $T$ has $NIP$.
The number of Keisler measures in $x$ up to conjugation over a set
$A$ is bounded by the number of types in $S^{\left|x\right|\cdot\mu}\left(A\right)$
up to conjugation + $2^{\mu}$ where $\mu=2^{\aleph_{0}}+\left|T\right|+\left|x\right|$.

Hence, if again $\lambda,\varkappa$ are cardinals and $\alpha$ an
ordinal such that $\lambda=\lambda^{<\lambda}=\aleph_{\alpha}=\varkappa+\alpha\geq\varkappa\geq\beth_{\omega}+\mu^{+}$,
and $A=M$ is a saturated model of size $\lambda$, this number is
bounded by $2^{<\varkappa}+\left|\alpha\right|^{\mu}$.
\end{cor}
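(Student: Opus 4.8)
The plan is to deduce Corollary~\ref{cor: final corollary for measures} directly from the type-counting apparatus already established for Boolean types, using the measure algebra $\left(\mathcal{U}_{\kappa},\nu_{\kappa}\right)$ as the ambient $\mathcal{B}$. The key link is Proposition~\ref{prop:attaching a general type to a measure}: for $\kappa\geq\left|A\right|+\left|T\right|+\left|x\right|$ there is a conjugation-respecting injection $\lambda\mapsto p_{\lambda}$ from Keisler measures over $A$ into $S_{\mathcal{U}_{\kappa}}^{x}\left(A\right)$. Because this injection sends conjugate measures to conjugate types and, contrapositively, non-conjugate measures to types that are \emph{not} conjugate (as measure-algebra types, i.e.\ in the sense of Remark~\ref{rem:extra-structure}), the number of Keisler measures up to conjugation is at most the number of $\left(\mathcal{U}_{\kappa},\nu_{\kappa}\right)$-types up to conjugation. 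So it suffices to bound the latter.

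First I would fix the parameters for the Boolean-type count. Every probability algebra is c.c.c., so by Remark~\ref{rem:counting-extra-structure} I take $\kappa=\omega_{1}$, giving $\mu=2^{\aleph_{0}}+\left|T\right|+\left|x\right|$. Then I invoke Corollary~\ref{cor:counting} applied to $\mathcal{B}=\left(\mathcal{U}_{\kappa},\nu_{\kappa}\right)$, which bounds the number of $\mathcal{B}$-types up to conjugation by the number of types in $S^{\left|x\right|\cdot\mu}\left(A\right)$ up to conjugation plus the number of isomorphism types of the image algebras $B_{p}$. The one place where the measure structure intervenes is precisely this last count: the argument encoding a type as the pair (conjugacy class of $q_{p}$, isomorphism type of $B_{p}$) must now record the \emph{measure algebra} isomorphism type rather than the bare Boolean isomorphism type. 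By Remark~\ref{rem:counting-extra-structure} this number is $\leq 2^{\mu}+\left(2^{\aleph_{0}}\right)^{\mu}=2^{\mu}$, so the extra structure costs nothing beyond the $2^{\mu}$ already present in Corollary~\ref{cor:counting}. This yields the first bound: the number of Keisler measures up to conjugation is at most the number of types in $S^{\left|x\right|\cdot\mu}\left(A\right)$ up to conjugation $+\,2^{\mu}$.

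For the second, explicit bound I would specialize to $A=M$ a saturated model with $\lambda=\lambda^{<\lambda}=\aleph_{\alpha}=\varkappa+\alpha\geq\varkappa\geq\beth_{\omega}+\mu^{+}$ and feed the first bound into Shelah's counting theorem exactly as in the proof of Corollary~\ref{cor:counting}. Since $\mu\geq\left|T\right|$, \cite[Theorem 5.21]{Sh950} gives that the number of types in $S^{\mu}\left(M\right)$ up to conjugation is bounded by $2^{<\varkappa}+\left|\alpha\right|^{\mu}$, and adding the $2^{\mu}$ term is absorbed since $2^{\mu}\leq 2^{<\varkappa}$ under the hypothesis $\varkappa\geq\mu^{+}$. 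This gives the stated bound $2^{<\varkappa}+\left|\alpha\right|^{\mu}$.

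The genuinely new content is slight: essentially everything is a bookkeeping specialization of Subsection~\ref{subsec:Counting-Boolean-types} to $\mathcal{B}=\mathcal{U}_{\kappa}$, together with the observation that the measure-preservation requirement on conjugating isomorphisms is exactly what Proposition~\ref{prop:attaching a general type to a measure} and Proposition~\ref{prop:extending-partial-isomorphisms} are set up to handle. The only point needing care—and the one I would regard as the main obstacle—is verifying that the chain Corollary~\ref{cor:counting B types}$\to$Corollary~\ref{cor:counting} survives verbatim once ``isomorphism'' is upgraded to ``measure-preserving isomorphism'' throughout. Concretely, one must confirm that the ultrafilters $D_{B,i}$ and the enumerations $\mathfrak{a}_{B,i}$ can be chosen coherently across \emph{measure-algebra} conjugacy classes, and that the resulting count of such classes is still $\leq 2^{\mu}$; this is what Remark~\ref{rem:counting-extra-structure} asserts and I would simply cite it, but it is the step where the measure genuinely enters.
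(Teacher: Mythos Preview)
Your approach is exactly the paper's: the proof there is the single line invoking Proposition~\ref{prop:attaching a general type to a measure}, Remark~\ref{rem:counting-extra-structure} and Corollary~\ref{cor:counting}, and you have correctly unpacked precisely these three ingredients (including the absorption of $2^{\mu}$ into $2^{<\varkappa}$ via $\varkappa\geq\mu^{+}$). One expository slip worth fixing: the implication Proposition~\ref{prop:attaching a general type to a measure} actually supplies---and the one the counting needs---is \emph{conjugate $\left(\mathcal{U}_{\kappa},\nu_{\kappa}\right)$-types $\Rightarrow$ conjugate measures}, not the direction you state, so your ``contrapositively'' is misapplied even though the conclusion you draw from it is correct.
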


However, Keisler measures have been studied extensively in the context
of NIP (see for example \cite[Chapter 7]{pierrebook}), and there
are results which give a better bound. Indeed:
\begin{prop}
\label{prop:keisler-measures-in-NIP} Assume $T$ has $NIP$. Then
there is an injection from the set of Keisler measures in $x$ over
a set $A$ to the set of $2^{\mu}$-types over A where $\mu=\left|T\right|+\left|x\right|$.

Further, if the images of $\lambda,\lambda'$ are elementarily conjugate,
$\lambda$ and $\lambda'$ are conjugates.
\end{prop}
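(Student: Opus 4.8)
The plan is to use the VC theorem to encode each Keisler measure $\lambda$ by a \emph{fixed-length} sequence of approximating complete types, and then to pass through the identification $S_{2^{\mu}}^{x}\left(A\right)\cong\left(S^{x}\left(A\right)\right)^{\mu}$ of Example \ref{ex:set-algebra-types}. Fix a set $\Phi$ of representatives of partitioned formulas $\varphi\left(x,y\right)$; there are at most $\left|T\right|\leq\mu$ of them. View $\lambda$ as a regular Borel measure on the type space $S^{x}\left(A\right)$ as in Remark \ref{rem:generation-of-Keisler}. Since $T$ is NIP, for each $\varphi\in\Phi$ the family of clopen sets $\set{p\in S^{x}\left(A\right)}{\varphi\left(x,a\right)\in p}$ indexed by $a\in A$ has finite VC dimension, so by the VC theorem (see \cite{pierrebook}) there is, for each $k<\omega$, an integer $n\left(\varphi,k\right)$ depending only on $k$ and the VC dimension of $\varphi$ --- crucially \emph{not} on $\lambda$ --- together with complete types $r_{\varphi,k,1}^{\lambda},\ldots,r_{\varphi,k,n\left(\varphi,k\right)}^{\lambda}\in S^{x}\left(A\right)$ such that
\[
\left|\lambda\left(\varphi\left(x,a\right)\right)-\frac{1}{n\left(\varphi,k\right)}\left|\set{j}{\varphi\left(x,a\right)\in r_{\varphi,k,j}^{\lambda}}\right|\right|<\frac{1}{k}\quad\text{for all }a\in A.
\]
The index set $I=\set{\left(\varphi,k,j\right)}{\varphi\in\Phi,\,k<\omega,\,j\leq n\left(\varphi,k\right)}$ then has size $\leq\mu$ and is the same for every $\lambda$; fixing a bijection $I\cong\mu$ turns $\sequence{r_{i}^{\lambda}}{i\in I}$ into a $2^{\mu}$-type $p_{\lambda}\in S_{2^{\mu}}^{x}\left(A\right)$ via Example \ref{ex:set-algebra-types}. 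The assignment $\lambda\mapsto p_{\lambda}$ is our injection.

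To see injectivity, I would recover $\lambda$ from $p_{\lambda}$: letting $k\to\infty$ in the displayed inequality gives
\[
\lambda\left(\varphi\left(x,a\right)\right)=\lim_{k\to\infty}\frac{1}{n\left(\varphi,k\right)}\left|\set{j}{\varphi\left(x,a\right)\in r_{\varphi,k,j}^{\lambda}}\right|,
\]
and for $a\in A$ the right-hand side depends only on the complete types $r_{\varphi,k,j}^{\lambda}$, since whether $\varphi\left(x,a\right)\in r$ is determined by $r$. Hence if $p_{\lambda}=p_{\lambda'}$, then $r_{i}^{\lambda}=r_{i}^{\lambda'}$ for all $i\in I$, so $\lambda\left(\varphi\left(x,a\right)\right)=\lambda'\left(\varphi\left(x,a\right)\right)$ for all $\varphi,a$, i.e. $\lambda=\lambda'$.

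For the conjugation clause, suppose the images $p_{\lambda},p_{\lambda'}$ are elementarily conjugate via an elementary permutation $\pi:A\to A$, say $p_{\lambda'}=\pi\ast p_{\lambda}$. As in the proof of Lemma \ref{lem:counting-set-algebra-types}, elementary conjugation of $2^{\mu}$-types translates coordinatewise, so $r_{i}^{\lambda'}=\pi\ast r_{i}^{\lambda}$ for every $i\in I$; that is, $\varphi\left(x,a\right)\in r_{\varphi,k,j}^{\lambda'}\Longleftrightarrow\varphi\left(x,\pi^{-1}\left(a\right)\right)\in r_{\varphi,k,j}^{\lambda}$ for all $a\in A$. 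Substituting into the recovery formula yields
\[
\lambda'\left(\varphi\left(x,a\right)\right)=\lim_{k\to\infty}\frac{1}{n\left(\varphi,k\right)}\left|\set{j}{\varphi\left(x,\pi^{-1}\left(a\right)\right)\in r_{\varphi,k,j}^{\lambda}}\right|=\lambda\left(\varphi\left(x,\pi^{-1}\left(a\right)\right)\right),
\]
which is exactly the condition that $\lambda$ and $\lambda'$ are conjugate via $\pi$.

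I expect the main obstacle to lie entirely in the first step: one must invoke the VC theorem in the sharp form where the sample size $n\left(\varphi,k\right)$ depends only on $k$ and the (dual) VC dimension of $\varphi$, and not on the particular measure $\lambda$. This uniformity is what guarantees that the index set $I$, and hence the target algebra $2^{\mu}$, is genuinely the same for all Keisler measures; without it the encoding would not land in a fixed $S_{2^{\mu}}^{x}\left(A\right)$. Everything after that is bookkeeping: the recovery formula and the passage between measures, $2^{\mu}$-types, and sequences of complete types through Example \ref{ex:set-algebra-types} and Lemma \ref{lem:counting-set-algebra-types}.
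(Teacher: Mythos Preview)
Your proof is correct and follows essentially the same route as the paper's: the paper invokes \cite[Proposition~7.11]{pierrebook} (which is precisely the VC-theorem approximation you describe) together with \cite[Exercise~7.12]{pierrebook} for the uniformity of $n(\varphi,k)$ in $\lambda$, builds the same index set $I$, and uses the same recovery-by-limit formula for injectivity and conjugation. Your explicit identification of the uniformity of the sample size as the crux is exactly the point the paper isolates via the citation of Exercise~7.12.
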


\begin{proof}
Recall that given complete types $p_{0},...,p_{n-1}$, and a formula
$\phi\left(x,b\right)$, $Av\left(p_{0},..,p_{n-1};\phi\left(x,b\right)\right)$
is $\frac{\left|\left\{ i<n\mid\varphi\left(x,b\right)\in p_{i}\right\} \right|}{n}$.

Fix a Keisler measure $\lambda$ over $A$. By \cite[Proposition 7.11]{pierrebook},
(for $X_{1}=\top$) for any $m<\omega$ and partitioned formula $\phi\left(x,y\right)$
there exist $n_{m,\phi}<\omega$ and types $\left\langle p_{i}^{m,\phi}\left(x\right)\right\rangle _{i<n_{m,\phi}}\in S^{x}\left(A\right)$
such that for any $y$-tuple $b$, 
\[
\left|\lambda\left(\phi\left(x;b\right)\right)-Av\left(p_{0}^{m,\phi},...,p_{n_{m,\phi}-1}^{m,\phi};\phi\left(x,b\right)\right)\right|<\frac{1}{m}.
\]

Note that by \cite[Exercise 7.12]{pierrebook}, $n_{m,\phi}$ can
be chosen independently of $\lambda$.

Let $I=\left\{ m,\phi,i\mid m<\omega,\phi\in L_{x}\left(\emptyset\right),i<n_{\phi,m}\right\} $
(note that $\left|S\right|=\mu$). Choose $p_{\lambda}\in S_{2^{I}}^{x}\left(A\right)$
such that $p_{\lambda}\left(\phi\left(x,b\right)\right)_{m,\phi,i}=1$
iff $\phi\left(x,b\right)\in p_{i}^{m,\phi}$.

Then
\[
\stackrel[m\rightarrow\infty]{}{\lim}Av\left(\left(p_{\lambda}\right)_{m,\phi,0},...,\left(p_{\lambda}\right)_{m,\phi,n_{m,\phi}-1};\phi\left(x,b\right)\right)=\lambda\left(\phi\left(x,b\right)\right)
\]

And if $\pi*p_{\lambda_{1}}=p_{\lambda_{2}}$ we find that for any
$\phi\left(x,y\right)$ and $b$, 
\begin{align*}
\pi*\lambda_{1}\left(\phi\left(x,b\right)\right) & =\lambda_{1}\left(\phi\left(x,\pi^{-1}\left(b\right)\right)\right)=\stackrel[m\rightarrow\infty]{}{\lim}Av\left(\left(p_{\lambda_{1}}\right)_{m,\phi,0},...,\left(p_{\lambda_{1}}\right)_{m,\phi,n_{m,\phi}-1};\phi\left(x,\pi^{-1}\left(b\right)\right)\right)\\
 & \stackrel[m\rightarrow\infty]{}{\lim}Av\left(\left(p_{\lambda_{2}}\right)_{m,\phi,0},...,\left(p_{\lambda_{2}}\right)_{m,\phi,n_{m,\phi}-1};\phi\left(x,\pi^{-1}\left(b\right)\right)\right)=\lambda_{2}\left(\phi\left(x,b\right)\right).
\end{align*}
\end{proof}
Thus with Corollary \ref{cor:number-of-set-algebra-types} we get 
\begin{cor}
Assume $T$ has $NIP$. The number of Keisler measures up to conjugation
over a set $A$ is bounded by the number of types in $S^{\left|x\right|\cdot\mu}\left(A\right)$
up to conjugation where $\mu=\left|T\right|+\left|x\right|$ (with
the same explicit bound as in Corollary \ref{cor: final corollary for measures},
but replacing $\mu=2^{\aleph_{0}}+\left|T\right|+\left|x\right|$
with $\mu=\left|T\right|+\left|x\right|$). 
\end{cor}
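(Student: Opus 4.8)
The plan is to chain the injection of Proposition~\ref{prop:keisler-measures-in-NIP} with the counting bound of Corollary~\ref{cor:number-of-set-algebra-types}, exactly as the number of Boolean types was reduced to the number of complete types in Subsection~\ref{subsec:Counting-Boolean-types}. Set $\mu=\left|T\right|+\left|x\right|$. By Proposition~\ref{prop:keisler-measures-in-NIP} the assignment $\lambda\mapsto p_{\lambda}$ injects the Keisler measures in $x$ over $A$ into $S_{2^{\mu}}^{x}\left(A\right)$, with the crucial feature that whenever the images $p_{\lambda_{1}},p_{\lambda_{2}}$ are elementarily conjugate (say via $\pi$), the measures $\lambda_{1},\lambda_{2}$ are themselves conjugate.

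First I would check that this descends to an injection on conjugacy classes. Choosing a representative $\lambda$ from each conjugacy class of measures, send its class to the elementary-conjugacy class of $p_{\lambda}$ in $S_{2^{\mu}}^{x}\left(A\right)$. If two such images turn out to be elementarily conjugate, then by the second clause of Proposition~\ref{prop:keisler-measures-in-NIP} the corresponding measures are conjugate, so the two classes already coincide; hence the assignment is injective on classes. This gives that the number of Keisler measures up to conjugation is at most the number of elements of $S_{2^{\mu}}^{x}\left(A\right)$ up to elementary conjugation.

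Next I would feed this into Corollary~\ref{cor:number-of-set-algebra-types}, applied with its parameter cardinal equal to $\mu$, which bounds the number of $2^{\mu}$-types up to elementary conjugation by the number of types in $S^{\left|x\right|\cdot\mu}\left(A\right)$ up to conjugation. Composing the two inequalities yields the first assertion. For the explicit bound one specializes to $A=M$ a saturated model of size $\lambda$ under the stated cardinal arithmetic and invokes the estimate from Corollary~\ref{cor:counting} (ultimately \cite[Theorem~5.21]{Sh950}), giving $2^{<\varkappa}+\left|\alpha\right|^{\mu}$; the sole change from Corollary~\ref{cor: final corollary for measures} is the smaller value of $\mu=\left|T\right|+\left|x\right|$. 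The additive $+2^{\mu}$ term appearing there does not arise here, since passing directly to $2^{\mu}$-types via Proposition~\ref{prop:keisler-measures-in-NIP} bypasses the enumeration of the isomorphism type of the measure algebra altogether (and in any case $2^{\mu}\leq2^{<\varkappa}$ under the hypotheses).

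There is essentially no obstacle remaining: the substantive work lives in Proposition~\ref{prop:keisler-measures-in-NIP} (built on the VC-type approximation \cite[Proposition~7.11]{pierrebook}) and in Corollary~\ref{cor:number-of-set-algebra-types}. The one point deserving care is the \emph{direction} of the conjugation-respecting clause: Proposition~\ref{prop:keisler-measures-in-NIP} supplies the implication ``images elementarily conjugate $\Rightarrow$ measures conjugate,'' which is precisely what makes the induced map on conjugacy classes injective via a contrapositive argument, rather than the converse, which we neither have nor need.
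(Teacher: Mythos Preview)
Your proposal is correct and is exactly the argument the paper intends: the corollary is stated immediately after the line ``Thus with Corollary~\ref{cor:number-of-set-algebra-types} we get'', so the paper's proof is precisely the composition of Proposition~\ref{prop:keisler-measures-in-NIP} with Corollary~\ref{cor:number-of-set-algebra-types} that you spell out. Your remarks on the direction of the conjugation clause and on why the extra $+2^{\mu}$ term does not appear are accurate elaborations of this one-line derivation.
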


\begin{rem}
This bound improves upon the one in Corollary \ref{cor: final corollary for measures},
since $\mu$ is now potentially smaller (for small $\left|T\right|$
and $\left|x\right|$).
\end{rem}

\subsection{\label{subsec:Smooth-Keisler-Measures}Smooth Keisler Measures}

Recall that a Keisler measure $\lambda$ is \emph{smooth} if it has
a unique extension to any set containing its domain. In this subsection
we show that Proposition \ref{prop:attaching a general type to a measure}
preserves smoothness, that is the Boolean type is smooth iff the measure
is. We use this to recover the fact that in an NIP theory measures
can be extended to smooth ones.
\begin{lem}
\label{lem:extend-embedding}Assume $\lambda_{i}$ is a Keisler measure
over $A_{i}$ for $i\in\left\{ 1,2\right\} $ such that $A_{1}\subseteq A_{2}$
and $\lambda_{2}$ extends $\lambda_{1}$; let $\kappa_{i}=\left|A_{i}\right|+\left|T\right|+\left|x\right|$
and let $f_{i}:\mathcal{B}_{i}\rightarrow\mathcal{U}_{\kappa_{i}^{+}}$
be an embedding of measure algebras, where $\mathcal{B}_{i}$ is the
algebra of Borel subsets of $S^{x}\left(A_{i}\right)$ up to $\lambda_{i}$-measure
$0$ (see Remark \ref{rem:generation-of-Keisler}).

Fix an embedding $\iota$ of $\mathcal{U}_{\kappa_{1}^{+}}$ in $\mathcal{U}_{\kappa_{2}^{+}}$
(one exists by Fact \ref{fact:embedding-measures}).

Then $\cal B_{1}$ embeds canonically into $\cal B_{2}$ and there
is a measure algebra homomorphism $g:\mathcal{B}_{2}\rightarrow\mathcal{U}_{\kappa_{2}^{+}}$
that extends $\iota\circ f_{1}$ and such that $\nu_{\kappa_{2}^{+}}\circ f_{2}=\nu_{\kappa_{2}^{+}}\circ g$.
\end{lem}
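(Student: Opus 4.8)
The plan is to build $g$ as $\sigma\circ f_{2}$ for a suitable measure preserving automorphism $\sigma$ of $\mathcal{U}_{\kappa_{2}^{+}}$, obtained from Proposition \ref{prop:extending-partial-isomorphisms}. First I would construct the canonical embedding $j\colon\mathcal{B}_{1}\to\mathcal{B}_{2}$. Restriction of types gives a continuous surjection $r\colon S^{x}(A_{2})\to S^{x}(A_{1})$, $r(q)=q\upharpoonright A_{1}$; the preimage of the clopen set cut out by $\varphi(x,a)$ (for $a$ in $A_{1}$) is the corresponding clopen set in $S^{x}(A_{2})$. Since $\lambda_{2}$ extends $\lambda_{1}$, the pushforward $r_{*}\lambda_{2}$ agrees with $\lambda_{1}$ on clopen sets, hence on all Borel sets by the uniqueness in Remark \ref{rem:generation-of-Keisler}. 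Therefore $r^{-1}$ sends $\lambda_{1}$-null Borel sets to $\lambda_{2}$-null Borel sets, so $B\mapsto r^{-1}(B)$ descends to a measure preserving (in particular injective, by Fact \ref{fact:embedding-measures}) homomorphism $j\colon\mathcal{B}_{1}\to\mathcal{B}_{2}$.

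Now I have two measure preserving embeddings of $\mathcal{B}_{1}$ into $\mathcal{U}_{\kappa_{2}^{+}}$, namely $\iota\circ f_{1}$ and $f_{2}\circ j$. Write $\mathcal{A}_{1}$ and $\mathcal{A}_{2}$ for their respective images; each is a complete (closed) subalgebra of $\mathcal{U}_{\kappa_{2}^{+}}$ isomorphic as a measure algebra to $\mathcal{B}_{1}$. By Remark \ref{rem:generation-of-Keisler}, $\mathcal{B}_{1}$ is completely generated by its at most $\kappa_{1}$ clopen sets, so $\mathcal{A}_{2}$ is completely generated by a set of size at most $\kappa_{1}\leq\kappa_{2}<\kappa_{2}^{+}$. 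Consider the measure preserving partial isomorphism
\[
\theta=(\iota\circ f_{1})\circ(f_{2}\circ j)^{-1}\colon\mathcal{A}_{2}\to\mathcal{A}_{1}\subseteq\mathcal{U}_{\kappa_{2}^{+}}.
\]
Since its domain $\mathcal{A}_{2}$ is a complete subalgebra generated by fewer than $\kappa_{2}^{+}$ elements, Proposition \ref{prop:extending-partial-isomorphisms} (with $\kappa=\kappa_{2}^{+}$) extends $\theta$ to a measure preserving automorphism $\sigma$ of $\mathcal{U}_{\kappa_{2}^{+}}$.

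Finally I set $g=\sigma\circ f_{2}$. It is a composite of measure algebra homomorphisms, so $\nu_{\kappa_{2}^{+}}\circ g=\nu_{\kappa_{2}^{+}}\circ\sigma\circ f_{2}=\nu_{\kappa_{2}^{+}}\circ f_{2}$, giving the measure condition. For the extension property, for $\mathfrak{b}\in\mathcal{B}_{1}$ the element $(f_{2}\circ j)(\mathfrak{b})$ lies in $\mathcal{A}_{2}$, where $\sigma$ agrees with $\theta$, so $g(j(\mathfrak{b}))=\sigma((f_{2}\circ j)(\mathfrak{b}))=\theta((f_{2}\circ j)(\mathfrak{b}))=(\iota\circ f_{1})(\mathfrak{b})$; that is, $g$ extends $\iota\circ f_{1}$ along $j$. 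The main thing to check carefully is that $\mathcal{A}_{2}$ really is a closed subalgebra of $\mathcal{U}_{\kappa_{2}^{+}}$ completely generated by fewer than $\kappa_{2}^{+}$ elements, so that Proposition \ref{prop:extending-partial-isomorphisms} applies; this follows because measure algebra embeddings preserve all suprema by the c.c.c., carrying a complete generating set of $\mathcal{B}_{1}$ to one of $\mathcal{A}_{2}$. The remaining subtlety is the book-keeping with the null ideals ensuring $j$ is well defined, handled by the pushforward computation above.
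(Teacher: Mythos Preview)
Your proof is correct and follows essentially the same route as the paper's: both construct the canonical embedding $j$ via pullback along the restriction map $S^{x}(A_{2})\to S^{x}(A_{1})$, then invoke Proposition~\ref{prop:extending-partial-isomorphisms} on the partial measure-preserving isomorphism between the two images of $\mathcal{B}_{1}$ inside $\mathcal{U}_{\kappa_{2}^{+}}$. The only cosmetic difference is that the paper extends $f_{2}\circ(\iota\circ f_{1})^{-1}$ to an automorphism $\sigma$ and sets $g=\sigma^{-1}\circ f_{2}$, whereas you extend the inverse map and set $g=\sigma\circ f_{2}$; your write-up is in fact a bit more careful about why $j$ is well defined and why the image is a closed subalgebra.
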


\begin{proof}
Note first that since $\lambda_{2}$ extends $\lambda_{1}$, $\mathcal{B}_{1}$
can be embedded into $\mathcal{B}_{2}$ naturally with the preimage
of the projection map from $S^{x}\left(A_{2}\right)$ to $S^{x}\left(A_{1}\right)$.

Note that $\left(\iota\circ f_{1}\right)\left(\mathcal{B}_{1}\right)$
is generated as a ($\sigma$-)complete algebra by $\kappa_{1}$ elements
(as the image of such an algebra) and that it is the complete subalgebra
of $\mathcal{U}_{\kappa_{2}^{+}}$, generated as a complete subalgebra
by the images of the clopen sets over $A_{1}$ (see \cite[Proposition 324L]{MR2459668}).

By Proposition \ref{prop:extending-partial-isomorphisms} and as $\lambda_{2}$
extends $\lambda_{1}$, there is an automorphism $\sigma$ of $\left(\mathcal{U}_{\kappa_{2}^{+}},\mathcal{\nu}_{\kappa_{2}^{+}}\right)$
extending $f_{2}\circ\left(\iota\circ f_{1}\right)^{-1}$.

But now we get $\sigma^{-1}\circ f_{2}|_{\mathcal{B}_{1}}=\iota\circ f_{1}|_{\mathcal{B}_{1}}$,
thus $g=\sigma^{-1}\circ f_{2}$ is as required.
\end{proof}
\begin{cor}
\label{cor:smooth-type-implies-smooth-measure}If $p\in S_{\mathcal{U}_{\kappa^{+}}}^{x}\left(M\right)$
is smooth, then so is $\nu_{\kappa^{+}}\circ p$ for $\kappa\geq\left|M\right|+\left|T\right|+\left|x\right|$.
\end{cor}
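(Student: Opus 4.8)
The plan is to prove the contrapositive: assuming the Keisler measure $\lambda:=\nu_{\kappa^+}\circ p$ is not smooth, I will manufacture two distinct $\mathcal{U}_{\kappa^+}$-types over a common model extending $p$, contradicting the smoothness of $p$. First I would observe that non-smoothness of $\lambda$ is witnessed on a single formula: two extensions of $\lambda$ to $\mathfrak{C}$ disagree on some $\varphi(x,b)$ with $b$ a finite tuple, so choosing a model $N\succ M$ with $b\in N$ and $\left|N\right|\le\left|M\right|+\left|T\right|\le\kappa$ (L\"{o}wenheim--Skolem) and restricting, I get two Keisler measures $\lambda_1\ne\lambda_2$ over $N$, both extending $\lambda$, with $\lambda_1\left(\varphi\left(x,b\right)\right)\ne\lambda_2\left(\varphi\left(x,b\right)\right)$. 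Set $\kappa_1=\left|M\right|+\left|T\right|+\left|x\right|$ and $\kappa_2=\left|N\right|+\left|T\right|+\left|x\right|$, noting $\kappa_1,\kappa_2\le\kappa$, and let $\mathcal{B}_1$ be the algebra of Borel subsets of $S^{x}\left(M\right)$ modulo $\lambda$-null sets and $\mathcal{B}_2^{j}$ the corresponding algebra for $S^{x}\left(N\right)$ modulo $\lambda_j$-null sets (as in Remark \ref{rem:generation-of-Keisler}), with projections $\psi_1,\psi_2^{j}$.

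Next I would recover $p$ from $\mathcal{B}_1$. Since $\nu_{\kappa^+}$ is positive on nonzero elements and $\nu_{\kappa^+}\circ p=\lambda$, whenever $\psi_1\left(\vartheta\right)=\psi_1\left(\vartheta'\right)$ (i.e. $\vartheta,\vartheta'$ are $\lambda$-a.e. equal) we have $p\left(\vartheta\right)=p\left(\vartheta'\right)$; hence $p$ descends to a measure-preserving map on the clopen classes, and by the standard fact that such a map extends to the complete subalgebra it generates (cf. \cite[Proposition 324L]{MR2459668}) there is a measure-preserving embedding $f_1:\mathcal{B}_1\to\mathcal{U}_{\kappa^+}$ with $p=f_1\circ\psi_1$. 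Because $\lambda_j$ extends $\lambda$, $\mathcal{B}_1$ embeds canonically into $\mathcal{B}_2^{j}$ via the preimage $\iota_j$ of the projection $S^{x}\left(N\right)\to S^{x}\left(M\right)$, compatibly with the projections, so that $\psi_2^{j}\left(\vartheta\right)=\iota_j\left(\psi_1\left(\vartheta\right)\right)$ for $\vartheta\in L_x\left(M\right)$. Now I apply Lemma \ref{lem:extend-embedding}, whose proof (via Proposition \ref{prop:extending-partial-isomorphisms}) only requires that the relevant complete subalgebras be completely generated by $<\kappa^+$ elements --- which holds here since $\mathcal{B}_1$ and each $\mathcal{B}_2^{j}$ are generated by at most $\kappa<\kappa^+$ clopen classes --- so that, taking the common target to be $\mathcal{U}_{\kappa^+}$, I obtain for each $j$ a measure-preserving homomorphism $g_j:\mathcal{B}_2^{j}\to\mathcal{U}_{\kappa^+}$ extending $f_1$ (that is, $g_j\circ\iota_j=f_1$) with $\nu_{\kappa^+}\circ g_j=\bar\lambda_j$, the measure on $\mathcal{B}_2^{j}$.

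Finally, set $q_j:=g_j\circ\psi_2^{j}\in S_{\mathcal{U}_{\kappa^+}}^{x}\left(N\right)$. For $\vartheta\in L_x\left(M\right)$ we get $q_j\left(\vartheta\right)=g_j\left(\iota_j\left(\psi_1\left(\vartheta\right)\right)\right)=f_1\left(\psi_1\left(\vartheta\right)\right)=p\left(\vartheta\right)$, so both $q_1,q_2$ extend $p$; and for $\chi\in L_x\left(N\right)$, $\nu_{\kappa^+}\left(q_j\left(\chi\right)\right)=\bar\lambda_j\left(\psi_2^{j}\left(\chi\right)\right)=\lambda_j\left(\chi\right)$, so $\nu_{\kappa^+}\circ q_j=\lambda_j$. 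Since $\lambda_1\left(\varphi\left(x,b\right)\right)\ne\lambda_2\left(\varphi\left(x,b\right)\right)$, positivity of $\nu_{\kappa^+}$ forces $q_1\left(\varphi\left(x,b\right)\right)\ne q_2\left(\varphi\left(x,b\right)\right)$, so $q_1\ne q_2$ are two distinct extensions of $p$ over $N$, contradicting smoothness. I expect the main obstacle to be the bookkeeping that makes Lemma \ref{lem:extend-embedding} apply with the single target $\mathcal{U}_{\kappa^+}$ (rather than the $\mathcal{U}_{\kappa_i^+}$ of its statement) while guaranteeing that the lifted homomorphisms $g_j$ genuinely restrict to $p$ on $L_x\left(M\right)$; everything else is the positivity of $\nu_{\kappa^+}$ on nonzero elements, which is exactly what lets a measure disagreement between $\lambda_1,\lambda_2$ propagate to a genuine disagreement of the Boolean types $q_1,q_2$.
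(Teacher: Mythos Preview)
Your argument is correct and follows essentially the same route as the paper: prove the contrapositive, reduce to a small $N$ by L\"{o}wenheim--Skolem, factor $p$ through the measure algebra $\mathcal{B}_1$ of $S^{x}(M)$ to get a measure-preserving $f_1:\mathcal{B}_1\to\mathcal{U}_{\kappa^+}$ with $p=f_1\circ\psi_1$, then use Lemma~\ref{lem:extend-embedding} to lift $f_1$ to each $\mathcal{B}_2^{j}$ and compose with the projections to obtain two distinct extensions of $p$. One minor point: the extension of $p$ from the clopen classes to all of $\mathcal{B}_1$ is provided by \cite[Propositions 324O and 323J]{MR2459668} rather than 324L (which concerns the image of a measure-preserving map); and your anticipated ``obstacle'' about targeting $\mathcal{U}_{\kappa^+}$ is handled in the paper simply by arranging $\left|N\right|\le\left|M\right|$, so that $\kappa_1=\kappa_2=\kappa$ and Lemma~\ref{lem:extend-embedding} applies verbatim.
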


\begin{proof}
Assume $\lambda=\nu_{\kappa^{+}}\circ p$ is not smooth. So there
is $N\supseteq M$ and distinct $\lambda_{1},\lambda_{2}$ over $N$
extending $\lambda$, and hence there is one of cardinality at most
$\left|M\right|$ by L\"{o}wenheim Skolem (restrict $\lambda_{1},\lambda_{2}$
to a smaller model containing $M$ and some $b$, for which exists
$\varphi\left(x,b\right)$ such that $\lambda_{1}\left(\varphi\left(x,b\right)\right)\neq\lambda_{2}\left(\varphi\left(x,b\right)\right)$).

Let $\mathcal{B}_{0}$ be the measure algebra of Borel sets in $S^{x}\left(M\right)$
up to $\lambda$-measure $0$ and $I$ the ideal of sets of $\lambda$-measure
$0$ in the algebra of Borel subsets of $S^{x}\left(M\right)$ (see
Remark \ref{rem:generation-of-Keisler}). Then since $\nu_{\kappa^{+}}\left(\mathfrak{a}\right)=0\Longleftrightarrow\mathfrak{a}=0$,
$I\cap L_{x}\left(M\right)=\ker\left(p\right)$ thus $L_{x}\left(M\right)/\ker\left(p\right)$
is naturally embedded in $\mathcal{B}_{0}$.

Write $p=\widetilde{p}\circ\pi$ for $\pi:L_{x}\left(M\right)\rightarrow L_{x}\left(M\right)/\ker\left(p\right)$
the projection. 

Then $\widetilde{p}:L_{x}\left(M\right)/\ker\left(p\right)\rightarrow\mathcal{U}_{\kappa^{+}}$
is a measure preserving function from a subalgebra of $\mathcal{B}_{0}$,
and the complete (in Fremlin's terminology, order-closed) subalgebra
of $\mathcal{B}_{0}$ generated by $L_{x}\left(M\right)/\ker\left(p\right)$
is $\mathcal{B}_{0}$ itself. Thus by \cite[Proposition 324O and Proposition 323J]{MR2459668}
$\widetilde{p}$ has a unique extension to a measure preserving $\overline{p}:\mathcal{B}_{0}\rightarrow\mathcal{U}_{\kappa^{+}}$,
that is $\lambda=\nu_{\kappa^{+}}\circ\overline{p}$ (here we treat
$\lambda$ as a measure on $S^{x}\left(M\right)$). 

For each $i\in\left\{ 1,2\right\} $, we do the following. Let $\mathcal{B}_{i}$
be the measure algebra of Borel subsets of $S^{x}\left(N\right)$
up to $\lambda_{i}$-measure $0$, which naturally embeds $\mathcal{B}_{0}$
as a subalegbra via the preimage map of the projections $S^{x}\left(N\right)\to S^{x}\left(M\right)$
(this uses the fact that $\lambda_{i}$ extends $\lambda_{0}$). Take
an embedding $f_{i}:\mathcal{B}_{i}\rightarrow\mathcal{U}_{\kappa^{+}}$
for $\lambda_{i}$ guaranteed by Fact \ref{fact:embedding-measures},
and using Lemma \ref{lem:extend-embedding} we can find a measure
preserving $g_{i}:\mathcal{B}_{i}\rightarrow\mathcal{U}_{\kappa^{+}}$
extending $\overline{p}$ such that $\lambda_{i}=\nu_{\kappa^{+}}\circ g_{i}$.

Let $\pi_{i}:L_{x}\left(N\right)\rightarrow\mathcal{B}_{i}$ be the
projection, note that it extends $\pi$. Then for $p_{i}=g_{i}\circ\pi_{i}\in S_{\mathcal{U}_{\kappa^{+}}}^{x}\left(N\right)$
we find $\lambda_{i}=\nu_{\kappa^{+}}\circ p_{i}$ (when we consider
$\lambda_{i}$ as Keisler measures), and since $\pi_{i}$ extends
$\pi$ and $g_{i}$ extend $\overline{p}$ (thus $\widetilde{p}$)
we find $p_{1},p_{2}$ extend $p$, but they are distinct since $\lambda_{1}$
and $\lambda_{2}$ are distinct.

We conclude that $p$ is not smooth.
\end{proof}
On the other hand:
\begin{prop}
\label{cor:smooth-measure-implies-smooth-type}Assume $\lambda$ is
a smooth Keisler measure in $x$ over $M$. Let $p\in S_{\mathcal{U}_{\kappa^{+}}}^{x}\left(M\right)$
such that $\lambda=\nu_{\kappa^{+}}\circ p$ for $\kappa\geq\left|M\right|+\left|T\right|+\left|x\right|$.
Then $p$ is smooth.
\end{prop}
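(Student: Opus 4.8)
The plan is to prove the contrapositive: assuming $p$ is \emph{not} smooth, I would manufacture two distinct Keisler measures over some $N\supseteq M$ both extending $\lambda$, contradicting smoothness of $\lambda$. So suppose there are $N\supseteq M$ and distinct $q_1,q_2\in S_{\mathcal{U}_{\kappa^{+}}}^{x}\left(N\right)$ extending $p$, and fix $\varphi\left(x,a\right)$ with $a$ in $N$ witnessing $q_1\left(\varphi\left(x,a\right)\right)\neq q_2\left(\varphi\left(x,a\right)\right)$. After possibly swapping $q_1$ and $q_2$ I may assume $\mathfrak{c}:=q_1\left(\varphi\left(x,a\right)\right)-q_2\left(\varphi\left(x,a\right)\right)\neq0$. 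Since $\nu_{\kappa^{+}}$ is strictly positive on a measure algebra (Definition \ref{def:A-measure-algebra}), setting $\beta:=\nu_{\kappa^{+}}\left(\mathfrak{c}\right)$ we have $\beta>0$.

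The naive idea of simply pushing $q_1$ and $q_2$ forward to $\nu_{\kappa^{+}}\circ q_1$ and $\nu_{\kappa^{+}}\circ q_2$ does not suffice, since these two measures might coincide even though $q_1\neq q_2$; this is the main obstacle, and the key move is to splice the two homomorphisms along $\mathfrak{c}$. Concretely I would define $m:L_{x}\left(N\right)\rightarrow\left[0,1\right]$ by
\[
m\left(\psi\right)=\nu_{\kappa^{+}}\left(q_1\left(\psi\right)\cdot\mathfrak{c}\right)+\nu_{\kappa^{+}}\left(q_2\left(\psi\right)\cdot\left(-\mathfrak{c}\right)\right).
\]
Meeting with the fixed element $\mathfrak{c}$ (respectively $-\mathfrak{c}$) preserves disjointness, so if $\psi_1\wedge\psi_2$ is inconsistent then $q_i\left(\psi_1\right)\cdot\mathfrak{c}$ and $q_i\left(\psi_2\right)\cdot\mathfrak{c}$ are disjoint and $m$ is finitely additive; together with $\nu_{\kappa^{+}}\left(\mathfrak{c}\right)+\nu_{\kappa^{+}}\left(-\mathfrak{c}\right)=1$ this makes $m$ a Keisler measure.

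It remains to check two things. First, $m$ extends $\lambda$: for $\psi$ over $M$ we have $q_1\left(\psi\right)=q_2\left(\psi\right)=p\left(\psi\right)$, whence $m\left(\psi\right)=\nu_{\kappa^{+}}\left(p\left(\psi\right)\right)=\lambda\left(\psi\right)$. Second, $m$ differs from the honest extension $\mu_2:=\nu_{\kappa^{+}}\circ q_2$ precisely on $\varphi\left(x,a\right)$: since $\mathfrak{c}\leq q_1\left(\varphi\left(x,a\right)\right)$ we get $q_1\left(\varphi\left(x,a\right)\right)\cdot\mathfrak{c}=\mathfrak{c}$, and since $\mathfrak{c}\leq-q_2\left(\varphi\left(x,a\right)\right)$ we get $q_2\left(\varphi\left(x,a\right)\right)\cdot\mathfrak{c}=0$, so $q_2\left(\varphi\left(x,a\right)\right)\cdot\left(-\mathfrak{c}\right)=q_2\left(\varphi\left(x,a\right)\right)$. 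Hence $m\left(\varphi\left(x,a\right)\right)=\beta+\nu_{\kappa^{+}}\left(q_2\left(\varphi\left(x,a\right)\right)\right)=\mu_2\left(\varphi\left(x,a\right)\right)+\beta>\mu_2\left(\varphi\left(x,a\right)\right)$. Thus $m$ and $\mu_2$ are distinct Keisler measures over $N$ extending $\lambda$, contradicting smoothness of $\lambda$ and finishing the proof. The only delicate point is the splicing construction of $m$; once it is in place the remaining verifications are routine manipulations in the Boolean algebra $\mathcal{U}_{\kappa^{+}}$.
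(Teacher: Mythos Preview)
Your proof is correct and takes a genuinely different, more elementary route than the paper's.

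The paper argues as follows: given two distinct extensions $p_1,p_2$ of $p$, it first invokes Claim~\ref{claim:non-conjugate-extensions} to replace them by extensions that are not related by any automorphism of $\mathcal{U}_{\kappa^{+}}$. It then uses that $\lambda$ is smooth to conclude $\nu_{\kappa^{+}}\circ p_1=\nu_{\kappa^{+}}\circ p_2$, so both $p_i$ have the same kernel and factor as $f_i\circ\pi$ for measure-preserving embeddings $f_i$; finally it extends $f_2\circ f_1^{-1}$ to an automorphism of $\mathcal{U}_{\kappa^{+}}$ via Proposition~\ref{prop:extending-partial-isomorphisms} (which uses the Maharam-type homogeneity of $\mathcal{U}_{\kappa^{+}}$), contradicting the non-conjugacy. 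Your splicing construction bypasses all of this: you directly manufacture a second Keisler measure $m$ extending $\lambda$ by gluing $q_1$ over $\mathfrak{c}$ and $q_2$ over $-\mathfrak{c}$, and compare it to $\nu_{\kappa^{+}}\circ q_2$. This avoids both Claim~\ref{claim:non-conjugate-extensions} and Proposition~\ref{prop:extending-partial-isomorphisms}, and in fact works for \emph{any} probability algebra $(\mathcal{B},\nu)$ in place of $(\mathcal{U}_{\kappa^{+}},\nu_{\kappa^{+}})$, not just Maharam-homogeneous ones. The paper's approach, on the other hand, fits its overall narrative of relating conjugacy of Boolean types to conjugacy of measures, and reuses machinery already developed for other purposes.
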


\begin{proof}
Let $p_{1},p_{2}\in S_{\mathcal{U}_{\kappa^{+}}}^{x}\left(N\right)$
be distinct types extending $p$ (for $N\supseteq M$). Again, without
loss of generality $\left|N\right|\leq\kappa$. By Claim \ref{claim:non-conjugate-extensions}
we can choose $p_{1}$ and $p_{2}$ such that for no automorphism
$\sigma$ of $\mathcal{U}_{\kappa^{+}}$, $\sigma\circ p_{1}=p_{2}$.

Let $\lambda'$ the unique extension of $\lambda$ to $N$; in particular
$\lambda'=\nu_{\kappa^{+}}\circ p_{i}$ for $i=1,2$. Since $\ker\left(p_{i}\right)=\left\{ \varphi\left(x,b\right)\mid\lambda'\left(\varphi\left(x,b\right)\right)=0\right\} $,
$\mathcal{B}=L_{x}\left(M\right)/\ker\left(p_{i}\right)$ is independent
of $i$. Let $\mathcal{B}'$ the probability algebra of Borel sets
in $S^{x}\left(M\right)$ up to measure $0$.

We conclude that both $p_{i}$'s can be written as $f_{i}\circ\pi$
where $\pi$ is the projection from $L_{x}\left(N\right)$ to the
algebra $\mathcal{B}$, and $f_{i}:\mathcal{B}\rightarrow Im\left(f_{i}\right)$
are measure preserving embeddings (note $Im\left(f_{i}\right)$ is
a complete subalgebra, see \cite[Proposition 324L]{MR2459668}), and
we can extend them uniquely to $\mathcal{B}'$ by \cite[Proposition 324O and Proposition 323J]{MR2459668},
like in the proof of Corollary \ref{cor:smooth-type-implies-smooth-measure}. 

We get $f_{2}\circ f_{1}^{-1}$ is a partial measure preserving isomorphism
from $Im\left(f_{1}\right)$ into $\mathcal{U}_{\kappa^{+}}$ (which
can, by Proposition \ref{prop:extending-partial-isomorphisms}, be
extended to an automorphism) and $f_{2}\circ f_{1}^{-1}\circ p_{1}=f_{2}\circ\pi=p_{2}$,
contradiction. 
\end{proof}
By \cite[Theorem 322Ca-c]{MR2459668}, every probability algebra is
complete as a Boolean algebra (Dedekind complete, in his terminology). 

Thus by choosing a sufficiently large $\kappa$ (at least $\left(2^{\aleph_{0}}\right)^{+}$)
in Proposition \ref{prop:attaching a general type to a measure} and
using Corollary \ref{cor:smooth-extension} we get, recovering \cite[Proposition 7.9]{pierrebook}:
\begin{cor}
Every Keisler measure over an NIP theory can be extended to a smooth
measure.
\end{cor}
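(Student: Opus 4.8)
The plan is to run the smooth-extension theorem for Boolean types (Corollary \ref{cor:smooth-extension}) through the measure-to-type correspondence of Proposition \ref{prop:attaching a general type to a measure}, and then pull smoothness back down to the measure via Corollary \ref{cor:smooth-type-implies-smooth-measure}. Fix a Keisler measure $\lambda$ in $x$ over a model $M$ of the NIP theory $T$.

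First I would fix the index of the target measure algebra in advance, anticipating the size of the model over which the smooth extension will live. Because $\mathcal{U}_{\eta}$ is a probability algebra it is c.c.c., that is $\omega_{1}$-c.c., so the chain condition entering the bound of Corollary \ref{cor:smooth-extension} is $\omega_{1}$ regardless of $\eta$, and that bound reads $\left|N\right|\le\left|M\right|+\left|x\right|+2^{<\omega_{1}}+\left|T\right|=\left|M\right|+\left|x\right|+2^{\aleph_{0}}+\left|T\right|$. Accordingly set $\theta=\left|M\right|+\left|x\right|+2^{\aleph_{0}}+\left|T\right|$ and work with $\left(\mathcal{U}_{\theta^{+}},\nu_{\theta^{+}}\right)$; note $\theta^{+}\ge\left|M\right|+\left|T\right|+\left|x\right|$, so Proposition \ref{prop:attaching a general type to a measure} is applicable.

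The main steps are then: (i) by Proposition \ref{prop:attaching a general type to a measure}, attach to $\lambda$ a type $p=p_{\lambda}\in S_{\mathcal{U}_{\theta^{+}}}^{x}\left(M\right)$ with $\lambda\left(\varphi\right)=\nu_{\theta^{+}}\left(p\left(\varphi\right)\right)$ for every $\varphi$ over $M$; (ii) since every probability algebra is complete as a Boolean algebra (\cite[Theorem 322Ca-c]{MR2459668}), apply Corollary \ref{cor:smooth-extension} to obtain $N\succ M$ with $\left|N\right|\le\theta$ and a smooth $q\in S_{\mathcal{U}_{\theta^{+}}}^{x}\left(N\right)$ extending $p$; (iii) apply Corollary \ref{cor:smooth-type-implies-smooth-measure} with $\kappa=\theta$ (legitimate because $\left|N\right|\le\theta$ gives $\theta\ge\left|N\right|+\left|T\right|+\left|x\right|$, and the index $\theta^{+}$ is of the required successor form) to conclude that $\nu_{\theta^{+}}\circ q$ is a smooth Keisler measure over $N$. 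Finally, since $q$ extends $p$, for any $\varphi\left(x,a\right)$ over $M$ one has $\left(\nu_{\theta^{+}}\circ q\right)\left(\varphi\left(x,a\right)\right)=\nu_{\theta^{+}}\left(p\left(\varphi\left(x,a\right)\right)\right)=\lambda\left(\varphi\left(x,a\right)\right)$, so $\nu_{\theta^{+}}\circ q$ is a smooth extension of $\lambda$.

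I expect the only real obstacle to be the successor-cardinal bookkeeping. Corollary \ref{cor:smooth-type-implies-smooth-measure} is stated only for indices $\kappa^{+}$ with $\kappa\ge\left|N\right|+\left|T\right|+\left|x\right|$, where $N$ is the a priori larger model carrying the smooth type, so the index must be chosen before the smooth extension is produced and must remain admissible afterwards. What makes this non-circular is precisely that the c.c.c.\ of probability algebras bounds $\left|N\right|$ by $\theta$ independently of the chosen index, so the single choice $\theta^{+}$ serves both Proposition \ref{prop:attaching a general type to a measure} at the outset and Corollary \ref{cor:smooth-type-implies-smooth-measure} at the end.
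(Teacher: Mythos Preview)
Your proof is correct and follows exactly the paper's own argument: choose $\kappa=\theta^{+}$ with $\theta=\left|M\right|+\left|x\right|+2^{\aleph_{0}}+\left|T\right|$, lift $\lambda$ to a $\mathcal{U}_{\kappa}$-type via Proposition \ref{prop:attaching a general type to a measure}, extend to a smooth Boolean type via Corollary \ref{cor:smooth-extension} (using completeness and c.c.c.\ of probability algebras), and push smoothness down via Corollary \ref{cor:smooth-type-implies-smooth-measure}. Your cardinal bookkeeping is handled more explicitly than in the paper, but the route is identical.
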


\begin{proof}
Take a Keisler measure $\lambda$ over a model $M$ in $x$. Let $\kappa=\left(2^{\aleph_{0}}+\left|T\right|+\left|x\right|+\left|M\right|\right)^{+}$.
Then by Proposition \ref{prop:attaching a general type to a measure}
there exists a type $p\in S_{\mathcal{U}_{\kappa}}^{x}\left(M\right)$
such that $\lambda=\nu_{\kappa}\circ p$.

By Corollary \ref{cor:smooth-extension} there exists a model $N\supseteq M$
of cardinality at most $2^{\aleph_{0}}+\left|T\right|+\left|x\right|+\left|M\right|$
and a smooth extension $q\in S_{\mathcal{U}_{\kappa}}^{x}\left(N\right)$
of $p$.

Thus letting $\lambda'=\nu_{\kappa}\circ q$, we find $\lambda'$
extends $\lambda$ thus by Corollary \ref{cor:smooth-type-implies-smooth-measure}
we are done.
\end{proof}
\begin{rem}
By \cite[Lemma 7.8]{pierrebook} and the remark following it, if $N\supseteq M$
is an extension and $\lambda$ over $N$ is a smooth extension of
$\mu$ over $M$, then there is some $M\subseteq N'\subseteq N$ such
that $\lambda|_{N'}$ is still smooth and $\left|N'\right|\leq\left|M\right|+\left|T\right|$.

Indeed the lemma and the remark state that $\lambda$ is smooth iff
for every $\varepsilon>0$ and $\phi\left(x,y\right)$ exist $\psi_{i}\left(y\right),\theta_{i}^{1}\left(x\right),\theta_{i}^{2}\left(x\right)$
($i=1,...,n$) over $N$ such that:

1. $\left(\stackrel[i=1]{n}{\bigvee}\psi_{i}\left(y\right)\right)=\top$
.

2. if $b$ is a $y$-tuple in $\mathfrak{C}$ and $\psi_{i}\left(b\right)$
holds then $\theta_{i}^{1}\left(x\right)\rightarrow\phi\left(x,b\right)\rightarrow\theta_{i}^{2}\left(x\right)$.

3. $\lambda\left(\theta_{i}^{2}\left(x\right)\right)-\lambda\left(\theta_{i}^{1}\left(x\right)\right)<\varepsilon$.

Thus we can take an elementary substructure $N'$ of $N$ containing
$M$ and all parameters in the formulas $\theta_{i}^{j},\psi_{i}$
mentioned in the lemma for each $\varepsilon=\frac{1}{n}$ and $\phi\left(x;y\right)$;
then all the formulas are over $N'$ and retain the required properties,
ensuring that $\lambda|_{N'}$ is still smooth.
\end{rem}

The proof of \cite[Lemma 7.8]{pierrebook} relies on the following
fact:
\begin{fact}
\cite[Lemma 7.4]{pierrebook} Let $\phi\left(x,b\right)$ a formula
($b\in\mathfrak{C}$), let $\lambda$ a Keisler measure in $x$ over
$M$. Let:

\begin{align*}
r_{1}=\sup & \left\{ \lambda\left(\varphi\left(x,a\right)\right)\mid\varphi\left(x,a\right)\in L_{x}\left(M\right),\mathfrak{C}\vDash\varphi\left(x,a\right)\rightarrow\phi\left(x,b\right)\right\} \text{ and}\\
r_{2}=\inf & \left\{ \lambda\left(\varphi\left(x,a\right)\right)\mid\varphi\left(x,a\right)\in L_{x}\left(M\right),\mathfrak{C}\vDash\phi\left(x,b\right)\rightarrow\varphi\left(x,a\right)\right\} .
\end{align*}

Then there exists an extension $\lambda'$ of $\lambda$ to $\mathfrak{C}$
such that $\lambda\left(\phi\left(x,b\right)\right)=r$ iff $r_{1}\leq r\leq r_{2}$.
\end{fact}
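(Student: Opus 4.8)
The plan is to prove both implications, with the forward (necessity) direction being routine and the reverse (existence) direction carrying all the content. For necessity, suppose $\lambda'$ is an extension of $\lambda$ with $\lambda'\left(\phi\left(x,b\right)\right)=r$. Finite additivity gives monotonicity, so whenever $\mathfrak{C}\vDash\varphi\left(x,a\right)\rightarrow\phi\left(x,b\right)$ we have $\lambda\left(\varphi\left(x,a\right)\right)=\lambda'\left(\varphi\left(x,a\right)\right)\leq\lambda'\left(\phi\left(x,b\right)\right)=r$; taking the supremum yields $r_{1}\leq r$, and symmetrically $r\leq r_{2}$. I would also record the inequality $r_{1}\leq r_{2}$ directly: if $\varphi_{1}\rightarrow\phi\left(x,b\right)$ and $\phi\left(x,b\right)\rightarrow\varphi_{2}$ then $\varphi_{1}\rightarrow\varphi_{2}$, hence $\lambda\left(\varphi_{1}\right)\leq\lambda\left(\varphi_{2}\right)$, and sup/inf give $r_{1}\leq r_{2}$. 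This is exactly the consistency condition needed below.

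For existence, fix $r$ with $r_{1}\leq r\leq r_{2}$. The strategy is to first extend $\lambda$ to the Boolean subalgebra $\mathcal{A}$ of $L_{x}\left(\mathfrak{C}\right)$ generated by $L_{x}\left(M\right)$ together with the single element $\phi\left(x,b\right)$, assigning $\phi\left(x,b\right)$ measure $r$, and then extend this finitely additive measure from $\mathcal{A}$ to all of $L_{x}\left(\mathfrak{C}\right)$. The second step is automatic and requires no model theory: the set of finitely additive probability measures on a Boolean algebra restricting to a prescribed one on a subalgebra is always nonempty, by the standard extension theorem for finitely additive measures (a Hahn--Banach/compactness argument, the measure analogue of Sikorski extension in Fact \ref{fact:homomorphism-extension}). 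So the real work is confined to the subalgebra $\mathcal{A}$.

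To handle $\mathcal{A}$ I would build the two extremal extensions and interpolate. With the inner and outer approximations $\lambda_{*}\left(D\right)=\sup\left\{ \lambda\left(\chi\right)\mid\chi\in L_{x}\left(M\right),\chi\rightarrow D\right\}$ and $\lambda^{*}\left(D\right)=1-\lambda_{*}\left(\neg D\right)$, one has $\lambda_{*}\left(\phi\left(x,b\right)\right)=r_{1}$ and $\lambda^{*}\left(\phi\left(x,b\right)\right)=r_{2}$. I would construct one extension $\lambda'_{1}$ on $\mathcal{A}$ with $\lambda'_{1}\left(\phi\left(x,b\right)\right)=r_{1}$ and one $\lambda'_{2}$ with $\lambda'_{2}\left(\phi\left(x,b\right)\right)=r_{2}$, using that every element of $\mathcal{A}$ has the normal form $u=\left(\alpha\wedge\phi\left(x,b\right)\right)\vee\left(\beta\wedge\neg\phi\left(x,b\right)\right)$ with $\alpha,\beta\in L_{x}\left(M\right)$. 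Then for $r=t r_{1}+\left(1-t\right)r_{2}$ with $t\in\left[0,1\right]$, the convex combination $\lambda'=t\lambda'_{1}+\left(1-t\right)\lambda'_{2}$ is again a finitely additive probability measure extending $\lambda$, with $\lambda'\left(\phi\left(x,b\right)\right)=r$, completing the construction on $\mathcal{A}$.

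The main obstacle is the well-definedness and finite additivity of the extremal extensions $\lambda'_{i}$ on $\mathcal{A}$, since the pair $\left(\alpha,\beta\right)$ representing $u$ is far from unique: two pairs give the same element exactly when the symmetric difference of the $\alpha$'s implies $\neg\phi\left(x,b\right)$ and that of the $\beta$'s implies $\phi\left(x,b\right)$. Verifying that the proposed value is insensitive to this ambiguity is precisely where the cut condition $r_{1}\leq r\leq r_{2}$ (equivalently $\lambda_{*}\left(\phi\right)\leq r\leq\lambda^{*}\left(\phi\right)$) enters, together with finite additivity of $\lambda$ on $L_{x}\left(M\right)$; this is the measure-theoretic counterpart of the homomorphism extension criterion in Fact \ref{fact:homomorphism-extension}. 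Once well-definedness is established, checking finite additivity of each $\lambda'_{i}$ on disjoint unions written in normal form is a routine computation, and the convex combination and the final extension to $L_{x}\left(\mathfrak{C}\right)$ then finish the proof.
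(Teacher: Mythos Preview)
Your argument is correct and is essentially the classical direct proof. The paper, however, does not prove this statement itself: it is stated as a cited Fact from Simon, and in the remark immediately following it the authors instead sketch an alternative derivation through their own machinery. Namely, they represent $\lambda$ as $\nu_{\kappa}\circ p$ for a $\mathcal{U}_{\kappa}$-type $p$ (Proposition~\ref{prop:attaching a general type to a measure}), apply Sikorski's extension criterion (Fact~\ref{fact:homomorphism-extension}) to extend $p$ so that $\phi(x,b)$ is sent to $\bigl(\sum_{\psi\to\phi}p(\psi)\bigr)+\mathfrak{b}$ for any $\mathfrak{b}$ below $\bigl(\prod_{\phi\to\psi}p(\psi)\bigr)-\bigl(\sum_{\psi\to\phi}p(\psi)\bigr)$, and then use atomlessness of $\mathcal{U}_{\kappa}$ to choose $\mathfrak{b}$ with $\nu_{\kappa}(\mathfrak{b})=r-r_{1}$; Remark~\ref{rem:proof-lemma-7.4} is what identifies the measures of those Boolean bounds with $r_{1}$ and $r_{2}$.

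Your route is more elementary and self-contained --- finite-additivity bookkeeping on the one-generator extension $\mathcal{A}$, a convex interpolation between the two extremal extensions, and a Hahn--Banach/\L{}o\'s--Marczewski--type extension to $L_{x}(\mathfrak{C})$ --- and avoids measure algebras entirely. The paper's route is less direct but illustrates its central thesis that Keisler-measure facts can be read off from the corresponding Boolean-type facts once the correspondence is set up. One presentational point: you correctly flag the well-definedness of the extremal $\lambda'_{i}$ on normal forms $(\alpha\wedge\phi)\vee(\beta\wedge\neg\phi)$ as the crux, but you never actually write down the formula for $\lambda'_{i}$; since that computation carries all the content of your argument, it would be worth making it explicit rather than leaving it as ``routine''.
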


\begin{rem}
\label{rem:proof-lemma-7.4}For any measure algebra $\mathcal{B}$
and $F\subseteq\mathcal{B}$ there exists a countable $A\subseteq F$
such $A$ and $F$ have the same set of lower bounds, hence $\prod F$
and $\prod A$ exist and are equal (since $\mathcal{B}$ has c.c.c.
and is complete; see for example \cite[Proposition 316E]{MR2459668});
and if $F$ is closed under finite products we can choose $A$ to
be a chain. Thus we conclude $\nu_{\kappa}\left(\prod F\right)=\inf\left\{ \nu_{\kappa}\left(a\right)\right\} _{a\in F}$.
A similar argument shows $\nu_{\kappa}\left(\sum F\right)=\sup\left\{ \nu_{\kappa}\left(a\right)\right\} _{a\in F}$
if $F$ is closed under finite sums.

Let $p\in S_{\mathcal{U}_{\kappa}}^{x}\left(M\right)$, $\phi\left(x,y\right)$
a partitioned formula and $b\in\mathcal{\mathfrak{C}}$ a $y$-tuple.
Then 
\[
\left\{ p\left(\varphi\left(x,a\right)\right)\mid\varphi\left(x,a\right)\in L_{x}\left(M\right),\phi\left(x,b\right)\rightarrow\varphi\left(x,a\right)\right\} 
\]

and 
\[
\left\{ p\left(\varphi_{2}\left(x,a_{2}\right)\right)-p\left(\varphi_{1}\left(x,a_{1}\right)\right)\mid\varphi_{i}\left(x,a_{i}\right)\in L_{x}\left(M\right),\varphi_{1}\left(x,a_{1}\right)\rightarrow\phi\left(x,b\right)\rightarrow\varphi_{2}\left(x,a_{2}\right)\right\} 
\]

are closed under finite products, while
\[
\left\{ p\left(\varphi\left(x,a\right)\right)\mid\varphi\left(x,a\right)\in L_{x}\left(M\right),\varphi\left(x,a\right)\rightarrow\phi\left(x,b\right)\right\} 
\]

is closed under finite sums.
\end{rem}

\begin{rem}
We can also get \cite[Lemma 7.4]{pierrebook} by combining Proposition
\ref{prop:attaching a general type to a measure}, Remark \ref{rem:proof-lemma-7.4}
and Fact \ref{fact:homomorphism-extension}.

Indeed we need only find for any $r_{1}\leq r\leq r_{2}$ some 
\[
\mathfrak{b}\leq\left(\stackrel[\psi\left(x,b\right):\varphi\left(x,a\right)\rightarrow\psi\left(x,b\right)]{}{\prod}p\left(\psi\left(x,b\right)\right)\right)-\left(\stackrel[\psi\left(x,b\right):\psi\left(x,b\right)\rightarrow\varphi\left(x,a\right)]{}{\sum}p\left(\psi\left(x,b\right)\right)\right)
\]
 (for $p$ some type corresponding to $\lambda$) such that $\nu_{\kappa}\left(\mathfrak{b}\right)=r-r_{1}$;
and it is easy to see that when $\kappa\geq\aleph_{0}$, for every
$\mathfrak{a}\in\mathcal{\mathcal{U}_{\kappa}^{+}}$ and every $0\leq s\leq\nu_{\kappa}\left(\mathfrak{a}\right)$
exists $\mathfrak{b}\leq\mathfrak{a}$ such $\nu_{\kappa}\left(\mathfrak{b}\right)=s$
(indeed the same holds for any non-atomic measure alegbra, but here
we can see it directly).
\end{rem}

\section{\label{sec:Analysis of stable case}Analysis of the stable case}

In this section, we analyze the stable and totally transcendental
(t.t.) cases. We show that in the stable case, local Boolean types
are essentially averages of classical $\varphi$-types, and that in
the t.t. case the same is true for complete Boolean types. We start
with the t.t. case, since the general stable case is similar (and
easier, since local ranks are bounded by $\omega$). 

\subsection{The t.t. case}
\begin{defn}
\label{def:support-1}For $q\in S_{\cal B}^{x}\left(A\right)$, let
$\supp\left(q\right)=\set{p\left(x\right)\in S^{x}\left(A\right)}{p\vdash\theta\Rightarrow q\left(\theta\right)>0}$.
\end{defn}

Note that when $q\in S_{\cal B}^{x}\left(A\right)$, $\supp\left(q\right)$
is a closed subset of $S^{x}\left(A\right)$. Also note that if $\Gamma$
is a collection of formulas, closed under finite conjunctions, such
that $q\left(\theta\right)>0$ for all $\theta\in\Gamma$ then there
is a type $p\in\supp\left(q\right)$ such that $p\vdash\Gamma$ (because
$\Gamma\cup\set{\neg\psi}{q\left(\psi\right)=0}$ is consistent). 

 We will use some basic facts from stability theory, namely:
\begin{fact}
\cite[Section 3]{MR1429864} \label{fact:CB analysis}For a topological
space $X$, let $X'=X\backslash\set{x\in X}{x\text{ is isolated}}$.
Assume that $T$ is t.t. and let $X=S^{x}\left(A\right)$ for some
$A\subseteq M\models T$. For $\alpha\in\ord$, let $X^{\left(\alpha\right)}$
be the Cantor-Bendixon analysis of $X$ ($X^{\left(\alpha+1\right)}=\left(X^{\left(\alpha\right)}\right)'$
and $X^{\left(\alpha\right)}=\bigcap_{\beta<\alpha}X^{\left(\beta\right)}$
when $\alpha$ is a limit ordinal). Then for some (successor) $\alpha<\left|T\right|^{+}$,
$X^{\left(\alpha\right)}=\emptyset$. (Note that this definition is
a bit different than the one in \cite[Section 3]{MR1429864}, where
one considers global types.)  
\end{fact}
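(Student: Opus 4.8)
The plan is to reproduce the global argument of \cite[Section 3]{MR1429864} locally over $A$ by comparing the Cantor--Bendixson analysis of $X=S^{x}\left(A\right)$ with global Morley rank. Write $\operatorname{RM}\left(\psi\right)$ for the Morley rank of a formula $\psi$, computed in $\C$; since $T$ is t.t.\ this is an ordinal for every consistent $\psi$, and the cited reference supplies the essential bound $\operatorname{RM}\left(x=x\right)<\left|T\right|^{+}$ (this is the global statement, and is where total transcendence really enters). For a formula $\varphi$ over $A$ let $\left[\varphi\right]=\set{p\in X}{\varphi\in p}$ be the corresponding clopen set.

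The heart of the matter is the inequality: for every ordinal $\alpha$ and every $\varphi$ over $A$,
\[
\left[\varphi\right]\cap X^{\left(\alpha\right)}\neq\emptyset\ \Longrightarrow\ \operatorname{RM}\left(\varphi\right)\geq\alpha.
\]
I would prove this by induction on $\alpha$. The case $\alpha=0$ is just consistency, and for limit $\alpha$ it is immediate from the definition of $X^{\left(\alpha\right)}$ as an intersection together with the induction hypothesis. The successor case $\alpha=\beta+1$ is the real content: a point $p\in\left[\varphi\right]\cap X^{\left(\beta+1\right)}$ is a non-isolated point of $X^{\left(\beta\right)}$, so inside the clopen neighbourhood $\left[\varphi\right]$ one can peel off, by the usual Hausdorff separation in a Stone space, infinitely many pairwise disjoint clopen sets $\left[\chi_{i}\right]\subseteq\left[\varphi\right]$ each meeting $X^{\left(\beta\right)}$. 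By the induction hypothesis $\operatorname{RM}\left(\chi_{i}\right)\geq\beta$, and since the $\chi_{i}$ are pairwise inconsistent and imply $\varphi$, the definition of Morley rank gives $\operatorname{RM}\left(\varphi\right)\geq\beta+1$.

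Applying this with $\varphi$ equal to $x=x$ (so that $\left[\varphi\right]=X$) yields $X^{\left(\alpha\right)}\neq\emptyset\Rightarrow\alpha\leq\operatorname{RM}\left(x=x\right)$. Hence $X^{\left(\alpha\right)}=\emptyset$ for every $\alpha>\operatorname{RM}\left(x=x\right)$, and since $\left|T\right|^{+}$ is a cardinal and $\operatorname{RM}\left(x=x\right)<\left|T\right|^{+}$, the least such $\alpha$ is $<\left|T\right|^{+}$. Finally, this least $\alpha$ must be a successor: if it were a limit $\delta$, then $\set{X^{\left(\beta\right)}}{\beta<\delta}$ would be a decreasing chain of nonempty closed subsets of the compact space $X$, so by compactness their intersection $X^{\left(\delta\right)}$ would be nonempty, a contradiction. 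The main obstacle is the global bound $\operatorname{RM}\left(x=x\right)<\left|T\right|^{+}$ itself; the comparison above is exactly what transfers it to the analysis over $A$ and, crucially, makes the resulting bound independent of $\left|A\right|$.
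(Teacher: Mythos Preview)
The paper does not supply its own proof of this statement: it is recorded as a \emph{Fact} with a citation to \cite[Section 3]{MR1429864}, so there is no in-paper argument to compare against. Your proof is the standard one---bounding the Cantor--Bendixson rank of $S^{x}(A)$ by the global Morley rank via the inductive inequality $\left[\varphi\right]\cap X^{(\alpha)}\neq\emptyset\Rightarrow\operatorname{RM}(\varphi)\geq\alpha$---and it is correct. The successor step is handled properly (peeling off infinitely many disjoint clopen sets below a non-isolated point), the appeal to compactness to rule out a limit terminal stage is fine, and you are right to isolate the bound $\operatorname{RM}(x=x)<\left|T\right|^{+}$ as the place where the cited reference does the real work and where the conclusion becomes uniform in $\left|A\right|$.
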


\begin{rem}
\label{rem:derivative subset}Note that for any topological space
$X$, if $Y\subseteq X$ then $Y'\subseteq X'$.
\end{rem}

\begin{lem}
\label{lem:greater than support}Suppose that $T$ is t.t.. Suppose
that $\cal B$ is any complete Boolean algebra. Let $A\subseteq M\models T$
and let $q\in S_{\cal B}^{x}\left(A\right)$. Let $X=\supp\left(q\right)$.
Let $U$ be the set of all isolated types $r\in X$. For each $r\in U$,
let $\theta_{r}\left(x\right)$ be an isolating formula for $r$.
Then:
\begin{enumerate}
\item $\set{q\left(\theta_{r}\right)}{r\in U}$ is an antichain.
\item For any $\psi\left(x\right)\in L_{x}\left(A\right)$, $q\left(\psi\right)\geq\sum_{r\in U}q\left(\theta_{r}\right)\cdot r\left(\psi\right)$
(where we treat $r$ as a $2$-type).
\end{enumerate}
\end{lem}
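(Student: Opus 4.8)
First I would establish item (1), that $\set{q\left(\theta_{r}\right)}{r\in U}$ is an antichain, where $U$ is the set of isolated types in $X = \supp(q)$. For distinct $r_1, r_2 \in U$ with isolating formulas $\theta_{r_1}, \theta_{r_2}$, I would observe that since $r_1 \neq r_2$ are distinct isolated types, their isolating formulas can be taken to be inconsistent with one another: indeed $\theta_{r_i}$ isolates $r_i$ among all types in $X$, and one can shrink $\theta_{r_1}$ to $\theta_{r_1} \wedge \neg\theta_{r_2}$ without losing the isolation property (this still isolates $r_1$ since $r_1 \not\vdash \theta_{r_2}$, so $\neg\theta_{r_2} \in r_1$). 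Hence $\theta_{r_1} \wedge \theta_{r_2}$ is inconsistent in $\C$, so it is the zero element of $L_x(A)$, and applying the homomorphism $q$ gives $q(\theta_{r_1}) \cdot q(\theta_{r_2}) = q(\theta_{r_1} \wedge \theta_{r_2}) = q(\bot) = 0$. I would also need each $q(\theta_r) \neq 0$: this is immediate from the definition of $\supp$, since $r \vdash \theta_r$ and $r \in \supp(q)$ forces $q(\theta_r) > 0$. Thus the family lies in $\Bb^+$ and is pairwise disjoint, i.e.\ an antichain.

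For item (2), fix $\psi(x) \in L_x(A)$. The cleanest route is to split $U$ into those $r$ with $r \vdash \psi$ (where $r(\psi) = 1$) and those with $r \vdash \neg\psi$ (where $r(\psi) = 0$), so the claimed sum reduces to $\sum_{r \in U,\, r \vdash \psi} q(\theta_r)$. For each such $r$, since $\theta_r$ isolates $r$ in $X$ and $r \vdash \psi$, I would argue that (after possibly shrinking $\theta_r$) $\C \vDash \theta_r \rightarrow \psi$, so $\theta_r \leq \psi$ in $L_x(A)$ and hence $q(\theta_r) \leq q(\psi)$ by monotonicity of the homomorphism. Combined with the antichain property from (1), the elements $q(\theta_r)$ for $r \vdash \psi$ are pairwise disjoint and each below $q(\psi)$, so their supremum (which exists since $\Bb$ is complete) is also below $q(\psi)$:
\[
q(\psi) \geq \sum_{r \in U,\, r \vdash \psi} q(\theta_r) = \sum_{r \in U} q(\theta_r) \cdot r(\psi),
\]
which is exactly the desired inequality.

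The main obstacle I anticipate is the subtlety in item (2) around whether $\theta_r \leq \psi$ genuinely holds after shrinking. An isolating formula $\theta_r$ for $r$ within the closed set $X$ need not itself imply $\psi$ as a formula in $\C$ — it only isolates $r$ relative to $X = \supp(q)$, and there may be types outside $X$ satisfying $\theta_r \wedge \neg\psi$. The resolution is that such types lie outside $\supp(q)$ and hence the formula $\theta_r \wedge \neg\psi$, while possibly consistent in $\C$, has $q$-value zero: any type $p$ with $p \vdash \theta_r \wedge \neg\psi$ is not in $\supp(q)$, but I must be careful, as $q(\theta_r \wedge \neg\psi) = 0$ does not follow merely from no \emph{single} type forcing it being in the support. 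The correct argument is that $\theta_r \wedge \neg\psi$ is inconsistent with the isolation: since $\theta_r$ isolates $r$ in $X$ and $r \vdash \psi$, every type in $X$ satisfying $\theta_r$ equals $r$ and hence satisfies $\psi$, so $X \cap [\theta_r \wedge \neg\psi] = \emptyset$; by the remark following Definition \ref{def:support-1}, if $q(\theta_r \wedge \neg\psi) > 0$ then some type in $\supp(q) = X$ would force $\theta_r \wedge \neg\psi$, a contradiction. Therefore $q(\theta_r \wedge \neg\psi) = 0$, which yields $q(\theta_r) = q(\theta_r \wedge \psi) \leq q(\psi)$, exactly what is needed. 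Getting this support-versus-consistency bookkeeping precise is where the real care lies.
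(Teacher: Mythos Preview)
Your treatment of item (2), including the ``obstacle'' paragraph, is correct and is exactly the paper's argument: if $\psi\in r$ and $q(\theta_r\wedge\neg\psi)>0$, then by the remark after Definition~\ref{def:support-1} some $p\in\supp(q)=X$ satisfies $\theta_r\wedge\neg\psi$; isolation forces $p=r$, contradicting $\psi\in r$. Hence $q(\theta_r\wedge\neg\psi)=0$ and $q(\theta_r)\leq q(\psi)$.

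Your argument for item (1), however, has a genuine gap. You claim that after shrinking, ``$\theta_{r_1}\wedge\theta_{r_2}$ is inconsistent in $\C$'', but this is false in general: the formulas $\theta_{r_i}$ isolate $r_i$ only \emph{within the closed set} $X=\supp(q)$, not within the full space $S^x(A)$, so there may well be types outside $X$ satisfying $\theta_{r_1}\wedge\theta_{r_2}$. Replacing $\theta_{r_1}$ by $\theta_{r_1}\wedge\neg\theta_{r_2}$ produces a \emph{new} formula that is inconsistent with $\theta_{r_2}$, but this says nothing about $q(\theta_{r_1})\cdot q(\theta_{r_2})$ for the \emph{given} isolating formulas, which is what the lemma asserts (and what Theorem~\ref{thm:w-stable avg} uses). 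Moreover, the statement is for an arbitrary choice of isolating formulas, so you are not free to alter them.

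The fix is precisely the support argument you already deployed for (2): if $q(\theta_{r_1})\cdot q(\theta_{r_2})=q(\theta_{r_1}\wedge\theta_{r_2})>0$, then some $p\in X$ contains $\theta_{r_1}\wedge\theta_{r_2}$, whence $p=r_1$ and $p=r_2$ by isolation, contradicting $r_1\neq r_2$. This is the paper's proof of (1).
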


\begin{proof}
(1) Note that $0<q\left(\theta_{r}\right)$ for all $r\in U$ since
$r\in\supp\left(q\right)$. Suppose that $r_{1}\neq r_{2}\in U$ and
$0<\mathfrak{b}\leq q\left(\theta_{r_{1}}\right),q\left(\theta_{r_{2}}\right)$.
Hence $0<\mathfrak{b}\leq q\left(\theta_{r_{1}}\land\theta_{r_{2}}\right)$
and thus for some $r\in\supp\left(q\right)$, $r\vdash\theta_{r_{1}}\land\theta_{r_{2}}$
hence $r_{1}=r=r_{2}$ by assumption.

(2) It is enough to show that if $\psi\left(x\right)\in r$ for some
$r\in U$ then $q\left(\psi\right)\geq q\left(\theta_{r}\right)$.
If $\psi\left(x\right)\in r$, it follows that $q\left(\theta_{r}\land\neg\psi\right)=0$
so $q\left(\theta_{r}\right)=q\left(\psi\wedge\theta_{r}\right)\leq q\left(\psi\right)$.
\end{proof}
\begin{thm}
\label{thm:w-stable avg}Suppose that $T$ is totally transcendental,
$A$ is some set, $\cal B$ is a complete Boolean algebra and that
$q\in S_{\cal B}^{x}\left(A\right)$ is a $\cal B$-type. Then there
is a maximal antichain $\sequence{\mathfrak{b}_{r}}{r\in U}$ where
$U\subseteq\supp\left(q\right)$ such that for all $\psi\left(x\right)\in L_{x}\left(A\right)$,
$q\left(\psi\right)=\sum_{r\in U}\mathfrak{b}_{r}\cdot r\left(\psi\right)$.
\end{thm}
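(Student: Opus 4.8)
The plan is to construct the antichain by a transfinite recursion that peels off the isolated points of $\supp\left(q\right)$ one Cantor--Bendixson level at a time. Start with $\mathfrak{c}_{0}=1$. At stage $\alpha$, consider the relative type $q_{\alpha}=\pi_{\mathfrak{c}_{\alpha}}\circ q$, a $\mathcal{B}|_{\mathfrak{c}_{\alpha}}$-type (recall $\mathcal{B}|_{\mathfrak{c}_{\alpha}}$ is complete since $\mathcal{B}$ is), so that $q_{\alpha}\left(\psi\right)=q\left(\psi\right)\cdot\mathfrak{c}_{\alpha}$. Let $X_{\alpha}=\supp\left(q_{\alpha}\right)$, let $U_{\alpha}$ be the set of its isolated points with isolating formulas $\theta_{r}$, and set $\mathfrak{b}_{r}=q_{\alpha}\left(\theta_{r}\right)=q\left(\theta_{r}\right)\cdot\mathfrak{c}_{\alpha}$ for $r\in U_{\alpha}$. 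Put $\mathfrak{c}_{\alpha+1}=\mathfrak{c}_{\alpha}-\sum_{r\in U_{\alpha}}\mathfrak{b}_{r}$ and $\mathfrak{c}_{\delta}=\prod_{\alpha<\delta}\mathfrak{c}_{\alpha}$ at limits (all these sups and infs exist by completeness). The final output is $U=\bigcup_{\alpha}U_{\alpha}$ with weights $\sequence{\mathfrak{b}_{r}}{r\in U}$; note the $U_{\alpha}$ are pairwise disjoint because the supports $X_{\alpha}$ will be seen to decrease.

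The crux is termination, and the key point is that each stage removes exactly the isolated points of the current support. For $r\in U_{\alpha}$ we compute $q_{\alpha+1}\left(\theta_{r}\right)=q\left(\theta_{r}\right)\cdot\mathfrak{c}_{\alpha+1}=\mathfrak{b}_{r}\cdot\bigl(-\sum_{s\in U_{\alpha}}\mathfrak{b}_{s}\bigr)=0$, since $\mathfrak{b}_{r}$ is one of the summands; hence $U_{\alpha}\cap X_{\alpha+1}=\emptyset$. As $\mathfrak{c}_{\alpha+1}\leq\mathfrak{c}_{\alpha}$ forces $X_{\alpha+1}\subseteq X_{\alpha}$, we get $X_{\alpha+1}\subseteq X_{\alpha}\setminus U_{\alpha}=\left(X_{\alpha}\right)'$, the derived set. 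By Remark \ref{rem:derivative subset} and transfinite induction (limits handled via $X_{\delta}\subseteq\bigcap_{\alpha<\delta}X_{\alpha}$) this yields $X_{\alpha}\subseteq X^{\left(\alpha\right)}$ for all $\alpha$, so by Fact \ref{fact:CB analysis} some $X_{\alpha^{*}}=\emptyset$. A compactness argument on $S^{x}\left(A\right)$—covering it by clopen sets $\left[\theta\right]$ with $q_{\alpha^{*}}\left(\theta\right)=0$ and extracting a finite subcover whose disjunction is $\top$—then shows $X_{\alpha^{*}}=\emptyset$ forces $\mathfrak{c}_{\alpha^{*}}=q_{\alpha^{*}}\left(\top\right)=0$, so the recursion halts below $\left|T\right|^{+}$.

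It remains to check the three conclusions. For \emph{maximality} I would prove by induction the invariant $\mathfrak{c}_{\alpha}=1-\sum_{r\in\bigcup_{\beta<\alpha}U_{\beta}}\mathfrak{b}_{r}$ (the successor step uses $\mathfrak{b}_{r}\leq\mathfrak{c}_{\alpha}$ and finite de Morgan, the limit step uses that the partial sums increase together with $\prod_{\alpha}-a_{\alpha}=-\sum_{\alpha}a_{\alpha}$); since $\mathfrak{c}_{\alpha^{*}}=0$ this gives $\sum_{r\in U}\mathfrak{b}_{r}=1$. For the \emph{antichain} property, two elements of a common $U_{\alpha}$ are disjoint by Lemma \ref{lem:greater than support}(1) applied to $q_{\alpha}$; for $r\in U_{\alpha}$ and $r'\in U_{\alpha'}$ with $\alpha<\alpha'$ one has $\mathfrak{b}_{r'}\leq\mathfrak{c}_{\alpha'}\leq\mathfrak{c}_{\alpha+1}\leq-\sum_{s\in U_{\alpha}}\mathfrak{b}_{s}\leq-\mathfrak{b}_{r}$, so $\mathfrak{b}_{r}\cdot\mathfrak{b}_{r'}=0$. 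Finally, for the \emph{averaging identity}, since $\sequence{\mathfrak{b}_{r}}{r\in U}$ is a maximal antichain, infinite distributivity (\cite[Lemma 1.33b]{MR991565}) gives $q\left(\psi\right)=\sum_{r\in U}q\left(\psi\right)\cdot\mathfrak{b}_{r}$, so it suffices to show $q\left(\psi\right)\cdot\mathfrak{b}_{r}=r\left(\psi\right)\cdot\mathfrak{b}_{r}$ for each $r\in U_{\alpha}$. If $\psi\in r$ this reduces to $\mathfrak{b}_{r}\leq q\left(\psi\right)$, which is precisely the inequality $q_{\alpha}\left(\theta_{r}\right)\leq q_{\alpha}\left(\psi\right)$ obtained inside the proof of Lemma \ref{lem:greater than support}(2) (applied to $q_{\alpha}$, then using $q_{\alpha}\left(\psi\right)\leq q\left(\psi\right)$); if $\psi\notin r$ then $\neg\psi\in r$ gives $\mathfrak{b}_{r}\leq-q\left(\psi\right)$, so both sides are $0$.

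The main obstacle I anticipate is the transfinite bookkeeping: correctly maintaining the residual-mass invariant through limit stages, and confirming that passing to the relative algebra $\mathcal{B}|_{\mathfrak{c}_{\alpha}}$ genuinely contracts the support down to its Cantor--Bendixson derivative. It is exactly this contraction that ties the recursion to Fact \ref{fact:CB analysis} and guarantees that the process terminates, rather than running forever while leaving residual mass unaccounted for.
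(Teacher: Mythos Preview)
Your proposal is correct and follows essentially the same approach as the paper: the same transfinite recursion passing to the relative algebra $\mathcal{B}|_{\mathfrak{c}_{\alpha}}$, taking the isolated points of $\supp(q_{\alpha})$ via Lemma~\ref{lem:greater than support}, and terminating via the Cantor--Bendixson bound of Fact~\ref{fact:CB analysis}. Your final equality argument (infinite distributivity plus the termwise identity $q(\psi)\cdot\mathfrak{b}_{r}=r(\psi)\cdot\mathfrak{b}_{r}$) is a slightly more explicit rendering of the paper's ``apply the inequality to $\neg\psi$'' step, but the content is the same.
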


\begin{proof}
 For any $0<\mathfrak{b}\in\cal B$, let $\mathcal{B}|_{\mathfrak{b}}$
be the relative algebra. Letting $X=S^{x}\left(A\right)$, we try
to construct a sequence $\sequence{\mathfrak{b}_{\alpha},q_{\alpha},U_{\alpha},\bar{\mathfrak{c}}_{\alpha}}{\alpha<\alpha^{*}}$
for some $\alpha^{*}\leq\left|T\right|^{+}$ such that:
\begin{itemize}
\item $0<\mathfrak{b}_{\alpha}\leq\mathfrak{b}_{\beta}$ for $\beta<\alpha$;
$\mathfrak{b}_{0}=1$ and more generally $\mathfrak{b}_{\alpha}=-\sum_{\beta<\alpha,r\in U_{\beta}}\mathfrak{c}_{\beta,r}$;
$q_{\alpha}\in S_{\mathcal{B_{\alpha}}}^{x}\left(A\right)$ where
$\mathcal{B}_{\alpha}=\mathcal{B}|_{\mathfrak{b}_{\alpha}}$; $q_{\alpha}\left(\psi\right)=q\left(\psi\right)\cdot\mathfrak{b}_{\alpha}$
for any $\psi\in L_{x}\left(A\right)$; $U_{\alpha}\subseteq\supp\left(q_{\alpha}\right)\subseteq X^{\left(\alpha\right)}$;
$\bar{\mathfrak{c}}_{\alpha}=\sequence{\mathfrak{c}_{\alpha,r}}{r\in U_{\alpha}}$
is an antichain contained in $\mathcal{B}_{\alpha}$;  $q_{\alpha}\left(\psi\right)\geq\sum_{r\in U_{\alpha}}\mathfrak{c}_{\alpha,r}\cdot r\left(\psi\right)$
for any $\psi\in L_{x}\left(A\right)$. 
\end{itemize}
Given $\sequence{\mathfrak{b}_{\beta},q_{\beta},U_{\beta},\bar{\mathfrak{c}}_{\beta}}{\beta<\alpha}$,
if $\sum_{\beta<\alpha,r\in U_{\beta}}\mathfrak{c}_{\beta,r}=1$ we
stop and let $\alpha^{*}=\alpha$. Otherwise, let $\mathfrak{b}_{\alpha}$,
$q_{\alpha}$ as above and let $U_{\alpha}\subseteq\supp\left(q_{\alpha}\right)$
the set of isolated types in $\supp\left(q_{\alpha}\right)$ (so for
$\alpha=0$, $q_{0}=q$ and $\mathfrak{b}_{0}=1$). For $r\in U_{\alpha}$,
let $\mathfrak{c}_{\alpha,r}=q_{\alpha}\left(\theta_{r}\right)$ where
$\theta_{r}$ isolates $r$. By Lemma \ref{lem:greater than support}
(1), $\set{\mathfrak{c}_{\alpha,r}}{r\in U_{\alpha}}$ is an antichain
in $\cal B_{\alpha}$. Note that $q_{\alpha}\left(\psi\right)\geq\sum_{r\in U_{\alpha}}\mathfrak{c}_{\alpha,r}\cdot r\left(\psi\right)$
by Lemma \ref{lem:greater than support} (2). Now prove by induction
on $\alpha$ that $\supp\left(q_{\alpha}\right)\subseteq X^{\left(\alpha\right)}$
(this follows from Remark \ref{rem:derivative subset} and the fact
that $\supp\left(q_{\alpha+1}\right)\subseteq\supp\left(q_{\alpha}\right)'$)
and that $\set{\mathfrak{c}_{\beta,r}}{r\in U_{\beta},\beta<\alpha}$
is an antichain in $\cal B$.

Finally, since for some $\beta<\left|T\right|^{+}$, $X^{\left(\beta\right)}=\emptyset$,
it follows that $\alpha^{*}\leq\beta$ (otherwise, $\supp\left(q_{\beta}\right)=\emptyset$
and so $\mathfrak{b}_{\beta}=q_{\beta}\left(x=x\right)=0$, contradiction).
Hence for all $\psi\in L_{x}\left(A\right)$, $q\left(\psi\right)\geq\sum_{\alpha<\alpha^{*}}q_{\alpha}\left(\psi\right)\geq\sum_{r\in U_{\alpha},\alpha<\alpha^{*}}\mathfrak{c}_{\alpha,r}\cdot r\left(\psi\right)$
and $\set{\mathfrak{c}_{\alpha,r}}{\alpha<\alpha^{*},r\in U_{\alpha}}$
is a maximal antichain in $\cal B$. Since this is also true for $\neg\psi$,
we have equality and we are done. 
\end{proof}

\subsection{The stable case}

Fix a partitioned formula $\varphi\left(x,y\right)$ in some theory
$T$, and let $A\subseteq\C$. As in \cite[Section 2]{MR1429864},
by a\emph{ $\varphi$-formula over $A$}, we will mean a formula $\psi\left(x\right)\in L_{x}\left(A\right)$
which is equivalent to a Boolean combination of instances of $\varphi$
over $A$ (over a model $M$, a $\varphi$-formula is just a Boolean
combination of instances of $\varphi$ over $M$). Let $L_{\varphi,x}\left(A\right)$
be the Boolean algebra of $\varphi$-formulas over $A$ up to equivalence
in $\C$. Let  $S_{\varphi}^{x}\left(A\right)$ be the set of all
complete $\varphi$-types over $A$ in $x$, i.e., maximal consistent
sets of $\varphi$-formulas over $A$.  
\begin{defn}
\label{def:support}(local Boolean type) Suppose that $\cal B$ is
a Boolean algebra and $\varphi\left(x,y\right)$ is a partitioned
formula. A \emph{$\cal B,\varphi$-type} over a set $A$ is a homomorphism
from $L_{\varphi,x}\left(A\right)$ to $\mathcal{B}$. Denote the
set of $\cal B,\varphi$-types by $S_{\cal B,\varphi}^{x}\left(A\right)$.
For $q\in S_{\cal B,\varphi}^{x}$ let $\supp_{\varphi}\left(q\right)=\set{p\left(x\right)\in S_{\varphi}^{x}\left(A\right)}{p\vdash\theta\Rightarrow q\left(\theta\right)>0}$.

Similarly to the previous section, we have:
\end{defn}

\begin{fact}
\label{fact:stable CB rank}\cite[Section 3]{MR1429864} Assume that
$\varphi\left(x,y\right)$ is stable in some complete theory $T$,
$A\subseteq M\models T$ and let $X=S_{\varphi}^{x}\left(A\right)$.
Then for some $n<\omega$, $X^{\left(n+1\right)}=\emptyset$. 
\end{fact}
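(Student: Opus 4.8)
The plan is to identify the Cantor--Bendixson rank of $X=S_{\varphi}^{x}\left(A\right)$ with Shelah's local binary $\varphi$-rank and to use the classical fact that the latter is finite exactly when $\varphi$ is stable (see \cite[Section 3]{MR1429864}). Recall that $X$ is the Stone space of the Boolean algebra $L_{\varphi,x}\left(A\right)$: it is compact, Hausdorff and zero-dimensional, and the sets $\left[\varphi\left(x,a\right)\right]$ together with their complements (as $a$ ranges over $A$) form a subbasis of clopen sets, since instances of $\varphi$ generate $L_{\varphi,x}\left(A\right)$ as a Boolean algebra. Two distinct complete $\varphi$-types must therefore disagree on some single instance $\varphi\left(x,b\right)$ --- if they agreed on all instances they would agree on every Boolean combination and hence coincide. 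For a consistent $\varphi$-formula $\chi$ write $\left[\chi\right]$ for the corresponding clopen set, and recall the binary $\varphi$-rank defined by $R\left(\chi,\varphi,2\right)\geq m+1$ iff there is a parameter $b$ with $R\left(\chi\wedge\varphi\left(x,b\right),\varphi,2\right)\geq m$ and $R\left(\chi\wedge\neg\varphi\left(x,b\right),\varphi,2\right)\geq m$; stability of $\varphi$ is equivalent to $R\left(x=x,\varphi,2\right)=N$ for some $N<\omega$.

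The heart of the argument is the following splitting lemma, proved by unwinding the definition of the Cantor--Bendixson derivative: for every consistent $\varphi$-formula $\chi$ and every ordinal $\alpha$, if $\left[\chi\right]\cap X^{\left(\alpha+1\right)}\neq\emptyset$ then there is a parameter $b$ with $\left[\chi\wedge\varphi\left(x,b\right)\right]\cap X^{\left(\alpha\right)}\neq\emptyset$ and $\left[\chi\wedge\neg\varphi\left(x,b\right)\right]\cap X^{\left(\alpha\right)}\neq\emptyset$. Indeed, a point $p\in\left[\chi\right]\cap X^{\left(\alpha+1\right)}$ is by definition a non-isolated point of $X^{\left(\alpha\right)}$, so the open neighbourhood $\left[\chi\right]$ of $p$ contains a second point $q\neq p$ of $X^{\left(\alpha\right)}$; now $p$ and $q$, being distinct $\varphi$-types, are separated by a single instance $\varphi\left(x,b\right)$, and this instance witnesses the lemma.

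Iterating the lemma from the top gives the bound. Suppose $X^{\left(n\right)}\neq\emptyset$; starting from $\chi_{\emptyset}=\left(x=x\right)$, for which $\left[\chi_{\emptyset}\right]\cap X^{\left(n\right)}=X^{\left(n\right)}\neq\emptyset$, I build a binary tree $\sequence{\chi_{s}}{s\in 2^{\leq n}}$ by setting, at a node $s$ of length $<n$ with $\left[\chi_{s}\right]\cap X^{\left(n-\left|s\right|\right)}\neq\emptyset$, the children $\chi_{s\concat 0}=\chi_{s}\wedge\neg\varphi\left(x,b_{s}\right)$ and $\chi_{s\concat 1}=\chi_{s}\wedge\varphi\left(x,b_{s}\right)$ for the $b_{s}$ provided by the lemma (with $\alpha=n-\left|s\right|-1$); both children then have clopen sets meeting $X^{\left(n-\left|s\right|-1\right)}$. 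Each $\chi_{s}$ is consistent, and the tree exhibits $R\left(x=x,\varphi,2\right)\geq n$. As this works for every $n$ with $X^{\left(n\right)}\neq\emptyset$, stability forces $X^{\left(N+1\right)}=\emptyset$: otherwise we would obtain $R\left(x=x,\varphi,2\right)\geq N+1$, contradicting $R\left(x=x,\varphi,2\right)=N$. This proves the claim with $n=N$. The one delicate point is the passage, inside the lemma, from separating two types to separating them by a single instance of $\varphi$ rather than by an arbitrary $\varphi$-formula --- it is exactly this reduction that lets the Cantor--Bendixson derivative feed the single-instance binary rank, and hence connect to stability; note also that the parameters $b_{s}$ are allowed to depend on the node $s$, which is precisely what the definition of $R\left(\cdot,\varphi,2\right)$ permits.
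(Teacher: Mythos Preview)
The paper does not prove this statement; it is recorded as a \emph{Fact} with a citation to \cite[Section 3]{MR1429864} and no argument is given. Your proof is correct and is essentially the standard one lying behind that citation: bounding the Cantor--Bendixson rank of $S_{\varphi}^{x}\left(A\right)$ by the local $2$-rank $R\left(x=x,\varphi,2\right)$, which is finite precisely when $\varphi$ is stable. The key step --- that two distinct points of $X^{\left(\alpha\right)}$ in a common clopen neighbourhood can be separated by a \emph{single} instance $\varphi\left(x,b\right)$ rather than an arbitrary $\varphi$-formula --- is exactly what makes the Cantor--Bendixson derivative witness a step of the binary splitting rank, and you have isolated and justified it correctly. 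One cosmetic remark: the parameters $b_{s}$ you produce lie in $A$, whereas $R\left(\cdot,\varphi,2\right)$ is usually computed with parameters from the monster; this only makes your lower bound $R\left(x=x,\varphi,2\right)\geq n$ easier, so there is no gap.
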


\begin{thm}
\label{thm:avg of types}Suppose that $\varphi\left(x,y\right)$ is
stable, $A$ is some set, $\cal B$ is a complete Boolean algebra
and that $q\in S_{\cal B,\varphi}^{x}\left(A\right)$ is a $\cal B,\varphi$-type.
Then there is a maximal antichain $\sequence{\mathfrak{b}_{r}}{r\in U}$
where $U\subseteq\supp_{\varphi}\left(q\right)$ such that for all
$\psi\left(x\right)\in L_{\varphi,x}\left(A\right)$, $q\left(\psi\right)=\sum_{r\in U}\mathfrak{b}_{r}\cdot r\left(\psi\right)$. 
\end{thm}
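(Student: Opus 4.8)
The plan is to run the same construction as in the proof of Theorem \ref{thm:w-stable avg}, transported to the local setting: everywhere I would replace $L_{x}\left(A\right)$ by $L_{\varphi,x}\left(A\right)$, the Stone space $S^{x}\left(A\right)$ by $X=S_{\varphi}^{x}\left(A\right)$, $\supp$ by $\supp_{\varphi}$, and invoke Fact \ref{fact:stable CB rank} in place of Fact \ref{fact:CB analysis}. The first thing to check is that Lemma \ref{lem:greater than support} has a purely local analogue: its proof never uses total transcendence, only the defining property of the support together with the observation that a family of $\varphi$-formulas closed under finite conjunctions on which $q$ is everywhere positive is realized by some type in $\supp_{\varphi}\left(q\right)$ (which holds verbatim for $\varphi$-types). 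Thus for the isolated points $r$ of $\supp_{\varphi}\left(q\right)$ with isolating $\varphi$-formula $\theta_{r}$, the set $\set{q\left(\theta_{r}\right)}{r\in U}$ is an antichain and $q\left(\psi\right)\geq\sum_{r\in U}q\left(\theta_{r}\right)\cdot r\left(\psi\right)$ for every $\varphi$-formula $\psi$.

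Next I would build the sequence $\sequence{\mathfrak{b}_{\alpha},q_{\alpha},U_{\alpha},\bar{\mathfrak{c}}_{\alpha}}{\alpha<\alpha^{*}}$ exactly as before: $\mathfrak{b}_{0}=1$, $q_{\alpha}\left(\psi\right)=q\left(\psi\right)\cdot\mathfrak{b}_{\alpha}$ in $\mathcal{B}|_{\mathfrak{b}_{\alpha}}$, $U_{\alpha}$ the set of isolated points of $\supp_{\varphi}\left(q_{\alpha}\right)$, $\mathfrak{c}_{\alpha,r}=q_{\alpha}\left(\theta_{r}\right)$, and $\mathfrak{b}_{\alpha+1}=\mathfrak{b}_{\alpha}-\sum_{r\in U_{\alpha}}\mathfrak{c}_{\alpha,r}$. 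The key structural input — that passing from $q_{\alpha}$ to $q_{\alpha+1}$ kills every isolated point of $\supp_{\varphi}\left(q_{\alpha}\right)$, so that $\supp_{\varphi}\left(q_{\alpha+1}\right)\subseteq\supp_{\varphi}\left(q_{\alpha}\right)'$ — goes through unchanged: one computes $q_{\alpha+1}\left(\theta_{r}\right)=0$ for $r\in U_{\alpha}$ using disjointness of the $\mathfrak{c}_{\alpha,r}$ together with the infinite distributive law in the complete algebra $\mathcal{B}$. With Remark \ref{rem:derivative subset} this gives, by induction on $\alpha$, that $\supp_{\varphi}\left(q_{\alpha}\right)\subseteq X^{\left(\alpha\right)}$ and that $\set{\mathfrak{c}_{\beta,r}}{\beta<\alpha,\,r\in U_{\beta}}$ is an antichain in $\mathcal{B}$.

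The only genuine difference from the t.t. case is the termination bound, and here it works in our favour: by Fact \ref{fact:stable CB rank} we have $X^{\left(n+1\right)}=\emptyset$ for some finite $n$, so the construction must stop at some $\alpha^{*}\leq n+1<\omega$ (were it to reach $n+1$ without stopping we would get $\supp_{\varphi}\left(q_{n+1}\right)=\emptyset$, forcing $\mathfrak{b}_{n+1}=q_{n+1}\left(\top\right)=0$, i.e.\ termination). In particular the construction involves only finitely many successor steps and no limit stages at all, which removes the one place where the t.t.\ argument had to pass through limits. Finally I would conclude exactly as before: the accumulated $\set{\mathfrak{c}_{\alpha,r}}{\alpha<\alpha^{*},\,r\in U_{\alpha}}$ is a maximal antichain, each term satisfies $\mathfrak{c}_{\alpha,r}\cdot r\left(\psi\right)\leq q_{\alpha}\left(\psi\right)\leq q\left(\psi\right)$, so $q\left(\psi\right)\geq\sum_{\alpha,r}\mathfrak{c}_{\alpha,r}\cdot r\left(\psi\right)$ for every $\varphi$-formula $\psi$; applying this also to $\neg\psi$ and using that the antichain is maximal (so the two lower bounds sum to $1=q\left(\psi\right)+q\left(\neg\psi\right)$) forces equality. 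Re-indexing $U=\bigcup_{\alpha<\alpha^{*}}U_{\alpha}$ and $\mathfrak{b}_{r}=\mathfrak{c}_{\alpha,r}$ yields the statement, with $U\subseteq\supp_{\varphi}\left(q\right)$ since each $\supp_{\varphi}\left(q_{\alpha}\right)\subseteq\supp_{\varphi}\left(q\right)$. The main (and very mild) obstacle is simply verifying that the support machinery of Lemma \ref{lem:greater than support} and the ``killing isolated points'' computation are genuinely local statements independent of total transcendence; once that is granted, the proof is essentially a transcription of Theorem \ref{thm:w-stable avg}.
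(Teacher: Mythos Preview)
Your proposal is correct and follows exactly the approach the paper takes: the paper's own proof simply says to repeat the argument of Theorem~\ref{thm:w-stable avg} with $X=S_{\varphi}^{x}(A)$, local Boolean types, and Fact~\ref{fact:stable CB rank} in place of Fact~\ref{fact:CB analysis}, leaving the details to the reader. You have supplied precisely those details, including the (correct) observations that Lemma~\ref{lem:greater than support} is a purely local statement and that the finite Cantor--Bendixson bound removes the need for limit stages.
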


\begin{proof}
The proof is exactly as the proof of \ref{thm:w-stable avg}, working
with $X=S_{\varphi}^{x}\left(A\right)$ and with local Boolean types,
Replacing Fact \ref{fact:CB analysis} with Fact \ref{fact:stable CB rank}.
We leave the details to the reader. 
\end{proof}
\begin{rem}
When $T$ is stable and $q\in S_{\cal B}^{x}\left(A\right)$, this
essentially means that $q|_{\varphi}$ factors through $2^{\left|U\right|}\hookrightarrow\stackrel[r\in U]{}{\prod}\mathcal{B}|_{\mathfrak{b}_{r}}\cong\mathcal{B}$,
see \cite[Proposition 6.4]{MR991565}. In particular we get again,
more directly, that for $\mathcal{B}$ which is $\kappa$-c.c., $\left|Im\left(q|_{\varphi}\right)\right|\leq2^{<\kappa}$
(see Proposition \ref{prop:image not too big}). When $T$ is t.t.,
we get similarly that $\left|Im\left(q\right)\right|\leq2^{<\kappa}$. 
\end{rem}

\subsection{Non-forking}

Using (the proof of) Theorem \ref{thm:avg of types}, one can recover
the theory of forking in stable theories. 
\begin{defn}
Let $\cal B$ be any Boolean algebra and let $T$ be any theory. Let
$A\subseteq B$ be any sets . Say that a $\cal B$-type or a $\cal B,\varphi$-type
$q$ \emph{forks over $A$} if for some formula $\theta\left(x\right)$
over $B$ which forks over $A$, $q\left(\theta\right)>0$. (For the
definition of forking see e.g., \cite[Definition 7.1.7]{TentZiegler}). 
\end{defn}

\begin{fact}
\label{fact:stationarity}(E.g., \cite[Section 2]{MR1429864}) If
$M\prec N$, $\varphi\left(x,y\right)$ is stable, then any $\varphi$-type
$p\in S_{\varphi}^{x}\left(M\right)$ has a unique non-forking extension
$p|_{N}$ to $S_{\varphi}\left(N\right)$. The same is true assuming
elimination of imaginaries when $M$ is replaced by an algebraically
closed set $A$. 
\end{fact}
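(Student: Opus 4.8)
The plan is to prove existence and uniqueness of the non-forking extension $p|_N$ separately, the whole argument resting on the definability of $\varphi$-types when $\varphi$ is stable. First recall that by Fact \ref{fact:stable CB rank} the space $S_\varphi^x\left(B\right)$ has finite Cantor--Bendixson rank for every $B$, so each complete $\varphi$-type carries a finite local $\varphi$-rank $R_\varphi\left(\cdot\right)$, and in a stable theory a $\varphi$-formula over $N$ forks over $M$ precisely when adjoining it strictly drops this rank. Existence is then immediate: take $p|_N\in S_\varphi^x\left(N\right)$ to be an extension of $p$ of maximal local rank among all extensions. Since passing to an extension never raises the rank and the rank is finite, any such maximal extension has rank $R_\varphi\left(p\right)$, which is exactly the non-forking condition; consistency and completeness are automatic since $p|_N$ is a complete $\varphi$-type over $N$.

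For uniqueness---the heart of the matter---I would use that stability of $\varphi$ makes $p$ definable: there is a $\varphi^*$-formula $d_p\varphi\left(y\right)$ with a canonical parameter in $M^{\eq}$ such that $\varphi\left(x,b\right)\in p\iff\,\vDash d_p\varphi\left(b\right)$ for all $b\in M$. The distinguished candidate is the definable extension, namely $\varphi\left(x,b\right)\in p|_N\iff\,\vDash d_p\varphi\left(b\right)$ for $b\in N$; because $M\prec N$ this is a consistent complete $\varphi$-type and it does not raise the rank, so it is non-forking. Now let $q\in S_\varphi^x\left(N\right)$ be any non-forking extension of $p$. Then $q$ is again definable, and the canonical parameter of its definition lies in $\acl^{\eq}\left(M\right)$. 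The key identity is $\acl^{\eq}\left(M\right)=\operatorname{dcl}^{\eq}\left(M\right)$ for a model $M$: it forces the definition of $q$ to be over $M$, and since it must agree with $p$ on parameters from $M$, rigidity of the canonical parameter gives $d_q\varphi=d_p\varphi$ and hence $q=p|_N$.

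The algebraically closed version follows by the same template: when $A$ is algebraically closed and the theory eliminates imaginaries, one still has $\acl^{\eq}\left(A\right)=\operatorname{dcl}^{\eq}\left(A\right)$, which is exactly the input the uniqueness argument needs. The main obstacle is therefore concentrated entirely in uniqueness, and more precisely in showing that a non-forking extension is definable over $M$ (resp.\ $A$) itself rather than merely over $\acl^{\eq}$; this is the content of the identity $\acl^{\eq}\left(M\right)=\operatorname{dcl}^{\eq}\left(M\right)$, the source of stationarity over models. One could alternatively route it through the finite equivalence relation theorem, which collapses the finitely many non-forking extensions to one once there are no nontrivial finite equivalence relations over $M$. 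Everything else is standard local stability theory as developed in \cite{MR1429864}.
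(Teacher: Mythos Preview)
The paper does not give its own proof of this statement: it is recorded as a \emph{Fact} with a citation to \cite[Section 2]{MR1429864} and is used as a black box in the subsequent arguments. So there is nothing in the paper to compare your proposal against line by line.

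That said, your sketch is the standard route from local stability theory and is essentially correct. The one step you pass over a bit quickly is the assertion that for an arbitrary non-forking extension $q$ the canonical parameter of $d_q\varphi$ already lies in $\acl^{\eq}\left(M\right)$. As written this is just stated; the usual justification is that the set of global non-forking extensions of $p$ is bounded (e.g.\ via the local rank you invoke, or multiplicity), hence setwise invariant under $\Aut\left(\C/M\right)$, which forces the canonical parameter into $\acl^{\eq}\left(M\right)$. Your closing remark about the finite equivalence relation theorem is exactly the alternative way to close this gap. Once that is in hand, the identity $\acl^{\eq}\left(M\right)=\operatorname{dcl}^{\eq}\left(M\right)$ for models (and its analogue for algebraically closed $A$ under elimination of imaginaries) does give stationarity as you describe.
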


\begin{rem}
\label{rem:existence of nf} Suppose that $p\in S_{\cal B}^{x}\left(B\right)$
does not fork over $A\subseteq B$. Then there is a global non-forking
(over $A$) extension $q\in S_{\cal B}^{x}\left(\C\right)$ (and the
same is true for local Boolean types). This follows by the fact that
forking formulas over $A$ form an ideal and using Fact \ref{fact:function-extension}.
 When $B$ is a model $M$ then $p$ does not fork over $M$ and
if $T$ is stable (or even simple) then this is true in general. 

Suppose that $\varphi\left(x,y\right)$ is stable and $p\in S_{\cal B,\varphi}\left(M\right)$
for some model $M\models T$. Then we can find an explicit extension:
by Theorem \ref{thm:avg of types}, we can write $p$ as the sum $\sum_{r\in U}\mathfrak{b}_{r}\cdot r$
for some maximal antichain $U\subseteq\supp_{\varphi}\left(p\right)$
and we let $q=\sum_{r\in U}\mathfrak{b}_{r}\cdot r|_{\C}$ (where
$r|_{\C}$ is the unique global non-forking extension of $r$). A
similar statement holds in the t.t. case. 
\end{rem}

Next we would like to prove that there is a unique non-forking extension. 
\begin{lem}
\label{lem:greater than support nf}Suppose that $\varphi\left(x,y\right)$
is stable and $\cal B$ is any complete Boolean algebra. Let $M\prec N\models T$
and let $q\in S_{\cal B,\varphi}\left(N\right)$ be non-forking over
$M$. Let $X=\supp_{\varphi}\left(q|_{M}\right)$ ($q|_{M}$ is the
restriction of $q$ to $L_{\varphi,x}\left(M\right)$). Let $U$ be
the set of all isolated $\varphi$-types $r\in X$. For each $r\in U$,
let $\theta_{r}\left(x\right)$ be an isolating formula for $r$ (so
it is a $\varphi$-formula). Then 
\begin{itemize}
\item For any $\psi\left(x\right)\in L_{\varphi,x}\left(N\right)$, $q\left(\psi\right)\geq\sum_{r\in U}q\left(\theta_{r}\right)\cdot r|_{N}\left(\psi\right)$
(where $r|_{N}$ is the unique non-forking extension of $r$ to $N$).
\end{itemize}
\end{lem}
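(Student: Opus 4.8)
The plan is to mimic the proof of Lemma~\ref{lem:greater than support}(2), now carried out in the local $\varphi$-setting and using the non-forking hypothesis on $q$ in place of mere membership in the support.

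First I would reduce the stated inequality to a claim about a single formula. Since each $r|_{N}$ is a complete $\varphi$-type, $r|_{N}\left(\psi\right)\in\left\{ 0,1\right\} $, so the right-hand side equals the supremum $\sum_{r\in U,\,\psi\in r|_{N}}q\left(\theta_{r}\right)$. As a supremum is a least upper bound (and $\cal B$ is complete), the inequality $q\left(\psi\right)\geq\sum_{r\in U}q\left(\theta_{r}\right)\cdot r|_{N}\left(\psi\right)$ holds as soon as $q\left(\theta_{r}\right)\leq q\left(\psi\right)$ for every $r\in U$ with $\psi\in r|_{N}$; in particular, no antichain property is needed at this point.

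So I fix such an $r$ and aim to show $q\left(\theta_{r}\wedge\neg\psi\right)=0$, which gives $q\left(\theta_{r}\right)=q\left(\theta_{r}\wedge\psi\right)\leq q\left(\psi\right)$. Suppose not, so that $q\left(\theta_{r}\wedge\neg\psi\right)>0$. The argument in the remark following Definition~\ref{def:support-1} applies verbatim to $\supp_{\varphi}$, so there is some $s\in\supp_{\varphi}\left(q\right)$ with $s\vdash\theta_{r}\wedge\neg\psi$. Its restriction $s|_{M}$ lies in $\supp_{\varphi}\left(q|_{M}\right)=X$, since any $\varphi$-formula over $M$ implied by $s|_{M}$ is implied by $s$ and hence has positive $q$-value; and as $s|_{M}\vdash\theta_{r}$ while $\theta_{r}$ isolates $r$ inside $X$, I conclude $s|_{M}=r$.

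The crux is to show that $s$ does not fork over $M$. If it did, some $\varphi$-formula $\chi\in s$ would fork over $M$; but then $s\vdash\chi$ together with $s\in\supp_{\varphi}\left(q\right)$ would give $q\left(\chi\right)>0$, contradicting that $q$ is non-forking over $M$. Hence $s$ is a non-forking extension of $r=s|_{M}$ to $N$, so by stationarity (Fact~\ref{fact:stationarity}) $s=r|_{N}$ --- which is absurd, since $s\vdash\neg\psi$ whereas $r|_{N}\vdash\psi$. I expect this transfer of the non-forking hypothesis from the Boolean type $q$ to the supporting $\varphi$-type $s$ to be the only substantive step; everything else is the same bookkeeping as in the unforked case.
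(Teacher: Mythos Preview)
Your proof is correct and follows essentially the same route as the paper: reduce to showing $q(\theta_r\wedge\neg\psi)=0$ whenever $\psi\in r|_N$, pick a type $s\in\supp_\varphi(q)$ containing $\theta_r\wedge\neg\psi$, use the non-forking assumption on $q$ to see that $s$ does not fork over $M$, and conclude $s=r|_N$ by stationarity. You spell out a couple of points (why $s|_M\in X$, and exactly how non-forking transfers from $q$ to $s$) that the paper leaves implicit, but the argument is the same.
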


\begin{proof}
It is enough to show that $q\left(\psi\right)\geq q\left(\theta_{r}\right)$
when $\psi\in r|_{N}$. Suppose that $\psi\left(x\right)\in r|_{N}$
but $q\left(\theta_{r}\land\neg\psi\right)>0$. Then there is some
$r'\in\supp_{\varphi}\left(q\right)$ such that $r'$ contains $\theta_{r}\land\neg\psi$.
But then $r'$ does not fork over $M$ (because $q$ does not fork
over $M$) and $r'|_{M}$ contains $\theta_{r}$ and is in $X$ and
thus $r'|_{M}=r|_{M}$ and by Fact \ref{fact:stationarity}, $r'=r$.
\end{proof}
\begin{thm}
\label{thm:stationary bool types}Suppose that $\varphi\left(x,y\right)$
is stable. Suppose that $n<\omega$. Then, whenever $\cal B$ is a
complete Boolean algebra, $M\prec N$ and $q\in S_{\cal B,\varphi}\left(N\right)$
does not fork over $M$,  there is a maximal antichain $\sequence{\mathfrak{b}_{r}}{r\in U}$
where $U\subseteq\supp_{\varphi}\left(q|_{M}\right)$ which depends
only on $q|_{M}$ such that for all $\psi\left(x\right)\in L_{\varphi,x}\left(N\right)$,
$q\left(\psi\right)=\sum_{r\in U}\mathfrak{b}_{r}\cdot r|_{N}\left(\psi\right)$.
In particular, $q$ is the unique non-forking extension of $q|_{M}$.
\end{thm}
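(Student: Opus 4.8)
The plan is to rerun the Cantor--Bendixson induction from the proof of Theorem \ref{thm:avg of types} (equivalently Theorem \ref{thm:w-stable avg}), but anchored entirely on the restriction $q|_{M}$, and to substitute Lemma \ref{lem:greater than support nf} for Lemma \ref{lem:greater than support} so that each isolated $\varphi$-type over $M$ is replaced by its canonical non-forking extension to $N$. Concretely, I would build a sequence $\sequence{\mathfrak{b}_{\alpha},q_{\alpha},U_{\alpha},\bar{\mathfrak{c}}_{\alpha}}{\alpha<\alpha^{*}}$ exactly as there: set $\mathfrak{b}_{0}=1$, $q_{0}=q$, and at each step take $\mathfrak{b}_{\alpha}=-\sum_{\beta<\alpha,r\in U_{\beta}}\mathfrak{c}_{\beta,r}$, let $q_{\alpha}\left(\psi\right)=q\left(\psi\right)\cdot\mathfrak{b}_{\alpha}$ be the corresponding $\cal B|_{\mathfrak{b}_{\alpha}}$-type over $N$, let $U_{\alpha}$ be the isolated $\varphi$-types in $\supp_{\varphi}\left(q_{\alpha}|_{M}\right)\subseteq X:=S_{\varphi}^{x}\left(M\right)$, and put $\mathfrak{c}_{\alpha,r}=q_{\alpha}\left(\theta_{r}\right)$ for $\theta_{r}$ a $\varphi$-formula over $M$ isolating $r$.

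First I would check the two structural facts that drive the induction. Since $q_{\alpha}\left(\theta\right)=q\left(\theta\right)\cdot\mathfrak{b}_{\alpha}\leq q\left(\theta\right)$ for every formula $\theta$, the type $q_{\alpha}$ still does not fork over $M$, so Lemma \ref{lem:greater than support nf} applies and gives $q_{\alpha}\left(\psi\right)\geq\sum_{r\in U_{\alpha}}\mathfrak{c}_{\alpha,r}\cdot r|_{N}\left(\psi\right)$ for every $\psi\in L_{\varphi,x}\left(N\right)$. Next, the supports shrink along the derivative: as in Theorem \ref{thm:w-stable avg}, a direct computation using $\mathfrak{c}_{\alpha,r}\cdot\mathfrak{b}_{\alpha+1}=0$ shows $q_{\alpha+1}\left(\theta_{r}\right)=0$ for each $r\in U_{\alpha}$, so no isolated point of $\supp_{\varphi}\left(q_{\alpha}|_{M}\right)$ survives in $\supp_{\varphi}\left(q_{\alpha+1}|_{M}\right)$; together with Remark \ref{rem:derivative subset} this yields $\supp_{\varphi}\left(q_{\alpha}|_{M}\right)\subseteq X^{\left(\alpha\right)}$ by induction. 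By Fact \ref{fact:stable CB rank} the derivative is empty at some finite stage, so the construction halts at some $\alpha^{*}<\omega$ with $\sum_{\alpha<\alpha^{*},r\in U_{\alpha}}\mathfrak{c}_{\alpha,r}=1$; summing the stagewise inequalities and repeating the argument for $\neg\psi$ turns them into the equality $q\left(\psi\right)=\sum_{r\in U}\mathfrak{b}_{r}\cdot r|_{N}\left(\psi\right)$ over the maximal antichain $\sequence{\mathfrak{b}_{r}}{r\in U}$, where $U=\bigsqcup_{\alpha}U_{\alpha}$ and $\mathfrak{b}_{r}=\mathfrak{c}_{\alpha,r}$.

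The part that makes the statement stronger than Theorem \ref{thm:avg of types}, and which I regard as the crux, is the claim that the antichain $\sequence{\mathfrak{b}_{r}}{r\in U}$ depends only on $q|_{M}$. I would isolate this by observing that every object produced in the recursion is computed from $q|_{M}$ alone: each $\theta_{r}$ is a $\varphi$-formula over $M$, so $\mathfrak{c}_{\alpha,r}=q\left(\theta_{r}\right)\cdot\mathfrak{b}_{\alpha}=q|_{M}\left(\theta_{r}\right)\cdot\mathfrak{b}_{\alpha}$, and $\mathfrak{b}_{\alpha}$ is defined from the earlier $\mathfrak{c}$'s; likewise $\supp_{\varphi}\left(q_{\alpha}|_{M}\right)$, hence $U_{\alpha}$, is determined by the values $q|_{M}\left(\theta\right)\cdot\mathfrak{b}_{\alpha}$ on $\varphi$-formulas $\theta$ over $M$. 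By induction on $\alpha$ the entire datum $\sequence{\mathfrak{b}_{\alpha},U_{\alpha},\bar{\mathfrak{c}}_{\alpha}}{\alpha<\alpha^{*}}$ is a function of $q|_{M}$ only. Since the $r|_{N}$ are the canonical (unique) non-forking extensions supplied by Fact \ref{fact:stationarity}, the right-hand side $\sum_{r\in U}\mathfrak{b}_{r}\cdot r|_{N}\left(\psi\right)$ is fully determined by $q|_{M}$ and $N$; hence any two non-forking extensions of $q|_{M}$ to $N$ must coincide with it, giving uniqueness.

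The main obstacle is bookkeeping rather than a new idea: one must confirm that applying Lemma \ref{lem:greater than support nf} at every stage is legitimate, namely that $q_{\alpha}$ is a genuine non-forking $\cal B|_{\mathfrak{b}_{\alpha}},\varphi$-type over $N$ (using that $\cal B|_{\mathfrak{b}_{\alpha}}$ is complete when $\cal B$ is), and that the isolating formulas can be chosen over $M$ so the dependence-only-on-$q|_{M}$ argument goes through cleanly. Everything else is a faithful transcription of the proofs of Theorem \ref{thm:w-stable avg} and Theorem \ref{thm:avg of types}.
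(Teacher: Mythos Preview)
Your proposal is correct and is precisely the approach the paper takes: it says only that the proof ``follows the same lines as in the proof of Theorem \ref{thm:avg of types} (and Theorem \ref{thm:w-stable avg}), using Lemma \ref{lem:greater than support nf} instead of Lemma \ref{lem:greater than support}.'' You have in fact supplied more detail than the paper does, including the explicit verification that the recursion data depend only on $q|_{M}$ and that each $q_{\alpha}$ remains non-forking over $M$; these are exactly the points the paper leaves implicit.
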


\begin{proof}
The proof follows the same lines as in the proof of Theorem \ref{thm:avg of types}
(and Theorem \ref{thm:w-stable avg}), using Lemma \ref{lem:greater than support nf}
instead of Lemma \ref{lem:greater than support}. 
\end{proof}
\begin{rem}
\label{rem:algebraically closed stationary}As in the classical case,
we can extend these results (existence and uniqueness of non-forking
extensions) for an arbitrary algebraically closed set $A$, assuming
elimination of imaginaries. 
\end{rem}

\subsection{Connection to Keisler measures}

Using the general results on Boolean types we can recover and prove
some results on Keisler measures. The following result appeared in
\cite[Fact 1.1]{PillayDomination}, \cite[Fact 2.2]{ArtemKyle} for
models. 
\begin{cor}
\label{cor:measure-sum-of-types}Suppose that $\varphi\left(x,y\right)$
is stable and that $\mu$ is a Keisler measure on $L_{\varphi,x}\left(A\right)$
for some set $A$. Then there is a countable family $\sequence{p_{i}}{i<\omega}$
of complete $\varphi$-types over $A$ and positive real numbers $\sequence{\alpha_{i}}{i<\omega}$
such that $\sum_{i<\omega}\alpha_{i}=1$ and for any $\psi\left(x\right)\in L_{\varphi,x}\left(A\right)$,
$\mu\left(\psi\right)=\sum\alpha_{i}p_{i}\left(\psi\right)$. 

Similarly, if $T$ is t.t. and $\mu$ is a Keisler measure on $L_{x}\left(A\right)$
then there is a countable family $\sequence{p_{i}}{i<\omega}$ of
complete types over $A$ and positive real numbers $\sequence{\alpha_{i}}{i<\omega}$
such that $\sum_{i<\omega}\alpha_{i}=1$ and for any $\psi\left(x\right)\in L_{x}\left(A\right)$,
$\mu\left(\psi\right)=\sum\alpha_{i}p_{i}\left(\psi\right)$. 
\end{cor}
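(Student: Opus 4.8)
The plan is to pass from the measure $\mu$ to a Boolean type valued in a probability algebra, apply the decomposition of Theorem~\ref{thm:avg of types} (respectively Theorem~\ref{thm:w-stable avg}) to that type, and then recover a countable convex combination by exploiting the countable chain condition enjoyed by every probability algebra. The key observation is that the antichain produced by those theorems, being an antichain in a c.c.c.\ algebra, is automatically countable, and that applying the strictly positive $\sigma$-additive measure turns the Boolean sum into an honest convergent series of reals.

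Concretely, for the stable local case I would first attach to $\mu$ a probability algebra. As in Remark~\ref{rem:generation-of-Keisler} (working over the Stone space $S_{\varphi}^{x}\left(A\right)$ of complete $\varphi$-types rather than over $S^{x}\left(A\right)$), regard $\mu$ as a measure on the clopen sets, extend it uniquely to a regular $\sigma$-additive Borel measure, and let $\left(\mathcal{B},\hat{\mu}\right)$ be the probability algebra of Borel subsets of $S_{\varphi}^{x}\left(A\right)$ modulo $\mu$-null sets. Letting $p:L_{\varphi,x}\left(A\right)\to\mathcal{B}$ be the projection sending a $\varphi$-formula $\psi$ to the class of the clopen set it defines, $p$ is a $\mathcal{B},\varphi$-type and $\mu\left(\psi\right)=\hat{\mu}\left(p\left(\psi\right)\right)$ for every $\psi$. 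Since $\mathcal{B}$ is a probability algebra it is complete, so Theorem~\ref{thm:avg of types} applies and yields a maximal antichain $\sequence{\mathfrak{b}_{r}}{r\in U}$ with $U\subseteq\supp_{\varphi}\left(p\right)\subseteq S_{\varphi}^{x}\left(A\right)$ and $p\left(\psi\right)=\sum_{r\in U}\mathfrak{b}_{r}\cdot r\left(\psi\right)$ for all $\psi$.

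It then remains to read off the coefficients. Because $\mathcal{B}$ is c.c.c., the antichain $U$ is countable; enumerate it as $\sequence{p_{i}}{i<\omega}$ and put $\alpha_{i}=\hat{\mu}\left(\mathfrak{b}_{p_{i}}\right)$, which is strictly positive since $\hat{\mu}$ is positive on $\mathcal{B}^{+}$. Maximality of the antichain in the complete algebra $\mathcal{B}$ gives $\sum_{i<\omega}\mathfrak{b}_{p_{i}}=1$, so $\sigma$-additivity of $\hat{\mu}$ on this countable disjoint family gives $\sum_{i<\omega}\alpha_{i}=\hat{\mu}\left(1\right)=1$. Finally, each $p_{i}$ is a complete $\varphi$-type, so $p_{i}\left(\psi\right)\in\left\{ 0,1\right\} $ and hence $p\left(\psi\right)=\sum_{\set{i}{\psi\in p_{i}}}\mathfrak{b}_{p_{i}}$; applying $\hat{\mu}$ once more, $\mu\left(\psi\right)=\hat{\mu}\left(p\left(\psi\right)\right)=\sum_{\set{i}{\psi\in p_{i}}}\alpha_{i}=\sum_{i<\omega}\alpha_{i}p_{i}\left(\psi\right)$, as required. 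The totally transcendental case is handled identically, except that one attaches $p\in S_{\mathcal{U}_{\kappa}}^{x}\left(A\right)$ directly via Proposition~\ref{prop:attaching a general type to a measure} (so $\mu\left(\psi\right)=\nu_{\kappa}\left(p\left(\psi\right)\right)$ for all $\psi\in L_{x}\left(A\right)$, and $\mathcal{U}_{\kappa}$ is again a complete c.c.c.\ probability algebra) and invokes Theorem~\ref{thm:w-stable avg} in place of Theorem~\ref{thm:avg of types}, producing complete types rather than $\varphi$-types.

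I expect no serious obstacle here, since the heavy lifting is done by Theorems~\ref{thm:avg of types} and~\ref{thm:w-stable avg}; the only points demanding care are the three uses of the probability-algebra structure --- that c.c.c.\ forces $U$ countable, that a maximal antichain sums to $1$, and that strict positivity together with $\sigma$-additivity converts the Boolean identity into the real-valued identity $\mu\left(\psi\right)=\sum_{i<\omega}\alpha_{i}p_{i}\left(\psi\right)$ --- none of which is deep but all of which are essential.
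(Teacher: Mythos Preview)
Your proposal is correct and follows essentially the same route as the paper: form the probability algebra of Borel sets of the relevant Stone space modulo $\mu$-null sets, take the canonical $\mathcal{B},\varphi$-type (resp.\ $\mathcal{B}$-type), apply Theorem~\ref{thm:avg of types} (resp.\ Theorem~\ref{thm:w-stable avg}), and use c.c.c.\ to conclude countability of the antichain. The only cosmetic difference is that in the t.t.\ case you pass through $\mathcal{U}_{\kappa}$ via Proposition~\ref{prop:attaching a general type to a measure} rather than working directly with the measure algebra of $S^{x}\left(A\right)$; either choice works, and your explicit verification that $\sum_{i}\alpha_{i}=1$ and $\mu\left(\psi\right)=\sum_{i}\alpha_{i}p_{i}\left(\psi\right)$ via $\sigma$-additivity spells out what the paper leaves to the reader.
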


\begin{proof}
Given $\mu$, let $\cal B$ be the Boolean algebra of Borel subsets
of $S_{\varphi}^{x}\left(A\right)$ up to $\mu$-measure $0$ (recall
Remark \ref{rem:generation-of-Keisler}) and let $q\in S_{\cal B,\varphi}^{x}\left(A\right)$
be the natural homomorphism from $\varphi$-formulas over $A$ (up
to equivalence over $\C$) to $\cal B$. Now apply Theorem \ref{thm:avg of types}
to $q$ and $\cal B$ to obtain a maximal antichain $\set{\mathfrak{b}_{r}}{r\in U}$
where $U\subseteq\supp_{\varphi}\left(q\right)$ such that for all
$\psi\left(x\right)\in L_{x}\left(A\right)$, $q\left(\psi\right)=\sum_{r\in U}\mathfrak{b}_{r}\cdot r\left(\psi\right)$.
Note that $U$ must be countable as $\cal B$ is c.c.c. Letting $\alpha_{r}=\mu\left(\mathfrak{b}_{r}\right)$
we are done. The second statement follows similarly from Theorem \ref{thm:w-stable avg}.
\end{proof}
\begin{defn}
A Keisler measure $\mu$ on $L_{x}\left(N\right)$ does not fork over
$M\prec N$ if whenever $\mu\left(\theta\right)>0$, $\theta$ does
not for over $M$. 
\end{defn}

\begin{cor}
Suppose that $\varphi\left(x,y\right)$ is stable and that $\mu$
is a Keisler measure on $L_{\varphi}\left(M\right)$ for some model
$M$. Then $\mu$ has a unique global non-forking extension to $\C$.
More generally, this holds when replacing $M$ by any algebraically
closed set $A$, assuming elimination of imaginaries. 
\end{cor}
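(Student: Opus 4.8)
The plan is to deduce everything from the atomic decomposition of stable local Keisler measures (Corollary \ref{cor:measure-sum-of-types}) together with stationarity of $\varphi$-types (Fact \ref{fact:stationarity}), rather than re-running the measure-algebra machinery. For existence, apply Corollary \ref{cor:measure-sum-of-types} to write $\mu=\sum_{i<\omega}\alpha_i p_i$ with the $p_i\in S_\varphi^x(M)$ distinct complete $\varphi$-types and $\alpha_i>0$, $\sum_i\alpha_i=1$. Each $p_i$ has a unique global non-forking extension $p_i|_{\C}$ by Fact \ref{fact:stationarity}, so I would set $\tilde\mu=\sum_{i<\omega}\alpha_i\, p_i|_{\C}$; this is again a global Keisler $\varphi$-measure, it restricts to $\mu$ on $L_{\varphi,x}(M)$, and it does not fork over $M$: if $\tilde\mu(\theta)>0$ then $\theta\in p_i|_{\C}$ for some $i$ with $\alpha_i>0$, and $p_i|_{\C}$ does not fork over $M$, so neither does $\theta$.

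For uniqueness it suffices to treat extensions to a small $N\succ M$, since any formula has parameters in such an $N$ and the restriction to $N$ of a global non-forking extension is again a non-forking extension over $N$. Given two non-forking extensions $\mu_1,\mu_2$ of $\mu$ to $N$, I would decompose each by Corollary \ref{cor:measure-sum-of-types}, say $\mu_1=\sum_j\beta_j s_j$ with distinct $s_j\in S_\varphi^x(N)$ and $\beta_j>0$. The key point is that each support type does not fork over $M$: for $\psi\in s_j$ we have $\mu_1(\psi)\ge\beta_j>0$, so $\psi$ does not fork over $M$, whence $s_j$ does not fork over $M$ and, by Fact \ref{fact:stationarity}, $s_j=(s_j|_M)|_N$. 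Restricting $\mu_1$ to $M$ then gives $\mu=\sum_j\beta_j\,(s_j|_M)$, and the $s_j|_M$ are pairwise distinct (two equal restrictions would force equal non-forking extensions).

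Since the atoms and their weights of a measure are intrinsic, this forces $\{(s_j|_M,\beta_j)\}_j=\{(p_i,\alpha_i)\}_i$, hence $\mu_1=\sum_i\alpha_i\,p_i|_N=\tilde\mu|_N$; the same applies to $\mu_2$, so $\mu_1=\mu_2$. The main obstacle is justifying that the atoms of $\mu$ are intrinsically determined, and I would handle this exactly as in Remark \ref{rem:generation-of-Keisler}: pass to the unique regular Borel extension of $\mu$ on $S_\varphi^x(M)$, observe that the atomic measure $\sum_i\alpha_i\delta_{p_i}$ agrees with it on all clopen sets and hence everywhere, so $\mu$ is purely atomic with atom set $\{p_i\}$ and masses $\alpha_i$, both recovered from $\mu$ alone.

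Alternatively one can run the argument through the associated $\mathcal B,\varphi$-type $q$ with $\mu=\nu\circ q$ and invoke the uniqueness of non-forking extensions of Boolean types (Theorem \ref{thm:stationary bool types}), but then one must reconcile the different measure algebras attached to $\mu_1$ and $\mu_2$, which is precisely the technical nuisance the atomic decomposition avoids. Finally, for the last sentence I would replace $M$ throughout by an algebraically closed set $A$: Corollary \ref{cor:measure-sum-of-types} already holds over an arbitrary set, and Fact \ref{fact:stationarity} provides unique non-forking extensions of $\varphi$-types over $\acl$-sets under elimination of imaginaries, so the same argument goes through verbatim.
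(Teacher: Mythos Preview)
Your argument is correct and takes a genuinely different route from the paper's own proof. The paper proceeds entirely through the Boolean-type machinery: it represents $\mu$ as $\nu_{\kappa}\circ p$ for a $\mathcal{U}_{\kappa},\varphi$-type $p$ (a local version of Proposition~\ref{prop:attaching a general type to a measure}), obtains existence from Remark~\ref{rem:existence of nf}, and for uniqueness lifts two non-forking extensions $\lambda_{1},\lambda_{2}$ to $\mathcal{U}_{\kappa^{+}},\varphi$-types $q_{1},q_{2}$, uses a local version of Lemma~\ref{lem:extend-embedding} to arrange that both extend the same $p$, and then invokes Theorem~\ref{thm:stationary bool types}. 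So the paper does exactly the alternative you sketch in your penultimate paragraph, with Lemma~\ref{lem:extend-embedding} supplying the ``reconciliation of measure algebras'' you flag as the technical nuisance. Your approach via Corollary~\ref{cor:measure-sum-of-types} and stationarity of classical $\varphi$-types is more elementary and self-contained; its one extra ingredient is the uniqueness of the atomic decomposition, which you correctly justify by identifying the regular Borel extension of $\mu$ with the purely atomic measure $\sum_{i}\alpha_{i}\delta_{p_{i}}$. The paper's route, by contrast, illustrates that the Boolean-type framework already encodes the result; indeed the explicit formula $\sum_{i}\alpha_{i}\,p_{i}|_{N}$ that drives your argument appears in the paper only as a closing remark after the proof.
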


\begin{proof}
We use a local version of Proposition \ref{prop:attaching a general type to a measure}:
given $\mu$, we can find $p\in S_{\varphi,\cal U_{\kappa}}^{x}\left(M\right)$
such that $\mu=\nu_{\kappa}\circ p$ for $\kappa=\left|T\right|+\left|M\right|$.
By Remark \ref{rem:existence of nf} there is a non-forking extension
$q\in S_{\varphi,\cal U_{\kappa}}^{x}\left(N\right)$ and then we
can define $\mu'=\mu\circ q$. For uniqueness, suppose that $\lambda_{1},\lambda_{2}$
are two non-forking measures over $N\succ M$ extending $\mu$. We
may assume $\left|N\right|=\left|M\right|$ and let $\kappa$ be as
above. Let $q_{1},q_{2}\in S_{\varphi,\cal U_{\kappa^{+}}}^{x}\left(N\right)$
be corresponding $\varphi,\cal U_{\kappa^{+}}$-types. By a local
version of Lemma \ref{lem:extend-embedding} we may assume that both
$q_{1},q_{2}$ extend $p$. Thus we are done by Theorem \ref{thm:stationary bool types}.
The more general statement follows similarly by Remark \ref{rem:algebraically closed stationary}.
\end{proof}
\begin{rem}
Note that in the context of the first part of Corollary \ref{cor:measure-sum-of-types}
where $A$ is a model and $N\supseteq A$, the unique non-forking
extension of $\mu$ to $N$ is the weighted sum $\sum\alpha_{i}p_{i}|_{N}$
where $p_{i}|_{N}$ is the unique non-forking extension of $p_{i}$
to $N$. This follows immediate from the fact that the sum does not
fork. The analogous result holds in the t.t. case. \bibliographystyle{alpha}
\bibliography{0C__Users_Ori_Segel_Dropbox_Math_Project_common2}
\end{rem}

\end{document}